\title
{Real trees}
\date{8 June, 2020; revised 14 March 2023}
\author{Svante Janson}
\thanks{Partly supported by the Knut and Alice Wallenberg Foundation}
\address{Department of Mathematics, Uppsala University, PO Box 480,
SE-751~06 Uppsala, Sweden}
\email{svante.janson@math.uu.se}
\newcommand\urladdrx[1]{{\urladdr{\def~{{\tiny$\sim$}}#1}}}
\subjclass[2010]{} 
\numberwithin{equation}{section}
\renewcommand\le{\leqslant}
\renewcommand\ge{\geqslant}
\theoremstyle{plain}
\newtheorem{theorem}{Theorem}[section]
\newtheorem{lemma}[theorem]{Lemma}
\theoremstyle{definition}
\newtheorem{exampleqqq}[theorem]{Example}
\newenvironment{example}{\begin{exampleqqq}}
  {\hfill\qedsymbol\end{exampleqqq}}
\newtheorem{remarkqqq}[theorem]{Remark}
\newenvironment{remark}{\begin{remarkqqq}}
  {\hfill\qedsymbol\end{remarkqqq}}
\newtheorem{definition}[theorem]{Definition}
\theoremstyle{remark}
\newenvironment{romenumerate}[1][-10pt]{
\addtolength{\leftmargini}{#1}\begin{enumerate}
 }{\end{enumerate}}
\newenvironment{PXenumerate}[1]{
\begin{enumerate}
 }{\end{enumerate}}
\newenvironment{PXaenumerate}[1]{
\begin{enumerate}
 }{\end{enumerate}}
\newcounter{oldenumi}
{\setcounter{oldenumi}{\value{enumi}}
\begin{romenumerate} \setcounter{enumi}{\value{oldenumi}}}
{\end{romenumerate}}
\newcounter{thmenumerate}
\newcounter{xenumerate}   
\newcommand\pfitemx[1]{\par#1:}
\newcommand\pfitemref[1]{\pfitemx{\ref{#1}}}
\newcommand{\refT}[1]{Theorem~\ref{#1}}
\newcommand{\refL}[1]{Lemma~\ref{#1}}
\newcommand{\refLs}[1]{Lemmas~\ref{#1}}
\newcommand{\refR}[1]{Remark~\ref{#1}}
\newcommand{\refS}[1]{Section~\ref{#1}}
\newcommand{\refD}[1]{Definition~\ref{#1}}
\newcommand{\refE}[1]{Example~\ref{#1}}
\newcommand{\refEs}[1]{Examples~\ref{#1}}
\newcommand\REM[1]{{\raggedright\texttt{[#1]}\par\marginal{XXX}}}
\newcommand\XREM[1]{\relax}
\xdef\klockan{\the\count1.0\the\count255}
\xdef\klockan{\the\count1.\the\count255}\fi
\newcommand\nopf{\qed}   
\newcommand\set[1]{\ensuremath{\{#1\}}}
\newcommand\bigset[1]{\ensuremath{\bigl\{#1\bigr\}}}
\newcommand\xpar[1]{(#1)}
\newcommand\bigpar[1]{\bigl(#1\bigr)}
\def\rompar(#1){\textup(#1\textup)}    
\def\xexp(#1){e^{#1}}
\newcommand\ntoo{\ensuremath{{n\to\infty}}}
\newcommand\ktoo{\ensuremath{{k\to\infty}}}
\newcommand\bmin{\wedge}
\newcommand\bmax{\vee}
\newcommand\downto{\searrow}
\newcommand\upto{\nearrow}
\newcommand\punkt{\xperiod}    
\newcommand\ie{i.e\punkt}
\newcommand\eg{e.g\punkt}
\newcommand\cf{cf\punkt}
\newcommand\bbR{\mathbb R}
\newcommand\bbT{\mathbb T}
\newcounter{CC}
\newcounter{cc}
\newcommand\ga{\alpha}
\newcommand\gd{\delta}
\newcommand\gD{\Delta}
\newcommand\gf{\varphi}
\newcommand\gam{\gamma}
\newcommand\gl{\lambda}
\newcommand\gs{\sigma}
\newcommand\eps{\varepsilon}
\renewcommand\phi{\xxx}  
\newcommand\cA{\mathcal A}
\newcommand\cB{\mathcal B}
\newcommand\cD{\mathcal D}
\newcommand\cH{\mathcal H}
\newcommand\tT{{\tilde T}}
\newcommand\qw{^{-1}}
\newcommand\qq{^{1/2}}
\newcommand\oi{\ensuremath{[0,1]}}
\newcommand\ooio{(0,1)}
\newcommand\ooo{[0,\infty)}
\newcommand\setoi{\set{0,1}}
\newcommand\lhs{left-hand side}
\newcommand\xoo{_1^\infty}
\newcommand\dxx[2]{d_{#1,#2}}
\newcommand\dxy{\dxx xy}
\newcommand\dxz{\dxx xz}
\newcommand\dxw{\dxx xw}
\newcommand\dyz{\dxx yz}
\newcommand\oxy{[0,\dxy]}
\newcommand\gfxy{\gf_{x,y}}
\newcommand\refTT{\ref{T2first}--\ref{T2last}}
\newcommand\Tq[1]{T\setminus\set{#1}}
\newcommand\Tz{\Tq{z}}
\renewcommand\deg[1]{\gd(#1)}
\newcommand\degg[2]{\gd_{#1}(#2)}
\newcommand\hzeta{h}
\newcommand\ddd{\gD}
\newcommand\hT{\widehat T}
\newcommand\FPC{four-point inequality}
\newcommand\skel{^o}
\newcommand\leaves{^{\mathsf{L}}}
\newcommand\hmi{\cH^1}
\newcommand\hmx[1]{\cH^{#1}}
\newcommand\roott{\emptyset}
\newcommand\LL{L^+}
\newcommand\setoioo{\setoi^\infty}
\newcommand\gsf{$\gs$-field}
\newcommand\Tsub{S}
\newcommand\oell{[0,\ell]}
\newcommand\bex{\mathbf{e}}
\newcommand{\Levy}{L\'evy}
\begin{document}

\begin{abstract} 
We survey the definition and some elementary properties of real trees.
There are no new results, as far as we know.
One purpose is to give a number of different definitions and show the
equivalence between them. 
We discuss also, for example,
the four-point inequality, the length measure
and the connection to the theory of Gromov hyperbolic spaces.
Several examples are given.
\end{abstract}

\maketitle

\section{Introduction}\label{S:intro}

This is a survey of various equivalent definitions of real trees and some
properties of them, mostly with  proofs.
We do not think that there are any new results.
Most of the paper considers deterministic real trees, but we include also
some brief comments on random real trees.
For further results, see for example \cite{Dress}, \cite{Evans},
\cite{LeGall2006}, and the references there. 

\subsection{Definition}
There are several different but equivalent definitions of real trees
(also called {$\bbR$-trees}).
We collect several of them as follows. 
We define below conditions \ref{T1} and \refTT{} on
a metric space $(T,d)$; we will show that 
assuming \ref{T1}, the conditions \refTT{} are all equivalent.
We then make the following definition.  (Which we state already here,
although it is not yet justified.)

\begin{definition}
  A \emph{real tree} (or \emph{$\bbR$-tree})
is a non-empty metric space $T=(T,d)$ 
that satisfies condition \ref{T1} and one (and thus all) of \refTT.
\end{definition}

\begin{remark}
Some authors assume also that the metric space $T$ is complete.
We will not do so. See further \refR{Rcomplete} below.
Note also that in many applications, $T$ is assumed to be compact;
again we do not assume this.
\end{remark}

Another equivalent, and related, definition is given in 
\cite[Definition 3.15]{Evans}. 
A characterization of a different kind of real trees is given in \refT{T4}.

\subsection{Some notation}

Throughout, $T=(T,d)$ is a (non-empty) metric space.
We often write $\dxy$ for $d(x,y)$.

$B(x,r):=\set{y:d(x,y)<r}$ denotes the open ball with centre $x\in T$ and
radius $r>0$.

If $\psi_1:[0,a]\to T$ and $\psi_2:[0,b]\to T$ are continuous maps with
$\psi_1(a)=\psi_2(0)$, their \emph{concatenation}
$\psi_1*\psi_2:[0,a+b]\to T$ is  defined by
\begin{align}\label{concat}
  \psi_1*\psi_2(t):=
  \begin{cases}
    \psi_1(t),& 0\le t\le a,
\\
\psi_2(t-a),& a \le t \le a+b.
  \end{cases}
\end{align}
The concatenation is clearly a continuous map $[0,a+b]\to T$.

If $s,t\in\bbR$, then $s\bmin t:=\min\set{s,t}$
and $s\bmax t:=\max\set{s,t}$. (These operations have priority over addition
and subtraction.)

\section{The conditions}\label{Scond}

In this section we state the conditions on a metric space $T=(T,d)$, 
beginning with the central \ref{T1}.

\begin{PXenumerate}{T}
  
\item \label{T1}
For any $x,y\in T$, there exists a unique isometric embedding
$\gfxy$ of the closed interval $\oxy\subset\bbR$ into $T$ such that
$\gfxy(0)=x$ and $\gfxy(\dxy)=y$. 
\end{PXenumerate}

Assume that \ref{T1} holds.
We then
denote the image $\gf_{x,y}(\oxy)\subseteq T$ by $[x,y]$; thus $[x,y]$ is
a connected compact subset of $T$, homeomorphic with $\oi$ if $x\neq y$.
We similarly define $[x,y)=[x,y]\setminus\set y$,
$(x,y]=[x,y]\setminus\set x$,
$(x,y)=[x,y]\setminus\set{x,y}$.
(If $x=y$, then $[x,x]=\set x$, 
and $[x,x)=(x,x]=(x,x)=\emptyset$.)

Obviously, $\gf_{y,x}(t)=\gfxy\xpar{\dxy-t}$ and $[y,x]=[x,y]$.

Furthermore, still assuming \ref{T1}, let $x,y,z\in T$. Since $\gfxy$ and
$\gf_{x,z}$ are isometries,
\begin{align}\label{xyz}
  [x,y]\cap[x,z]
=\bigset{\gfxy(t):t\in[0,\dxy\bmin\dxz],\,\gfxy(t)=\gf_{x,z}(t)}.
\end{align}
We define, noting that the maximum exists (\ie, the supremum is attained)
by continuity,
\begin{align}\label{ddd}
  \ddd(x,y,z)&:=
\max\bigset{t\in[0,\dxy\bmin\dxz]:\gfxy(t)=\gf_{x,z}(t)},
\\\label{gam}
\gam(x,y,z)&:=\gfxy(\ddd(x,y,z))
=\gf_{x,z}(\ddd(x,y,z))
\in[x,y]\cap[x,z].
\end{align}

Further properties of these objects are given in \refS{ST1}.

We turn to the conditions \refTT. 
These are stated for a metric space $T=(T,d)$ such that \ref{T1} holds, so
we can use the notations just introduced.

\begin{PXaenumerate}{T2}
  
\item \label{T2first}\label{T2disco}
For any $x,y\in T$ and any $z\in(x,y)$, $x$ and $y$ are in
different components of $T\setminus\set z$. 

\item \label{T2Y}
For any $x,y,z\in T$,
$[y,z]\subseteq [x,y]\cup[x,z]$.

\item \label{T2tri}
For any $x,y,z\in T$,
$[x,y]\cap[x,z]\cap[y,z]\neq\emptyset$.

\item \label{T2ddd}
For any $x,y,z\in T$,
$\gam(x,y,z)\in [y,z]$.

\item \label{T2dist} 
For any injective continuous map $\psi:\oi\to T$, 
\begin{align}
  d\bigpar{\psi(0),\psi(t)} +   d\bigpar{\psi(t),\psi(1)}
=   d\bigpar{\psi(0),\psi(1)},
\qquad t\in\oi.
\end{align}

\item \label{T2inj} 
For any injective continuous map $\psi:\oi\to T$, 
$\psi(\oi)\subseteq [\psi(0),\psi(1)]$.

\item \label{T2inj=} 
For any injective continuous map $\psi:\oi\to T$, 
$\psi(\oi)= [\psi(0),\psi(1)]$.

\item \label{T2=} 
Any injective continuous map $\psi:\oi\to T$ 
equals $\gfxy$ up to parametrization, 
where $x=\psi(0)$ and $y=\psi(1)$;
\ie,
$\psi=\gfxy\circ h$ for some strictly increasing homeomorphism $\oi\to\oxy$.

\item \label{T2injsurj}
For any injective continuous map $\psi:\oi\to T$, 
$\psi(\oi)\supseteq [\psi(0),\psi(1)]$.

\item \label{T2surj}
For any continuous map $\psi:\oi\to T$, 
$\psi(\oi)\supseteq [\psi(0),\psi(1)]$.

\label{T2last} 
\end{PXaenumerate}

As said in the introduction, we have the following equivalences.
\begin{theorem}\label{T=}
  Assume that $T=(T,d)$ is a metric space such that \ref{T1} holds.
Then
\refTT{} are all equivalent.
\end{theorem}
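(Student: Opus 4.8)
The plan is to establish the equivalences by proving a cycle (or several short implications) among the eleven conditions, exploiting the rigidity already built into \ref{T1}: the existence and uniqueness of the isometric segment $\gfxy$ and the basic properties of $\ddd(x,y,z)$ and $\gam(x,y,z)$ recorded in \eqref{xyz}--\eqref{gam}. The natural backbone is to route everything through the "$Y$-condition" \ref{T2Y} (equivalently \ref{T2tri} and \ref{T2ddd}, which I would show are immediate reformulations of each other using $\gam$), since these are the purely combinatorial/metric statements about triples, while the remaining seven conditions are statements about injective (or arbitrary) continuous maps $\psi:\oi\to T$. So I would first dispatch the easy equivalences \ref{T2Y} $\Leftrightarrow$ \ref{T2tri} $\Leftrightarrow$ \ref{T2ddd} directly from the definitions of $\ddd,\gam$ and from the fact that $[y,z]$, being the image of an isometry, meets $[x,y]\cup[x,z]$ in exactly the initial and terminal sub-segments emanating from $y$ and from $z$.

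\emph{Second,} I would handle the implications among the $\psi$-conditions, where the logical skeleton is roughly
\[
  \ref{T2=}\ \Rightarrow\ \ref{T2inj=}\ \Rightarrow\ \ref{T2inj},\ \ref{T2injsurj};\qquad
  \ref{T2inj=}\ \Rightarrow\ \ref{T2dist};\qquad
  \ref{T2surj}\ \Rightarrow\ \ref{T2injsurj}.
\]
The content-free ones (\ref{T2inj=} is literally \ref{T2inj} and \ref{T2injsurj} together; \ref{T2dist} follows from \ref{T2inj} since a point on the segment $[\psi(0),\psi(1)]$ splits the distance additively) I would note in a line. For \ref{T2=} $\Rightarrow$ \ref{T2inj=} one observes that a reparametrization of $\gfxy$ has image exactly $[x,y]$. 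The reverse directions \ref{T2inj} $\Rightarrow$ \ref{T2=}: given an injective continuous $\psi$ with image inside $[\psi(0),\psi(1)]=\gfxy(\oxy)$, the map $h:=\gfxy\qw\circ\psi:\oi\to\oxy$ is a continuous injection of an interval, hence strictly monotone; injectivity forces it onto $\{0,\dxy\}$-endpoints-preserving, hence a homeomorphism, and monotonicity must be increasing since $\psi(0)=x$, $\psi(1)=y$. The slightly more delicate point is deducing \ref{T2injsurj} (or \ref{T2surj}) from the triple conditions: given a continuous $\psi$ and a point $p=\gfxy(s)\in[\psi(0),\psi(1)]$ with $x=\psi(0)$, $y=\psi(1)$, one wants $p\in\psi(\oi)$; I would argue by a connectedness/intermediate-value argument on the continuous function $t\mapsto d\bigpar{\psi(0),\gam(\psi(0),\psi(1),\psi(t))}$, or equivalently track the "closest point of $[x,y]$ to $\psi(t)$" and show it sweeps continuously from $x$ to $y$ — this uses \ref{T2Y}/\ref{T2ddd} to guarantee that $\gam(x,y,\psi(t))$ genuinely lies on $[x,y]$ so that the nearest-point projection onto $[x,y]$ is well defined and continuous.

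\emph{Third,} I would close the loop by proving some $\psi$-condition implies a triple condition, e.g.\ \ref{T2dist} $\Rightarrow$ \ref{T2disco} $\Rightarrow$ \ref{T2Y}, or directly \ref{T2inj} $\Rightarrow$ \ref{T2Y}. For \ref{T2disco}: if $z\in(x,y)$ and $x,y$ lay in the same component of $T\setminus\set z$, that component is path-connected-ish enough — actually one must be careful, components need not be path components — so I expect the cleaner route is to note that any continuous path from $x$ to $y$ avoiding $z$, concatenated appropriately, contradicts the additivity \ref{T2dist} (the segment $[x,y]$ passes through $z$, and \ref{T2dist} would force any injective path, in particular a simple sub-path extracted from the given path, to also give additive distances through a point that "should" be $z$). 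Then \ref{T2disco} $\Rightarrow$ \ref{T2Y}: take $w\in[y,z]$; if $w\notin[x,y]\cup[x,z]$, remove a suitable point of $[x,w]$ to get a separation contradicting connectivity of one of the segments. \textbf{The main obstacle} I anticipate is exactly the passage between the "topological" conditions phrased via components of $T\setminus\set z$ or via surjectivity of arbitrary continuous $\psi$, and the "metric" triple conditions — because connectedness is weaker than path-connectedness, one cannot naively extract paths, so the argument for \ref{T2surj} and \ref{T2disco} must lean on the segment structure from \ref{T1} (nearest-point projections onto segments, and the function $\ddd$) rather than on hand-built paths. Once that projection machinery is set up cleanly, the remaining implications are short.
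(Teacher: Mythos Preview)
Your overall architecture is sensible and close to the paper's, but as stated the implication diagram does not close. You establish that the ``triple'' cluster $\{\ref{T2Y},\ref{T2tri},\ref{T2ddd}\}$ and the ``injective-$\psi$'' cluster $\{\ref{T2=},\ref{T2inj=},\ref{T2inj},\ref{T2dist}\}$ are each internally equivalent, that the triple cluster implies \ref{T2surj}$\implies$\ref{T2injsurj}, and that the injective-$\psi$ cluster implies \ref{T2disco}$\implies$\ref{T2Y}. But nothing in your plan leads \emph{out} of \ref{T2surj} or \ref{T2injsurj}, and nothing leads from the triple cluster into the injective-$\psi$ cluster. So \ref{T2surj} and \ref{T2injsurj} are sinks, and you never obtain (triple)$\implies$\ref{T2inj}. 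The paper closes this with two steps you omit: \ref{T2surj}$\implies$\ref{T2Y} (apply \ref{T2surj} to the concatenation $\gf_{y,x}*\gf_{x,z}$, which is a one-line argument), and the non-trivial \ref{T2injsurj}$\implies$\ref{T2inj} (contrapositive: if some injective $\psi$ leaves $[x,y]$, take the maximal open subinterval on which $\psi$ stays off $[x,y]$; the restriction to its closure is an injective path between two points of $[x,y]$ whose image meets $[z,w]\subseteq[x,y]$ only at the endpoints, violating \ref{T2injsurj}). Without at least the second of these you cannot recover \ref{T2inj} from the surjectivity-type conditions.

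A second issue: your IVT argument for (triple)$\implies$\ref{T2surj} is not yet a proof. Knowing that $t\mapsto\ddd(x,y,\psi(t))$ sweeps continuously from $0$ to $\dxy$ only tells you that the \emph{projection} $\gam(x,y,\psi(t))$ hits every $p\in[x,y]$, not that $\psi(t)$ itself equals $p$. The missing ingredient is that $\ddd(x,y,\cdot)$ is locally constant near any $w\notin[x,y]$ (this is \refL{L5}\ref{L5b} in the paper), so that if $p\notin\psi(\oi)$ then the level set $\set{t:\ddd(x,y,\psi(t))=\dxx xp}$ is open as well, giving a clopen partition of $\oi$ into three nonempty pieces. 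This is exactly the paper's \ref{T2ddd}$\implies$\ref{T2disco} argument, just restricted to the image of $\psi$; once you see this, it is cleaner to prove \ref{T2ddd}$\implies$\ref{T2disco} directly (as the paper does) and then get \ref{T2surj} for free, since any path from $x$ to $y$ missing $z\in(x,y)$ would place $x,y$ in the same component of $T\setminus\set z$.
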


The proof is given in \refS{SpfT=}.

\begin{remark}\label{Rnot}
  Condition \ref{T1} alone is not sufficient. 
Examples of spaces satisfying \ref{T1} without being real trees
are the Euclidean space
$\bbR^d$, $d\ge2$, and any convex subset of $\bbR^d$ of dimension $\ge2$;
for example the unit disc.
\end{remark}

\section{Consequences of \ref{T1}}\label{ST1}

In this section we assume \ref{T1} (and sometimes further conditions), 
and show some lemmas used in the proof
of \refT{T=}.

\begin{lemma}\label{L0}
Suppose that \ref{T1} holds. Then
$T$ is connected, pathwise connected, and  locally pathwise connected.
Hence, if $V\subset T$ is an open subset of $T$, then $V$ is a union of
open (pathwise) connected components.
\end{lemma}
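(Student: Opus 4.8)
The plan is to exploit condition \ref{T1} to produce explicit paths. For any $x,y\in T$, the isometric embedding $\gfxy\colon\oxy\to T$ is in particular a continuous map with $\gfxy(0)=x$ and $\gfxy(\dxy)=y$, hence a path from $x$ to $y$. Thus $T$ is pathwise connected, and therefore connected.

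For local pathwise connectedness I would show that every open ball $B(x,r)$ is pathwise connected; this suffices, since the balls $B(x,r)$, $r>0$, form a neighbourhood basis at $x$ in the metric space $T$. The key observation is that if $y\in B(x,r)$ then $[x,y]\subseteq B(x,r)$: as $\gfxy$ is an isometry, $d\bigpar{x,\gfxy(t)}=t\le\dxy<r$ for every $t\in\oxy$. Consequently, given any $y,y'\in B(x,r)$, the concatenation $\gf_{y,x}*\gf_{x,y'}$ (recall $\gf_{y,x}(t)=\gfxy(\dxy-t)$) is a path inside $B(x,r)$ joining $y$ to $y'$, so $B(x,r)$ is pathwise connected. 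Hence $T$ is locally pathwise connected (and in particular locally connected).

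For the final assertion, let $V\subseteq T$ be open and partition $V$ into its pathwise connected components. Each component $C$ is open: if $x\in C$, choose $r>0$ with $B(x,r)\subseteq V$; by the previous paragraph $B(x,r)$ is pathwise connected, so $B(x,r)\subseteq C$. Since these components are open, pairwise disjoint, cover $V$, and are each connected, a standard connectedness argument (a connected subset of $V$ cannot be split by a partition into nonempty open sets) shows they coincide with the connected components of $V$. Therefore $V$ is the disjoint union of its open (pathwise) connected components.

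The argument is essentially routine, so there is no real obstacle; the only point requiring a little care is the last step, namely checking that the open pathwise connected components are genuinely the connected components, which is where local (path) connectedness is used.
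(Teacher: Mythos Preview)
Your argument is correct and follows essentially the same approach as the paper: use $\gfxy$ as a path to get pathwise connectedness, and show each ball $B(x,r)$ is pathwise connected via $[x,y]\subseteq B(x,r)$. The paper leaves the ``Hence'' clause as an immediate consequence of local pathwise connectedness, whereas you spell out the verification that the pathwise connected components of an open set are open and coincide with the connected components; this extra detail is fine but not something the paper bothers with.
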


\begin{proof}
$T$ is obviously pathwise connected by \ref{T1}. Thus, $T$ is connected.

Furthermore,
$T$ is locally pathwise connected, since every open ball
$B(x,r)$  
is pathwise connected. 
(Every $y\in B(x,r)$ is connected to $x$ by the path $[x,y]\subseteq B(x,r)$.)
\end{proof}

In particular, if $z\in T$, then 
the components of $T\setminus\set z$ are open and pathwise connected.
These (path) components are called the \emph{branches} at $z$; see also
\refS{Sbranch}.

\begin{lemma}\label{L2}
  Suppose that \ref{T1} holds.
If $x,y\in T$ and $z,w\in[x,y]$, then $[z,w]\subseteq[x,y]$,
and, furthermore, 
\begin{align}\label{kia}
  \gf_{z,w}(t):=\gfxy(\dxz+t), \qquad 0\le t\le\dxx zw.
\end{align}

\end{lemma}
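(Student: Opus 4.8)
The plan is to exploit the uniqueness clause in \ref{T1}: any isometric embedding of a closed real interval into $T$ sending the endpoints to a given pair of points must coincide with $\gf$ for that pair. So the strategy is to produce an isometric embedding of $[0,\dxx zw]$ into $T$ with the right endpoints, exhibit it explicitly, and then invoke uniqueness to identify it with $\gf_{z,w}$; the claim $[z,w]\subseteq[x,y]$ then follows because the image of that embedding is visibly a subset of $\gfxy(\oxy)=[x,y]$.

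First I would set up coordinates along $[x,y]$. Since $\gfxy$ is an isometric embedding and $z,w\in[x,y]$, there are unique parameters $s,u\in\oxy$ with $\gfxy(s)=z$ and $\gfxy(u)=w$; moreover $s=d(x,z)=\dxz$ and $u=\dxw$, because $\gfxy$ is an isometry with $\gfxy(0)=x$. Without loss of generality assume $s\le u$ (the other case is symmetric, or one appeals to $[z,w]=[w,z]$ and $\gf_{w,z}(t)=\gf_{z,w}(\dxx zw-t)$). Then $u-s=\dxx zw$, since again $\gfxy$ is an isometry: $d(z,w)=\abs{u-s}=u-s$. Now define $\rho:[0,\dxx zw]\to T$ by $\rho(t):=\gfxy(s+t)=\gfxy(\dxz+t)$. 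This is well defined because $s+t$ ranges over $[s,u]\subseteq\oxy$, and it is an isometric embedding as the restriction of the isometric embedding $\gfxy$ to a subinterval, reindexed by a translation. Also $\rho(0)=\gfxy(s)=z$ and $\rho(\dxx zw)=\gfxy(u)=w$.

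At this point the uniqueness in \ref{T1}, applied to the pair $(z,w)$, forces $\rho=\gf_{z,w}$, which is exactly formula \eqref{kia}. Finally, $[z,w]=\gf_{z,w}([0,\dxx zw])=\rho([0,\dxx zw])=\gfxy([s,u])\subseteq\gfxy(\oxy)=[x,y]$, giving the containment.

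I do not anticipate a genuine obstacle here; the lemma is essentially a bookkeeping consequence of \ref{T1}. The one point that needs a little care is the reduction to the case $s\le u$ and the matching of orientations — one must make sure the translated restriction of $\gfxy$ really has $z$ as its initial point and $w$ as its terminal point, rather than the reverse, and handle the opposite orientation via $\gf_{y,x}(t)=\gfxy(\dxy-t)$ and $\gf_{z,w}(t)=\gf_{w,z}(\dxx zw-t)$ as already noted in the text. If one prefers to avoid the case split entirely, one can instead define $\rho(t):=\gfxy\bigl((s\bmin u)+t\bmin(s\bmax u)\bmin\cdots\bigr)$ type formulas, but that is uglier than just saying "by symmetry assume $\dxz\le\dxw$."
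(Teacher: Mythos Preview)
Your proposal is correct and follows essentially the same approach as the paper: reduce by symmetry to $\dxz\le\dxw$, locate $z=\gfxy(\dxz)$ and $w=\gfxy(\dxw)$ using that $\gfxy$ is an isometry, define the translated map $t\mapsto\gfxy(\dxz+t)$, observe it is an isometric embedding with the correct endpoints, and invoke the uniqueness clause of \ref{T1} to identify it with $\gf_{z,w}$. The paper's proof is the same argument in slightly terser form.
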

\begin{proof}
By symmetry, we may assume $\dxz\le  \dxw$.
Since $\gfxy$ is an isometry with $\gfxy(0)=x$, we  have $z=\gfxy(\dxz)$
and $w=\gfxy(\dxw)$; furthermore, $d(z,w)=|\dxz-\dxw|=\dxw-\dxz$.
Let 
\begin{align}
  \gf(t):=\gfxy(\dxz+t), \qquad 0\le t\le \dxw-\dxz=\dxx zw.
\end{align}
Then $\gf$ is an isometry and it follows that $\gf=\gf_{z,w}$. The result
follows. 
\end{proof}

\begin{lemma}
  \label{L1}
Suppose that \ref{T1} holds.
Then, for any $x,y\in T$,
\begin{align}
[  x,y] = \set{z:d(x,z)+d(z,y)=d(x,y)}.
\end{align}
\end{lemma}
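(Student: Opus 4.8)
The plan is to prove the two inclusions separately, using \ref{T1} and, crucially, the uniqueness it asserts. The inclusion $[x,y]\subseteq\set{z:d(x,z)+d(z,y)=d(x,y)}$ is immediate: if $z\in[x,y]$ then $z=\gfxy(t)$ for some $t\in\oxy$, and since $\gfxy$ is an isometric embedding with $\gfxy(0)=x$ and $\gfxy(\dxy)=y$, we get $d(x,z)=t$ and $d(z,y)=\dxy-t$, which sum to $\dxy=d(x,y)$.

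For the reverse inclusion, suppose $d(x,z)+d(z,y)=d(x,y)$, i.e.\ $\dxz+\dxx zy=\dxy$; in particular $\dxz\le\dxy$. The idea is to splice $\gf_{x,z}$ and $\gf_{z,y}$ together into an isometric embedding of $\oxy$ and then invoke uniqueness in \ref{T1}. Since $\gf_{x,z}(\dxz)=z=\gf_{z,y}(0)$, the concatenation
\[
\psi:=\gf_{x,z}*\gf_{z,y}\colon[0,\dxz+\dxx zy]=\oxy\to T
\]
is a well-defined continuous map with $\psi(0)=x$ and $\psi(\dxy)=y$. Granting that $\psi$ is an isometry, \ref{T1} forces $\psi=\gfxy$, and then $z=\psi(\dxz)=\gfxy(\dxz)\in[x,y]$, which finishes the proof.

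The one point that is not completely automatic is therefore that $\psi$ is an isometry. Let $0\le s\le t\le\dxy$. If $s,t\le\dxz$ (or $s,t\ge\dxz$), then $\psi(s),\psi(t)$ both lie on $\gf_{x,z}$ (resp.\ on $\gf_{z,y}$), so $d(\psi(s),\psi(t))=t-s$ by isometry of that embedding. In the mixed case $s\le\dxz\le t$, the triangle inequality through $z$ gives $d(\psi(s),\psi(t))\le d(\psi(s),z)+d(z,\psi(t))=(\dxz-s)+(t-\dxz)=t-s$, while $d(x,y)\le d(x,\psi(s))+d(\psi(s),\psi(t))+d(\psi(t),y)$ together with $d(x,\psi(s))=s$ and $d(\psi(t),y)=\dxy-t$ (again by isometry of $\gf_{x,z}$, $\gf_{z,y}$ and the hypothesis $\dxz+\dxx zy=\dxy$) gives $d(\psi(s),\psi(t))\ge t-s$. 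Hence $d(\psi(s),\psi(t))=\abs{s-t}$ always, so $\psi$ is an isometric embedding and we conclude as above. I do not expect a genuine obstacle here; the only care needed is this mixed case, and the degenerate situations $x=y$, $z=x$, or $z=y$, each of which is trivial (for instance $x=y$ forces $z=x=y\in[x,x]$).
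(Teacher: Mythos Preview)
Your proof is correct and follows essentially the same approach as the paper: both prove the nontrivial inclusion by concatenating $\gf_{x,z}$ and $\gf_{z,y}$, verifying via two triangle inequalities that the concatenation is an isometry, and then invoking the uniqueness clause of \ref{T1}. The only cosmetic difference is that the paper phrases the lower bound for $d(\psi(s),\psi(t))$ as a proof by contradiction, whereas you derive it directly.
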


\begin{proof}
  If $z\in[x,y]$, then by definition $z=\gfxy(s)$ for some $s\in\oxy$.
Since $\gfxy$ is an isometry, we have $d(x,z)=d\bigpar{\gfxy(0),\gfxy(s)}=s$ 
and $d(z,y)=d\bigpar{\gfxy(s),\gfxy(\dxy)}=d(x,y)-s$. Hence,
\begin{align}\label{ll1a}
  d(x,z)+d(z,y)=s + \bigpar{d(x,y)-s} = d(x,y).
\end{align}

Conversely, suppose that $z\in T$ with $d(x,z)+d(z,y)=d(x,y)$. 
Define 
$\gf:\oxy\to T$ as the concatenation $\gf:=\gf_{x,z}*\gf_{z,y}$,
see \eqref{concat}.
Then, 
$\gf$ is  a continuous map $\oxy\to T$ with $\gf(0)=x$ and
$\gf(\dxy)=y$. Furthermore, $\gf$ is an isometry on $[0,\dxz]$ and on
$[\dxz,\dxy]$. It follows that if 
$s\in [0,\dxz]$ and $t\in [\dxz,\dxy]$, then, 
by the triangle inequality,
\begin{align}\label{l1c}
    d\bigpar{\gf(s),\gf(t)}
&\le
  d\bigpar{\gf(s),\gf(\dxz)}
+   d\bigpar{\gf(\dxz),\gf(t)}
\notag\\&
=\bigpar{\dxz-s}+\bigpar{t-\dxz}
=t-s.
\end{align}
On the other hand, if we have strict inequality $d\bigpar{\gf(s),\gf(t)}<t-s$
for some $s,t$ with $0\le s\le t\le \dxy$, then, similarly,
\begin{align}\label{l1d}
  d(x,y) 
&\le d\bigpar{x,\gf(s)}+d\bigpar{\gf(s),\gf(t)}+d\bigpar{\gf(t),y}
\notag\\&
= s+d\bigpar{\gf(s),\gf(t)}+ \dxy-t
\notag\\&
< s + (t-s) + (\dxy-t) = \dxy=d(x,y),
\end{align}
a contradiction.

Consequently, $\gf$ is an isometry, and thus $\gf=\gfxy$, by the uniquness
assumption in \ref{T1}.
Hence, $z=\gf(\dxz)=\gfxy(\dxz)\in[x,y]$.
\end{proof}

\begin{lemma}\label{L22}
  Suppose that \ref{T1} holds.
If $x,y\in T$ and $z,w\in[x,y]$ with $z\in[x,w]$, then $w\in[z,y]$.
\end{lemma}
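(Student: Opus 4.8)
The plan is to reduce everything to the explicit parametrization of subsegments from \refL{L2} together with the characterization of segments from \refL{L1}. Since $z,w\in[x,y]$ and $\gfxy$ is an isometry with $\gfxy(0)=x$, we have $z=\gfxy(\dxz)$ and $w=\gfxy(\dxw)$, and (by \refL{L2}, or directly from $\gfxy$ being an isometry) $d(z,w)=\abs{\dxz-\dxw}$.

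First I would extract from the hypothesis $z\in[x,w]$ that $\dxz\le\dxw$. Indeed, by \refL{L1}, $z\in[x,w]$ means $d(x,z)+d(z,w)=d(x,w)$, i.e. $\dxz+\abs{\dxz-\dxw}=\dxw$; if $\dxz>\dxw$ this would force $2\dxz-\dxw=\dxw$, hence $\dxz=\dxw$, a contradiction. So $\dxz\le\dxw$, and thus $d(z,w)=\dxw-\dxz$. Note also $\dxw\le\dxy$ since $w\in[x,y]$.

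Finally I would check $w\in[z,y]$. Using \refL{L1} once more: from $z=\gfxy(\dxz)$, $w=\gfxy(\dxw)$, $y=\gfxy(\dxy)$ and $\dxz\le\dxw\le\dxy$, the isometry property of $\gfxy$ gives $d(z,w)=\dxw-\dxz$, $d(w,y)=\dxy-\dxw$ and $d(z,y)=\dxy-\dxz$, whence $d(z,w)+d(w,y)=\dxy-\dxz=d(z,y)$, so $w\in[z,y]$. (Alternatively, \refL{L2} gives $\gf_{z,y}(t)=\gfxy(\dxz+t)$ for $0\le t\le\dxy-\dxz$, and the choice $t=\dxw-\dxz\in[0,\dxy-\dxz]$ yields $w=\gfxy(\dxw)=\gf_{z,y}(\dxw-\dxz)\in[z,y]$ at once.) There is essentially no obstacle here; the only step requiring a moment's care is the deduction $\dxz\le\dxw$ from $z\in[x,w]$, which the elementary case analysis above settles.
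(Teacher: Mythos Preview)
Your proof is correct and follows essentially the same approach as the paper: both arguments extract $\dxz\le\dxw$ from $z\in[x,w]$ and then use the explicit parametrization of $[x,y]$ (via \refL{L2} or, equivalently, the \refL{L1} criterion) to place $w$ on $[z,y]$. The only cosmetic difference is that the paper works from the $y$-end (using $\gf_{y,x}$ and $\gf_{y,z}$) while you work from the $x$-end; also, your case analysis for $\dxz\le\dxw$ can be shortened, since $z\in[x,w]$ gives $\dxz+d(z,w)=\dxw$ directly by \refL{L1}, whence $\dxz\le\dxw$.
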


\begin{proof}
Since $z\in[x,w]$, we have $\dxz\le \dxw$. Hence, by \refL{L1},
\begin{align}
  d_{y,w} = \dxy-\dxw \le \dxy-\dxz=d_{y,z}.
\end{align}
Hence, by \refL{L2},
\begin{align}
  w = \gf_{y,x}(d_{y,w})
= \gf_{y,z}(d_{y,w})
\in [y,z].
\end{align}
\end{proof}

\begin{lemma}\label{LgD}
  Suppose that \ref{T1} holds.
Then, for any $x,y,z\in T$,
\begin{align}\label{lgda}
\bigset{t\in[0,\dxy\bmin\dxz]: \gfxy(t)=\gf_{x,z}(t)}=[0,\ddd(x,y,z)]
\end{align}
and
\begin{align}\label{lgdb}
  [x,y]\cap[x,z] = [x,\gam(x,y,z)]
=\bigset{\gfxy(t): t\in [0,\ddd(x,y,z)]}.
\end{align}
\end{lemma}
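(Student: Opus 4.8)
The plan is to prove \eqref{lgda} first, and then derive \eqref{lgdb} from it together with the definition \eqref{gam} of $\gam(x,y,z)$. Write $S:=\set{t\in[0,\dxy\bmin\dxz]:\gfxy(t)=\gf_{x,z}(t)}$ and $\delta:=\ddd(x,y,z)=\max S$ (the maximum exists by continuity, as noted after \eqref{ddd}). The inclusion $[0,\delta]\supseteq S$ is immediate since $\delta=\max S$, so what must be shown is that $S$ is an interval containing $0$, i.e.\ that $t\in S$ whenever $0\le t\le\delta$. Since $0\in S$ trivially (both paths start at $x$), the heart of the matter is: if $s\in S$ then $[0,s]\subseteq S$. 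To see this, fix $s\in S$ and $t\in[0,s]$. The point $\gfxy(s)=\gf_{x,z}(s)$ lies on both $[x,y]$ and $[x,z]$; call it $p$. By \refL{L2} applied on $[x,y]$, the restriction of $\gfxy$ to $[0,s]$ is $\gf_{x,p}$ (reparametrized trivially, since $\dxx xp=s$); likewise, applying \refL{L2} on $[x,z]$, the restriction of $\gf_{x,z}$ to $[0,s]$ is also $\gf_{x,p}$. Hence $\gfxy(t)=\gf_{x,p}(t)=\gf_{x,z}(t)$ for all $t\in[0,s]$, so $[0,s]\subseteq S$. Taking $s=\delta$ gives $[0,\delta]\subseteq S$, and combined with the reverse inclusion this proves \eqref{lgda}.

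For \eqref{lgdb}, start from \eqref{xyz}, which states $[x,y]\cap[x,z]=\set{\gfxy(t):t\in S}$. By \eqref{lgda} this equals $\set{\gfxy(t):t\in[0,\delta]}$, which is the rightmost set in \eqref{lgdb}. But $\set{\gfxy(t):t\in[0,\delta]}=\gfxy([0,\delta])=[x,\gfxy(\delta)]$ by the definition of $[x,w]$ as $\gf_{x,w}$-image applied with $w=\gfxy(\delta)$; here one uses \refL{L2} once more to identify $\gfxy|_{[0,\delta]}$ with $\gf_{x,\gfxy(\delta)}$. Finally $\gfxy(\delta)=\gam(x,y,z)$ by \eqref{gam}, so $\set{\gfxy(t):t\in[0,\delta]}=[x,\gam(x,y,z)]$, which is the middle set in \eqref{lgdb}. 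This chain of equalities establishes \eqref{lgdb}.

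The only mildly delicate point is the middle step of the first paragraph: verifying that the two restrictions genuinely coincide as maps, not merely agree at the single point $s$. This is exactly what \refL{L2} provides — it pins down the parametrized path $\gf_{z,w}$ between two points of a given geodesic segment as a sub-arc of that segment with the obvious parametrization, so there is no freedom left. Everything else is bookkeeping with the notation introduced after \eqref{ddd}. I do not foresee any genuine obstacle; the lemma is essentially an unwinding of \refL{L2} and the definitions of $\ddd$ and $\gam$.
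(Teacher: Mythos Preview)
Your proof is correct and follows essentially the same route as the paper's: both arguments use \refL{L2} to identify the restrictions of $\gfxy$ and $\gf_{x,z}$ to $[0,s]$ with $\gf_{x,p}$ (the paper calls this point $w$), thereby showing the coincidence set is down-closed; \eqref{lgdb} is then read off from \eqref{xyz}, \eqref{gam}, and one more application of \refL{L2}, exactly as you do.
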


\begin{proof}
Let
\begin{align}\label{J}
J:=\bigset{t\in[0,\dxy\bmin\dxz]: \gfxy(t)=\gf_{x,z}(t)}.
\end{align}
Thus, the definition \eqref{ddd} says $\ddd(x,y,z)=\max J$.
If $t\in J$, let $w:=\gfxy(t)\in[x,y]\cap[x,z]$.
Then $[x,w]\subseteq [x,y]\cap [x,z]$ by \refL{L2}.
Hence,  if $0\le s\le t$, then
\begin{align}\label{loss}
  \gfxy(s)=\gf_{x,w}(s)=\gf_{x,z}(s)
\end{align}
and consequently, $s\in J$.
This shows that $J$ is an interval, and 
\eqref{lgda} follows.

Finally, \eqref{lgdb} follows from \eqref{lgda},
using \eqref{xyz}, \eqref{gam} and \eqref{loss}.
\end{proof}

\begin{lemma}
  \label{Lddd}
Suppose that \ref{T1} and \ref{T2ddd} hold.
Then, for any $x,y,z\in T$,
\begin{align}\label{gam3}
  [x,y]\cap[x,z]\cap[y,z]=\set{\gam(x,y,z)}
\end{align}
and
\begin{align}\label{ddd3}
  \ddd(x,y,z)=\tfrac12\bigpar{d(x,y)+d(x,z)-d(y,z)}.
\end{align}
In particular, 
$\ddd$ is a continuous function $T^3\to\bbR$,
and
$\gam(x,y,z)$ is a symmetric function of $x,y,z$.
\end{lemma}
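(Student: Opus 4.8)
The strategy is to first establish \eqref{gam3}, then derive the distance formula \eqref{ddd3} by computing lengths along the segments, and finally read off the two ``in particular'' claims. Write $\gam:=\gam(x,y,z)$ and $\ddd:=\ddd(x,y,z)$ throughout. By \refL{LgD} we already have $[x,y]\cap[x,z]=[x,\gam]$, so $\gam\in[x,y]\cap[x,z]$; and by hypothesis \ref{T2ddd} we have $\gam\in[y,z]$. Hence $\gam\in[x,y]\cap[x,z]\cap[y,z]$, which gives the inclusion $\supseteq$ in \eqref{gam3}.

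For the reverse inclusion in \eqref{gam3}, suppose $w\in[x,y]\cap[x,z]\cap[y,z]$. Since $w\in[x,y]\cap[x,z]=[x,\gam]$ by \refL{LgD}, we get $\dxx xw\le\dxx x\gam=\ddd$ and $w=\gfxy(\dxx xw)$. I want to show $\dxx xw=\ddd$. Because $\gam\in[x,y]$ and $w\in[x,\gam]\subseteq[x,y]$ with $w=\gfxy(\dxx xw)$ and $\gam=\gfxy(\ddd)$, \refL{L2} (or \refL{L22}) gives $\gam\in[w,y]$, so $\dxx wy=\dxx w\gam+\dxx \gam y$. Similarly $\gam\in[w,z]$, so $\dxx wz=\dxx w\gam+\dxx\gam z$. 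Now using $w\in[y,z]$ we have $\dxx yz=\dxx yw+\dxx wz=(\dxx y\gam+\dxx \gam w)+(\dxx \gam w+\dxx \gam z)=\dxx yz+2\dxx \gam w$, where the last equality uses $\gam\in[y,z]$ (from \ref{T2ddd}), so $\dxx yz=\dxx y\gam+\dxx \gam z$. Therefore $\dxx \gam w=0$, i.e.\ $w=\gam$. This proves \eqref{gam3}.

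For \eqref{ddd3}: from $\gam\in[x,y]$ and $\refL{L1}$ we get $\dxx xy=\dxx x\gam+\dxx \gam y=\ddd+\dxx \gam y$; likewise $\dxx xz=\ddd+\dxx \gam z$; and from $\gam\in[y,z]$ (hypothesis \ref{T2ddd}), $\dxx yz=\dxx \gam y+\dxx \gam z$. Adding the first two and subtracting the third yields $\dxx xy+\dxx xz-\dxx yz=2\ddd$, which is \eqref{ddd3}. Continuity of $\ddd$ on $T^3$ is then immediate, since the right side of \eqref{ddd3} is a continuous (indeed $1$-Lipschitz in each variable) function of the pairwise distances, which are continuous. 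Finally, the right side of \eqref{ddd3} is manifestly symmetric in $x$ and $y$ (it equals $\ddd(y,x,z)$'s formula after swapping), and $\gam(x,y,z)=\gfxy(\ddd)$ lies in $[x,y]\cap[x,z]\cap[y,z]$ which is a symmetric condition; combined with \eqref{gam3} saying this triple intersection is the single point $\set{\gam}$, the point $\gam(x,y,z)$ depends symmetrically on $x,y,z$.

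The only mildly delicate step is the reverse inclusion in \eqref{gam3}: one must be careful to invoke \ref{T2ddd} to know $\gam\in[y,z]$ (this is exactly where the extra hypothesis beyond \ref{T1} is used), and then the cancellation argument via \refL{L2}/\refL{L22} and \refL{L1} is routine bookkeeping with the additivity of $d$ along geodesics. Everything else follows mechanically from \refLs{L1} and~\ref{LgD}.
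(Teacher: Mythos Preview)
Your proof is correct. The overall structure matches the paper's: show $\gam\in[x,y]\cap[x,z]\cap[y,z]$ from \eqref{gam} and \ref{T2ddd}, prove uniqueness of any such $w$, and read off \eqref{ddd3} from the three additivity relations coming from \refL{L1}. The only tactical difference is in the uniqueness step: the paper applies \refL{L1} directly to an arbitrary $w$ in the triple intersection to obtain the linear system $\dxw+\dxx yw=\dxy$, $\dxw+\dxx zw=\dxz$, $\dxx yw+\dxx zw=\dyz$, and solves it to see that $\dxw$ (hence $w=\gfxy(\dxw)$) is determined; this yields \eqref{gam3} and \eqref{ddd3} simultaneously. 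You instead first locate $w\in[x,\gam]$ via \refL{LgD}, then push $\gam$ into $[w,y]$ and $[w,z]$ via \refL{L22} to force $d(w,\gam)=0$ by a cancellation, and only afterwards derive \eqref{ddd3}. Both work; the paper's route is slightly more economical since it avoids \refLs{LgD} and \ref{L22} and gets both conclusions in one stroke.
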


\begin{proof}
  By the definition \eqref{gam} and the assumption \ref{T2ddd},
  \begin{align}\label{gambit}
    \gam(x,y,z)\in[x,y]\cap[x,z]\cap[y,z].
  \end{align}
Let $w\in[x,y]\cap[x,z]\cap[y,z]$.
Then, by \refL{L1},
\begin{align}
  \dxw+\dxx yw &= \dxy, \label{gamxy}\\
  \dxw+\dxx zw &= \dxz, \label{gamxz}\\
  \dxx yw+\dxx zw &= \dyz, \label{gamyz}
\end{align}
and consequently
\begin{align}\label{gamman}
 2 \dxw= \dxy+\dxz-\dyz.
\end{align}
Hence, $\dxw$ is uniquely determined by $x,y,z$, and thus so is
$w=\gfxy(\dxw)$. Consequently, \eqref{gambit} implies \eqref{gam3}.
Furthermore, \eqref{gam} implies
$\ddd(x,y,z)=d(x,\gam(x,y,z))$ and thus \eqref{ddd3} follows from
\eqref{gamman}.
\end{proof}

\begin{lemma}\label{L5}
  Suppose that \ref{T1} and \ref{T2ddd} hold,
and let $x,y,z\in T$.
\begin{romenumerate}
  
\item \label{L5a}
Then
\begin{align}\label{l5a}
  z\in[x,y] \iff \ddd(z,x,y)=0.
\end{align}

\item \label{L5b}
If $z\notin[x,y]$ and $d(w,z)<\ddd(z,x,y)$, then
\begin{align}\label{l5b}
  \ddd(x,y,w)=\ddd(x,y,z).
\end{align}
\end{romenumerate}
\end{lemma}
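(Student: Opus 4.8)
The plan for part~\ref{L5a} is short. By \refL{L1}, $z\in[x,y]$ precisely when $d(x,z)+d(z,y)=d(x,y)$, while \eqref{ddd3} of \refL{Lddd} gives $\ddd(z,x,y)=\tfrac12\bigpar{d(z,x)+d(z,y)-d(x,y)}$; the latter vanishes exactly in that case, which is \eqref{l5a}. This step is routine.

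For part~\ref{L5b} the idea is to show that the ``median'' $m:=\gam(x,y,z)$ does not change when $z$ is replaced by $w$, and then to read off \eqref{l5b}. First, by \eqref{gam}, condition \ref{T2ddd}, and the symmetry of $\gam$ (\refL{Lddd}), the point $m$ lies on all three of $[x,y]$, $[x,z]$, $[y,z]$, with $d(x,m)=\ddd(x,y,z)$ and $d(z,m)=\ddd(z,x,y)$; since $z\notin[x,y]$, part~\ref{L5a} yields $d(z,m)=\ddd(z,x,y)>0$. The goal then reduces to proving $m\in[x,w]$ and $m\in[y,w]$: once this holds, $m\in[x,y]\cap[x,w]\cap[y,w]$, which by \eqref{gam3} is the single point $\gam(x,y,w)$, so $m=\gam(x,y,w)$, and \eqref{gam} gives $\ddd(x,y,w)=d\bigpar{x,\gam(x,y,w)}=d(x,m)=\ddd(x,y,z)$, i.e., \eqref{l5b}.

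To prove $m\in[x,w]$ I would introduce the branch point $p:=\gam(z,w,x)$, which by \eqref{gam} and \ref{T2ddd} lies on $[z,x]$ and on $[x,w]$. Combining \eqref{ddd3} with the triangle inequality $d(w,x)\ge d(z,x)-d(z,w)$ gives $d(z,p)=\ddd(z,w,x)\le d(z,w)<d(z,m)$, so along the arc $[z,x]$ the point $p$ sits strictly between $z$ and $m$; hence $m\in[x,p]$ by \refL{L2}, and since $p\in[x,w]$ a second application of \refL{L2} gives $[x,p]\subseteq[x,w]$, so $m\in[x,w]$. Running the identical argument with $x$ replaced by $y$ (using $m\in[y,z]$, again from \ref{T2ddd}, and the branch point $\gam(z,w,y)$ in place of $p$) gives $m\in[y,w]$, which completes the proof.

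I do not expect a real obstacle here: once part~\ref{L5a} and the distance formula \eqref{ddd3} are available, the whole of part~\ref{L5b} is one-dimensional bookkeeping along the segments $[x,z]$ and $[y,z]$ together with a couple of invocations of \refLs{L1} and~\ref{L2}. The only point needing a moment's care is turning the hypothesis $d(w,z)<\ddd(z,x,y)$ into the inequality $d(z,p)<d(z,m)$ that positions the branch point $p$ between $z$ and $m$; this is where the strict inequality in the hypothesis enters, and where condition \ref{T2ddd} (through \refL{Lddd}) is genuinely used.
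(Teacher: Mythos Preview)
Your proof is correct. Part~\ref{L5a} is identical to the paper's. For part~\ref{L5b}, you and the paper in fact introduce the same auxiliary point (your $p=\gam(z,w,x)$ is the paper's $v=\gam(x,z,w)$, by the symmetry of $\gam$ in \refL{Lddd}) and derive the same key inequality $d(z,p)<d(z,m)$ on $[z,x]$ from the hypothesis $d(z,w)<\ddd(z,x,y)$. The only difference is in how the conclusion is extracted: the paper stays at the vertex $x$, uses the parametrizations $\gf_{x,y},\gf_{x,z},\gf_{x,w}$ together with \refL{LgD} to read off $\ddd(x,y,w)=d(x,m)$ directly, and never needs the $y$ side; you instead deduce $m\in[x,w]$, repeat the argument symmetrically to get $m\in[y,w]$, and finish via the triple-intersection characterization \eqref{gam3}. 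Your route is slightly longer (the estimate is done twice) but arguably more geometric, since it identifies $m$ as the median $\gam(x,y,w)$ rather than just matching a distance; the paper's route is a touch shorter. Both are entirely sound.
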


\begin{proof}
  \pfitemref{L5a}
Immediate by \refL{L1} and \eqref{ddd3}.

\pfitemref{L5b}
Let, 
using the symmetry of $\gam$ in \refL{LgD}, 
$u:=\gam(x,y,z)=\gam(z,x,y)$ and 
$v:=\gam(x,z,w)$. Then $v\in[z,w]$ and thus, using the assumption,
\begin{align}\label{l5c}
d(z,v)\le d(z,w) <\ddd(z,x,y)=d(z,u). 
\end{align}
Since $u,v\in[x,z]$, this implies
\begin{align}
  d(x,u)=d(x,z)-d(u,z) 
<d(x,z)-d(v,z)=d(x,v).
\end{align}
Hence, if $d(x,u)<t<d(x,v)$, then, by \eqref{xyz},
\begin{align}
  \gf_{x,w}(t)=\gf_{x,z}(t)\neq\gfxy(t)
\end{align}
On the other hand, $u\in[x,y]$ too, and thus
\begin{align}
  \gf_{x,w}(\dxx xu)=\gf_{x,z}(\dxx xu)=u=\gfxy(\dxx u).
\end{align}
Hence, $\ddd(x,y,w)=\dxx xu=\ddd(x,y,z)$ by \refL{LgD}.
\end{proof}

\section{Proof of \refT{T=}}\label{SpfT=}

\ref{T2disco}$\implies$\ref{T2surj}:
If \ref{T2disco} holds and $\psi:\oi\to T$ is a continuous map, let
$x:=\psi(0)$ and $y:=\psi(1)$.
If $z\in [x,y]$ but $z\notin \psi(\oi)$, then $\psi$ is a curve connecting
$x$ and $y$ in $T\setminus\set z$, which contradicts \ref{T2disco}.


\ref{T2surj}$\implies$\ref{T2Y}:
Define $\psi:=\gf_{y,x}*\gf_{x,z}$, see \eqref{concat}. This is a continuous map
$[0,d_{y,x}+d_{x,z}]\to T$ and thus $\psi_1(t):=\psi\bigpar{t(d_{y,x}+d_{x,z})}$
is a continuous map $\psi_1:\oi\to T$.
We have $\psi_1(\oi)=[y,x]\cup[x,z]$,  $\psi_1(0)=y$ and $\psi_1(1)=z$.
Hence \ref{T2surj} yields \ref{T2Y}.

\ref{T2Y}$\implies$\ref{T2tri}:
Let $A:=[y,z]\cap[x,y]$ and $B:=[y,z]\cap[x,z]$. These are closed subsets of
$[y,z]$ and $A\cup B=[y,z]$ by assumption; furthermore, $A$ and $B$ are both
nonempty since $y\in A$ and $z\in B$. Since $[y,z]$ is homeomorphic to
$\oi$, it is connected. Consequently, $A\cap B\neq\emptyset$.

\ref{T2tri}$\implies$\ref{T2ddd}:
By \eqref{gam}, it remains to show that $\gam(x,y,z)\in[y,z]$.

Let $w\in [x,y]\cap[x,z]\cap[y,z]$.
By \refL{LgD}, then $w\in [x,\gam(x,y,z)]$.
Since $w,\gam(x,y,z)\in[x,y]$, \refL{L22} shows that
$  \gam(x,y,z)\in [w,y]$.
Hence,
using also $w\in[y,z]$ and  \refL{L2},
\begin{align}
  \gam(x,y,z)\in [w,y]=[y,w]\subseteq[y,z].
\end{align}

\ref{T2ddd}$\implies$\ref{T2disco}:
Let $z\in(x,y)$. Then $z=\gfxy(\dxz)$ with $0<\dxz<\dxy$.
Partition $T\setminus\set{z}=U_1\cup U_2\cup U_3$ where
\begin{align}
  U_1&:=\set{w\in \Tz:\ddd(x,y,w)<\dxz}, \\
  U_2&:=\set{w\in \Tz:\ddd(x,y,w)=\dxz}, \\
  U_3&:=\set{w\in \Tz:\ddd(x,y,w)>\dxz}.
\end{align}
By \refL{Lddd}, $\ddd(x,y,w)$ is a continuous function of $w$, and thus
$U_1$ and $U_3$ are open subsets of $\Tz$ (and of $T$).
Furthermore, $U_2$ is open by \refL{L5}.

Hence, $\Tz=U_1\cup U_2\cup U_3$ is a partition into three disjoint open
sets. Each connected component of $\Tz$ has to be a subset of one of these,
and since $x\in U_1$ and $y\in U_3$, $x$ and $y$ are in different components.
(Although not needed, it is easy to see that $U_1$ and $U_3$ are connected,
while $U_2$ may be empty, connected or disconnected with any number of
components, finite or infinite.)

\ref{T2surj}$\implies$\ref{T2injsurj}: Trivial.

\ref{T2injsurj}$\implies$\ref{T2inj}:
Suppose that \ref{T2inj} fails, and let $\gf:\oi\to T$ be an injective 
continuous map
such that $\gf(u)\notin[x,y]$ for some $u\in(0,1)$, where $x=\gf(0)$, $y=\gf(1)$.
Since $[x,y]$ is a compact, and thus closed, subset of $T$, the set
$U:=\set{t\in\ooio:\gf(t)\notin[x,y]}$ is an open subset of $(0,1)$.
Hence, the component of $U$ containing $u$ is an open interval
$(a,b)\subseteq\ooio$. 

Let $z:=\gf(a)$ and $w:=\gf(b)$, and note that $z,w\in[x,y]$.  Since $a<b$
and $\gf$ is injective, we have $z\neq w$. 
By \refL{L2}, $[z,w]\subseteq [x,y]$.
The  map $\psi(t):=\gf(a+(b-a)t)$ 
is an injective continuous map $\oi\to T$  such that
$\psi(0)=z$, $\psi(1)=w$ and
\begin{align}
\psi(\oi) \cap [z,w]   
= \gf([a,b]) \cap [z,w]   
\subseteq
\gf([a,b]) \cap [x,y] 
=\gf(\set{a,b}) = \set{z,w} 
.\end{align}
Hence,
if $v\in(z,w)$, then $v\notin\psi(\oi)$.
Thus,
\ref{T2injsurj} does not hold.

\ref{T2inj}$\implies$\ref{T2ddd}:
Let $x,y,z\in T$, and let $w:=\gam(x,y,z)$.
By \refLs{L2} and \ref{LgD},
\begin{align}
  [y,w]\cap [w,z]
=  [w,y]\cap [w,z]
\subseteq [x,y]\cap [x,z]
=[x,w]
\end{align}
and thus, since $w\in[x,y]$,
\begin{align}\label{cola}
  [y,w]\cap [w,z]
\subseteq [x,w]\cap [w,y]
=\set{w}.
\end{align}

Define 
$\psi:[0,\dxx yw+\dxx wz]\to T$
as the concatenation $\psi:=\gf_{y,w}*\gf_{w,z}$,
see \eqref{concat}.
Then $\psi$ is continuous, and it follows from \eqref{cola} that $\psi$ is
injective. Consequently, \ref{T2inj} implies,
after a change of variables, 
\begin{align}
  w=\psi(\dxx yw)\in [\psi(0),\psi(\dxx yw+\dxx wz)]=[y,z].
\end{align}

\ref{T2inj}$\iff$\ref{T2dist}: An immediate consequence of \refL{L1}.

\ref{T2inj}$\implies$\ref{T2=}:
Suppose that $\psi:\oi\to T$ is an injective continuous map.
Let $x:=\psi(0)$ and $y:=\psi(1)$.
By \ref{T2inj}, $\psi:\oi\to[x,y]$. Since $\gfxy$ is a homeomorphism, 
$\hzeta:=\gfxy\qw\circ\psi:\oi\to\oxy$ is an injective continuous map with 
$\hzeta(0)=0$ and $\hzeta(1)=\dxy$.
In particular, the image $\hzeta(\oi)$ is connected, and it follows that
$\hzeta(\oi)=\oxy$. Hence $\hzeta$ is a continuous bijection, and thus a
homeomorphism $\oi\to\oxy$; furthermore, $\hzeta$ has to be stricly increasing.
Finally, the definition of $\hzeta$ yields $\psi=\gfxy\circ\hzeta$.

\ref{T2=}$\implies$\ref{T2inj=}: Trivial.

\ref{T2inj=}$\implies$\ref{T2inj}: Trivial.

This completes the proof of \refT{T=}. \qed

\section{Subtrees}\label{Ssub}

We have, as a simple consequence of the definition and \refT{T=} 
a simple result for subsets  of a real tree.

\begin{theorem} \label{Tsubtree}
Let $T$ be a real tree, and let $S\subseteq T$ be a nonempty subset of $T$,
regarded as a metric space with the induced metric. 
Then the following are equivalent.
\begin{romenumerate}
  
\item \label{Tsubt}
$S$ is a real tree.
\item \label{Tsubc}
$S$ is connected.
\item \label{Tsubpc}
$S$ is pathwise connected.
\item \label{Tsubxy}
If\/ $x,y\in S$, then $[x,y]\subseteq S$. (Here $[x,y]$ is taken in the real
tree $T$.)
\end{romenumerate}
\end{theorem}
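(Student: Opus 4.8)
The plan is to prove \refT{Tsubtree} by establishing the cycle of implications \ref{Tsubt}$\implies$\ref{Tsubc}$\implies$\ref{Tsubpc}$\implies$\ref{Tsubxy}$\implies$\ref{Tsubt}. The first implication \ref{Tsubt}$\implies$\ref{Tsubc} is immediate: by \refL{L0}, any real tree (in particular $S$ with its induced metric) is connected. The implication \ref{Tsubpc}$\implies$\ref{Tsubc} is of course trivial, but since I want a cycle I should instead go \ref{Tsubc}$\implies$\ref{Tsubxy} next, or route through \ref{Tsubxy}; let me restructure as \ref{Tsubt}$\implies$\ref{Tsubpc}$\implies$\ref{Tsubc}$\implies$\ref{Tsubxy}$\implies$\ref{Tsubt}. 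Here \ref{Tsubt}$\implies$\ref{Tsubpc} is again \refL{L0}, and \ref{Tsubpc}$\implies$\ref{Tsubc} is trivial (pathwise connected spaces are connected).

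The substantive step is \ref{Tsubc}$\implies$\ref{Tsubxy}: if $S$ is connected and $x,y\in S$, then $[x,y]\subseteq S$ (where $[x,y]$ is computed in $T$). Suppose not, so there is $z\in[x,y]$ with $z\notin S$. We may assume $x\ne y$, and then $z\in(x,y)$ (if $z=x$ or $z=y$ it would lie in $S$). Now apply condition \ref{T2disco} for the real tree $T$: $x$ and $y$ lie in different components of $T\setminus\set z$. Since $S\subseteq T\setminus\set z$ (as $z\notin S$), the connected set $S$ must lie entirely within one component of $T\setminus\set z$; but $x,y\in S$ lie in different components, a contradiction. Hence $[x,y]\subseteq S$. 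This is the step I expect to be the main (though still modest) obstacle, the key point being the correct invocation of \ref{T2disco} together with the elementary fact that a connected subset of a space is contained in a single connected component of any superset.

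Finally, \ref{Tsubxy}$\implies$\ref{Tsubt}: assume $[x,y]\subseteq S$ for all $x,y\in S$. I must check that $S$, with the induced metric, satisfies \ref{T1} and one of \refTT. For \ref{T1}: given $x,y\in S$, the map $\gfxy:\oxy\to T$ has image $[x,y]\subseteq S$, so it is an isometric embedding of $\oxy$ into $S$ with the right endpoints; uniqueness of such an embedding into $S$ follows from uniqueness in $T$, since any isometric embedding $\oxy\to S$ is also one into $T$. Thus \ref{T1} holds for $S$, and moreover the arc $[x,y]$ computed in $S$ coincides with the one computed in $T$. It then remains to verify one of the equivalent conditions, and \ref{T2Y} is the most convenient: for $x,y,z\in S$ we have, by what was just said, that $[y,z], [x,y], [x,z]$ are the same whether computed in $S$ or in $T$, and the inclusion $[y,z]\subseteq[x,y]\cup[x,z]$ holds in $T$ since $T$ is a real tree, hence it holds in $S$. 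By \refT{T=}, $S$ is a real tree. This completes the cycle and the proof.

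One small remark worth inserting: throughout, the case $x=y$ is trivial since $[x,x]=\set x\subseteq S$, so all arguments may assume $x\ne y$ where convenient. I would present the four implications in the order above as short labeled paragraphs, mirroring the style already used in \refS{SpfT=}, and close with "\qed" or let the \texttt{proof} environment supply it.
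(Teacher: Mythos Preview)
Your proof is correct and follows essentially the same approach as the paper. The only minor differences are in routing: the paper does not use a single cycle but proves \ref{Tsubpc}$\implies$\ref{Tsubt} directly (using \ref{T2surj} to obtain $[x,y]\subseteq S$ from an arbitrary path, then verifying \ref{T2dist}), whereas you go \ref{Tsubxy}$\implies$\ref{Tsubt} and verify \ref{T2Y}; the argument for \ref{Tsubc}$\implies$\ref{Tsubxy} is the same in both.
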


\begin{proof}
\ref{Tsubt}$\implies$\ref{Tsubpc},
\ref{Tsubxy}$\implies$\ref{Tsubpc}, and
\ref{Tsubpc}$\implies$\ref{Tsubc} are trivial.
  
\ref{Tsubpc}$\implies$\ref{Tsubt}:
Suppose that $S$ is  pathwise connected. Then,
if $x,y\in S$, then there exists a continuous map $\psi:\oi\to S$ with
$\psi(0)=x$ and $\psi(1)=y$. By \ref{T2surj} (for the real tree $T$),
$\psi(\oi)\supseteq[x,y]:=\gfxy\bigpar{[0,\dxy]}$, where $\gfxy$ is the
mapping in \ref{T1} for the real tree $T$.
Hence, $\gfxy:\oxy\to S$, and thus \ref{T1} holds for $S$ too;
uniqueness follows because
$\gfxy$ obviously is unique in $S$ if it is unique in $T$.
Finally, \ref{T2dist} holds in $S$ since it holds in $T$. (In fact, we
could here argue with any of \ref{T2first}--\ref{T2last}.)
Hence, $S$ is a real tree.

\ref{Tsubc}$\implies$\ref{Tsubxy}:
Suppose that $\Tsub$ is connected.
 Let $x,y\in \Tsub$, and consider $[x,y]$ (in the real tree $T$).
Let $z\in(x,y)$, and suppose that $z\notin \Tsub$.
By \ref{T2disco} and \refL{L0}, the components of
$T\setminus{z}$ are disjoint open sets, with $x$ and $y$ in different
components.
Let $U$ be the component containing $x$, and $V$ the union of all other
components; then
$T\setminus{z}= U\cup V$ where $U$ and $V$ 
are  open disjoint subsets 
with
$x\in U$ and $y\in V$. Consequently,
$\Tsub = (\Tsub\cap U)\cup (\Tsub\cap V)$, where $\Tsub\cap U$ and
$\Tsub\cap V$ are two nonempty
disjoint open subsets of $\Tsub$. This contradicts the assumption that
$\Tsub$ is 
connected, and this contradiction shows that $(x,y)\subseteq \Tsub$.
Hence, $[x,y]\subseteq \Tsub$.
\end{proof}

A \emph{subtree} of a real tree is thus a connected nonempty subset.

\begin{theorem}
  The intersection of any family $\set{T_\ga}$ of subtrees of a real tree $T$
  is a subtree of $T$, provided is it nonempty.
\end{theorem}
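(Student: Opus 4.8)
The plan is to reduce everything to the characterization of subtrees in \refT{Tsubtree}, in particular the equivalence \ref{Tsubc}$\iff$\ref{Tsubxy}. Write $S:=\bigcap_\ga T_\ga$, and assume, as in the hypothesis, that $S\neq\emptyset$. Recall that the definition of subtree (stated just before) is simply: a connected nonempty subset; so it suffices to show that $S$ is connected.

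First I would note that for each $\ga$, the subtree $T_\ga$ is a connected nonempty subset of $T$, so by \refT{Tsubtree} (the implication \ref{Tsubc}$\implies$\ref{Tsubxy}) it has the property that $[x,y]\subseteq T_\ga$ whenever $x,y\in T_\ga$, where $[x,y]$ denotes the arc in the ambient real tree $T$, i.e.\ the image of $\gfxy$ from \ref{T1}. Next, take any $x,y\in S$. Then $x,y\in T_\ga$ for every $\ga$, hence $[x,y]\subseteq T_\ga$ for every $\ga$, and therefore $[x,y]\subseteq\bigcap_\ga T_\ga=S$. Thus $S$ is a nonempty subset of $T$ satisfying condition \ref{Tsubxy} of \refT{Tsubtree}, so that theorem (the implication \ref{Tsubxy}$\implies$\ref{Tsubt}, equivalently \ref{Tsubxy}$\implies$\ref{Tsubc}) shows that $S$ is a real tree; in particular it is connected and nonempty, hence a subtree of $T$.

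There is no genuine obstacle here; the one point worth a word of care is that the notation $[x,y]$ is unambiguous across the family. The arc $[x,y]$ is the image of the map $\gfxy$ of \ref{T1} for the real tree $T$, and by the uniqueness clause in \ref{T1} this coincides with the corresponding arc computed inside any subtree containing $x$ and $y$ — exactly the observation already used in the proof of \refT{Tsubtree}. Consequently the same set $[x,y]$ may legitimately be intersected against all of the $T_\ga$ simultaneously, which is all that the argument requires.
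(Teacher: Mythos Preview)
Your proof is correct and follows essentially the same route as the paper: both pass through \refT{Tsubtree}\ref{Tsubxy}, observing that $[x,y]\subseteq T_\ga$ for every $\ga$ and hence $[x,y]\subseteq S$, and then invoke \refT{Tsubtree} again to conclude that $S$ is a subtree. Your closing remark about the unambiguity of $[x,y]$ is a harmless clarification, but note that since $[x,y]$ is by definition taken in the ambient tree $T$, no comparison with arcs inside the $T_\ga$ is actually needed.
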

\begin{proof}
  This is an immediate consequence of \refT{Tsubtree}.
Let $S:=\bigcap_\ga T_\ga$. If $x,y\in S$, then \refT{Tsubtree}\ref{Tsubxy}
shows that $[x,y]\subseteq T_\ga$ for every $\ga$, and thus $[x,y]\subseteq S$.
Hence, another application of \refT{Tsubtree} shows that $S$ is a subtree.
\end{proof}

In particular, it follows that if $T$ is a real tree, then for any nonempty
set $U\subseteq T$, there exists a smallest subtree $S\subseteq T$ with
$U\subseteq S$; we say that $S$ is the subtree \emph{spanned by $U$}.
This subtree can be described as follows.

\begin{theorem}\label{Tspanned}
  Let $T$ be a real tree and let $S$ be the subtree generated by a nonempty
  set $U\subseteq T$. Then
  \begin{align}\label{tspanned1}
    S = \bigcup_{x,y\in U} [x,y].
  \end{align}
Furthermore, for every $x\in U$, we also have
  \begin{align}\label{tspanned2}
    S = \bigcup_{y\in U} [x,y].
  \end{align}
\end{theorem}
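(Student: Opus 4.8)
The plan is to prove \eqref{tspanned1} and \eqref{tspanned2} together by a chain of inclusions, exploiting the characterization of subtrees from \refT{Tsubtree}, in particular the criterion \ref{Tsubxy}. Fix once and for all a point $x_0\in U$, and set
\begin{align*}
  S_1 &:= \bigcup_{x,y\in U}[x,y], &
  S_2 &:= \bigcup_{y\in U}[x_0,y].
\end{align*}
Clearly $S_2\subseteq S_1$, since each $[x_0,y]$ with $y\in U$ is one of the sets in the union defining $S_1$. Also $U\subseteq S_2\subseteq S_1$, because $y=\gf_{x_0,y}(d_{x_0,y})\in[x_0,y]$ for every $y\in U$ (and $x_0\in[x_0,x_0]$). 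Finally, since $S$ is the smallest subtree containing $U$, and since \refT{Tsubtree}\ref{Tsubxy} gives $[x,y]\subseteq S$ for all $x,y\in U$, we get $S_1\subseteq S$. So the task reduces to showing $S\subseteq S_2$, and since $S$ is the smallest subtree containing $U$ while $U\subseteq S_2$, it suffices to prove that $S_2$ is a subtree, i.e.\ (by \refT{Tsubtree}) that $S_2$ is connected, or equivalently — and this is the route I would take — that $S_2$ satisfies \ref{Tsubxy}: for any $z,w\in S_2$, we have $[z,w]\subseteq S_2$.

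So let $z,w\in S_2$, say $z\in[x_0,y]$ and $w\in[x_0,y']$ with $y,y'\in U$. The key step is to locate $[z,w]$ inside $[x_0,y]\cup[x_0,y']$. Let $g:=\gam(x_0,y,y')$, the branch point of the three points; by \ref{T2ddd} (available since $T$ is a real tree) together with \refL{Lddd} we have $g\in[x_0,y]\cap[x_0,y']\cap[y,y']$, and moreover $[x_0,y]\cap[x_0,y']=[x_0,g]$ by \refL{LgD}. Now I distinguish according to where $z$ and $w$ sit relative to $g$ along the respective segments. Using \refL{L2} repeatedly: the segment $[x_0,y]$ is split as $[x_0,g]\cup[g,y]$ and $z$ lies in one of the two halves; likewise $w\in[x_0,g]$ or $w\in[g,y']$. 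If both $z,w\in[x_0,g]$, then $[z,w]\subseteq[x_0,g]\subseteq[x_0,y]\subseteq S_2$ by \refL{L2}. If $z\in[x_0,g]$ and $w\in[g,y']$ (or symmetrically), then $g\in[z,w]$: indeed $z,g,w$ all lie on the concatenated path $\gf_{x_0,y'}$ restricted appropriately, and \refL{L22} shows $g\in[z,w]$; then $[z,g]\subseteq[x_0,g]\subseteq[x_0,y]$ and $[g,w]\subseteq[g,y']\subseteq[x_0,y']$, so $[z,w]=[z,g]\cup[g,w]\subseteq S_2$ using \refL{L2} again to see the concatenation equals $[z,w]$. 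The remaining case $z\in[g,y]$, $w\in[g,y']$ is handled the same way: again $g\in[z,w]$ by \refL{L22} applied on $[y,y']$ (which contains $g$, $z$, $w$ since $[g,y]\subseteq[y,y']$? — more carefully, one checks $z\in[g,y]$ and $[g,y']$ meet only in $g$, as both are contained in $[x_0,y]$ and $[x_0,y']$ whose intersection is $[x_0,g]$, forcing $[g,y]\cap[g,y']=\{g\}$), and then $[z,w]=[z,g]\cup[g,w]\subseteq[x_0,y]\cup[x_0,y']\subseteq S_2$.

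The main obstacle is the bookkeeping in that last case distinction: making precise that $[z,w]$ decomposes at $g$ requires knowing $[g,y]\cap[g,y']=\{g\}$, which follows from $[x_0,y]\cap[x_0,y']=[x_0,g]$ via \refL{LgD} and \refL{L2}, and then that the concatenation $\gf_{z,g}*\gf_{g,w}$ is an isometry hence equals $\gf_{z,w}$ — exactly the type of argument already used in the proof of \ref{T2inj}$\implies$\ref{T2ddd} in \refS{SpfT=}. Once this is in place, $S_2$ satisfies \refT{Tsubtree}\ref{Tsubxy}, hence is a subtree containing $U$, hence $S\subseteq S_2$; combined with $S_2\subseteq S_1\subseteq S$ this gives $S=S_1=S_2$, proving both \eqref{tspanned1} and \eqref{tspanned2}.
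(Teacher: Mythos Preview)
Your proof is correct, but you take a much longer road than the paper does. Both arguments reduce to the chain $S_2\subseteq S_1\subseteq S$ together with showing that $S_2$ is a subtree containing $U$. The divergence is in how you verify that $S_2$ is a subtree: you invoke criterion~\ref{Tsubxy} of \refT{Tsubtree} and then run a case analysis on the position of $z\in[x_0,y]$ and $w\in[x_0,y']$ relative to the branch point $g=\gam(x_0,y,y')$, ultimately reconstructing $[z,w]$ as a concatenation through $g$. This is sound, but the bookkeeping (verifying $[g,y]\cap[g,y']=\{g\}$, then that the concatenation is injective and hence lands in $[z,w]$ via \ref{T2inj}) is real work.

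The paper instead uses criterion~\ref{Tsubpc} of \refT{Tsubtree}: $S_2$ is pathwise connected simply because every interval $[x_0,y]$ in the union contains the common point $x_0$, so any two points of $S_2$ are joined by a path through $x_0$. That single sentence replaces your entire case analysis. Your argument does buy something --- it gives, as a by-product, the explicit description $[z,w]\subseteq[x_0,y]\cup[x_0,y']$ --- but for the theorem as stated the paper's route is decisively simpler.
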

\begin{proof}
  Denote the unions in \eqref{tspanned1} and \eqref{tspanned2} by $S'$ and
  $S''_x$, respectively.
Then \refT{Tsubtree}\ref{Tsubxy} shows that $S''_x\subseteq S'\subseteq S$.

On the other hand, $S''_x$ is pathwise connected, since every interval $[x,y]$
is and they contain a common point $x$. Thus \refT{Tsubtree} shows that
$S''_x$ is a subtree. Since $S''_x\supseteq U$, it follows that 
$S''_x\supseteq S$,
and the result follows.
\end{proof}

\section{The four-point inequality}

A different type of characterization of real trees is given by the following
theorem, 
see \eg{} \cite[Theorem 3.40]{Evans} or \cite{Dress} and the references there.
(This characterization is less intuitive, but technically very useful.)
The condition \eqref{4point} is called the 
\emph{four-point inequality} or
\emph{four-point condition};
an equivalent condition is \emph{$0$-hyperbolicity},
see \refD{DGromov} and \refL{L4=}.

\begin{theorem}\label{T4}
A metric space $T$ is a real tree if and only if $T$ is connected and
for any four points $x,y,z,w\in X$
\begin{align}\label{4point}
  d(x,y)+d(z,w) \le \bigpar{d(x,z)+d(y,w)}\bmax\bigpar{d(x,w)+d(y,z)}.
\end{align}
\end{theorem}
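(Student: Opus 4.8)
The plan is to prove both directions of the equivalence in \refT{T4}, building on the characterization already established in \refT{T=}.

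\medskip
\noindent\textbf{Outline of the proof.}
The forward direction is the easy one. Assume $T$ is a real tree; then $T$ is connected by \refL{L0}. To verify the four-point inequality~\eqref{4point} for arbitrary $x,y,z,w\in T$, I would fix $x$ and use the symmetric function $\gam$ (well-defined by \refL{Lddd}, since \ref{T1} and \ref{T2ddd} hold in a real tree) together with the explicit formula~\eqref{ddd3}. Concretely, writing $a:=d(x,\gam(x,y,z))$ etc.\ and analyzing how the three points $y,z,w$ attach to the path from $x$, one reduces~\eqref{4point} to a finite case check on the metric of a tripod (or the reduced spanning subtree of $\set{x,y,z,w}$, which by \refT{Tspanned} is a union of at most three arcs from $x$). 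The standard bookkeeping: of the three pairwise sums $d(x,y)+d(z,w)$, $d(x,z)+d(y,w)$, $d(x,w)+d(y,z)$, the two largest are equal and are $\ge$ the smallest — this is where the tree structure (no cycles) is used, via \refL{L1}, \refL{L22}, and \refL{LgD} to locate the branch points.

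\medskip
\noindent\textbf{The converse is the main obstacle.} Assume $T$ is connected and satisfies~\eqref{4point}; I must produce the isometric embeddings of \ref{T1} and then verify one of \refTT{} (most naturally \ref{T2surj} or \ref{T2disco}). The construction of $\gfxy$ is the heart of the matter. Given $x,y\in T$, define for $z\in T$ the Gromov-product-type quantity $(x\,|\,y)_z$ and, dually, the candidate "height" function. I would use~\eqref{4point} to show that the map $z\mapsto \bigpar{d(x,z),d(y,z)}$ constrains points enough that the would-be segment $[x,y]$ can be parametrized; more precisely, one shows that for each $t\in[0,d(x,y)]$ there is at most one point at distance $t$ from $x$ and $d(x,y)-t$ from $y$ (uniqueness comes directly from~\eqref{4point}, which forces any two such points to coincide), and then \emph{existence} of such points is extracted from connectedness together with the four-point inequality — this existence step is the delicate one, since~\eqref{4point} alone on a disconnected space need not give segments. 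The cleanest route: show the function $f(z):=d(x,z)-d(y,z)$ is continuous with $f(x)=d(x,y)$, $f(y)=-d(x,y)$, so by connectedness it takes all intermediate values; then use~\eqref{4point} to prove that the level set $f^{-1}(d(x,y)-2t)$, intersected with the closed ball $\overline B(x,t)$, is a single point $\gfxy(t)$, and that $t\mapsto\gfxy(t)$ is an isometric embedding. The triangle inequality plus~\eqref{4point} give $d(\gfxy(s),\gfxy(t))=|s-t|$ after a short computation.

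\medskip
\noindent\textbf{Finishing.} Once \ref{T1} is in hand (uniqueness of $\gfxy$ also follows from~\eqref{4point}, since any isometric arc from $x$ to $y$ has its points pinned down by the level-set argument above), I would verify condition \ref{T2surj}: given a continuous $\psi:\oi\to T$ with $\psi(0)=x$, $\psi(1)=y$ and a point $z\in[x,y]$, the composition $t\mapsto f(\psi(t)) = d(x,\psi(t))-d(y,\psi(t))$ is continuous on $\oi$, runs from $d(x,y)$ to $-d(x,y)$, hence hits the value corresponding to $z$; combined with the uniqueness of points on level sets this forces $z\in\psi(\oi)$, giving $\psi(\oi)\supseteq[x,y]$. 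By \refT{T=}, $T$ is then a real tree. I expect the level-set uniqueness lemma (two points with the same distances to both $x$ and $y$ must be equal, using~\eqref{4point} with $w$ ranging over these two points and over $x,y$) to be the single most important technical ingredient, and the isometry computation for $\gfxy$ to be the most calculation-heavy but routine part.
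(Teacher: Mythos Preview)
Your approach is quite different from the paper's. The paper does not construct geodesics by hand: it simply invokes \refT{T4+} (a metric space satisfies~\eqref{4point} iff it embeds isometrically in some real tree --- cited from the literature, not proved here) and then applies \refT{Tsubtree} to conclude that a connected subset of a real tree is itself a real tree. Both directions of \refT{T4} fall out in two lines.

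Your direct route can in principle be made to work, but the existence argument for $\gfxy(t)$ has a real gap. The intermediate value theorem applied to $f(z)=d(x,z)-d(y,z)$ (note the sign: $f(x)=-\dxy$, not $\dxy$) only produces a point $z$ with the correct \emph{difference} of distances; such a $z$ satisfies $d(x,z)\ge t$, with equality iff $z$ lies on the putative segment, and you have not explained why equality is ever attained. Your claim that the level set of $f$ intersected with $\overline B(x,t)$ is ``a single point'' presupposes nonemptiness, which does not follow from the IVT step. What actually works is a second use of connectedness: one checks via~\eqref{4point} (in the Gromov-product form) that any $w$ in the level set with $(x,y)_w>0$ has an entire ball around it contained in the level set; hence if no segment point existed at parameter $t$, the level set would be open, closed, nonempty and proper --- contradicting connectedness. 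This clopen step is the missing idea.

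Your verification of \ref{T2surj} has a related flaw: level sets of $f$ are \emph{not} singletons (only their intersection with the segment is), so $f(\psi(t_0))=f(z)$ does not force $\psi(t_0)=z$. Once \ref{T1} is in hand it is cleaner to verify \ref{T2Y} directly: for $w\in[y,z]$ the sum $d(x,w)+d(y,z)$ dominates the other two pair-sums by the triangle inequality, so~\eqref{4point} forces it to equal one of them, and then \refL{L1} places $w$ in $[x,y]$ or in $[x,z]$.
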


\begin{remark}
  It is easily verified that \eqref{4point} is trivial if two or more of
  $x,y,z,w$ coincide; hence it does not matter whether we require $x,y,z,w$
  to be distinct or not.
\end{remark}

\begin{remark}
  By considering all permutations of $x,y,z,w$, it follows that
\eqref{4point} is equivalent to the condition that
(for any $x,y,z,w$), among the three sums
\begin{align}\label{4point2}
    d(x,y)+d(z,w),&& d(x,z)+d(y,w),&&d(x,w)+d(y,z),&
\end{align}
two are equal and the third is equal or less than the other two.
\end{remark}

\refT{T4} is a simple corollary of the following more general result
together with \refT{Tsubtree}. 
For proofs see \eg{} 
\citet[Theorem 8]{Dress},
\citet[\S6.1]{Gromov}, 
or the references mentioned 
in \cite{Dress};
see also \cite{DressMoultonT}.

\begin{theorem}\label{T4+}
Let $X$ be a metric space. Then $X$ can be isometrically embedded into a
real tree if and only if 
the \FPC{}
\eqref{4point} holds
for any four points $x,y,z,w\in X$.
\qed
\end{theorem}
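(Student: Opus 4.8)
The plan is to prove the two implications of \refT{T4+} separately. The implication ``$X$ embeds isometrically into a real tree $\implies$ \eqref{4point}'' is elementary, since an isometric embedding preserves the six pairwise distances among $x,y,z,w$; so it is enough to verify \eqref{4point} in an arbitrary real tree $T$. Fix $x,y,z,w\in T$ and put $p:=\gam(z,x,y)$ and $q:=\gam(w,x,y)$. By \eqref{gam} we have $p\in[z,x]\cap[z,y]$ and $q\in[w,x]\cap[w,y]$, and since $T$ is a real tree, condition \ref{T2ddd} (\refT{T=}) gives $p,q\in[x,y]$. Relabelling $x$ and $y$ if necessary, assume $d(x,p)\le d(x,q)$; then $\gfxy$ being an isometry (\refL{L2}) places $x,p,q,y$ in this order along $[x,y]$, so $d(x,y)=d(x,p)+d(p,q)+d(q,y)$ and $d(y,p)=d(q,p)+d(q,y)$, while \refL{L1} gives $d(x,w)=d(x,p)+d(p,q)+d(q,w)$ and $d(y,z)=d(y,p)+d(p,z)$. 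Therefore
\begin{align*}
 d(x,w)+d(y,z)
 &=\bigpar{d(x,p)+d(p,q)+d(q,y)}+\bigpar{d(q,p)+d(q,w)+d(p,z)}\\
 &=d(x,y)+\bigpar{d(p,z)+d(p,q)+d(q,w)}\ge d(x,y)+d(z,w),
\end{align*}
the last step by two uses of the triangle inequality; hence $d(x,y)+d(z,w)\le d(x,w)+d(y,z)$, which implies \eqref{4point}.

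For the converse, assume $X\neq\emptyset$ satisfies \eqref{4point} (the empty space is trivial). Fix a base point $p\in X$ and write $(x|y):=\tfrac12\bigpar{d(p,x)+d(p,y)-d(x,y)}$ for the Gromov product. The triangle inequality gives $0\le(x|y)\le d(p,x)\bmin d(p,y)$, and a two-case application of \eqref{4point} to the points $p,x,y,z$ yields the ``$0$-hyperbolicity'' inequality
\begin{align*}
 (x|z)\ge(x|y)\bmin(y|z),\qquad x,y,z\in X.
\end{align*}
I would then construct $T$ by gluing one segment per point of $X$: on the disjoint union $Y:=\bigsqcup_{x\in X}\bigpar{\set x\times[0,d(p,x)]}$ declare $(x,s)\sim(y,t)$ iff $s=t\le(x|y)$; reflexivity and symmetry are clear, and transitivity is precisely the $0$-hyperbolicity inequality. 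Set $T:=Y/{\sim}$, let $o$ be the common class of all points $(x,0)$ (well defined because $(x|y)\ge0$), and metrize $T$ by
\begin{align*}
 \rho\bigpar{[(x,s)],[(y,t)]}:=s+t-2\bigpar{s\bmin t\bmin(x|y)}.
\end{align*}

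It then remains to check four things, each a finite case analysis driven by $0$-hyperbolicity. (a) $\rho$ is well defined, i.e.\ unchanged when $(x,s)$ is replaced by an equivalent $(x',s)$: one shows $s\bmin t\bmin(x|y)=s\bmin t\bmin(x'|y)$ whenever $s\le(x|x')$. (b) $\rho$ is a metric: symmetry is clear, $\rho\ge0$ since $s\bmin t\bmin(x|y)\le\tfrac12(s+t)$, $\rho=0$ forces $(x,s)\sim(y,t)$, and the triangle inequality is the one genuine computation. (c) $\iota\colon X\to T$ defined by $\iota(x):=[(x,d(p,x))]$ is an isometric embedding: since $(x|y)\le d(p,x)\bmin d(p,y)$, the inner minimum equals $(x|y)$ and so $\rho(\iota x,\iota y)=d(p,x)+d(p,y)-2(x|y)=d(x,y)$. (d) $T$ is a real tree: it is pathwise connected because the image of $[0,s]$ inside $\set x\times[0,d(p,x)]$ joins $[(x,s)]$ to $o$; for \ref{T1}, the path that descends $\set x\times[0,d(p,x)]$ from level $s$ to level $m:=s\bmin t\bmin(x|y)$ and then ascends $\set y\times[0,d(p,y)]$ to level $t$ is, by the formula for $\rho$, an isometric embedding of $[0,\rho([(x,s)],[(y,t)])]$, and uniqueness follows since the height map $[(z,a)]\mapsto a$ is $1$-Lipschitz for $\rho$, forcing any such isometric embedding to be this one; and \ref{T2dist} is then immediate from the formula for $\rho$. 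Since $\iota(X)\subseteq T$, this gives the embedding.

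The real work is entirely in the converse direction: proving the triangle inequality for $\rho$ and verifying that $(T,\rho)$ really does satisfy \ref{T1} together with one of \refTT. Both reduce to the single inequality $(x|z)\ge(x|y)\bmin(y|z)$, but the bookkeeping of which of several competing minima is attained requires care, and that is the crux of the argument. A less hands-on alternative is to embed $X$ into its injective hull (tight span) and invoke the fact that the tight span of a $0$-hyperbolic metric space is a real tree; this, however, merely moves the same difficulty into the theory of tight spans.
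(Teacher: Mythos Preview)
The paper does not prove \refT{T4+}; it cites \cite{Dress}, \cite{Gromov}, and \cite{DressMoultonT} for proofs, so there is no in-paper argument to compare against.  Your forward direction is correct and clean, and your converse uses the standard ``glue a segment of length $d(p,x)$ for each $x\in X$, identifying initial portions up to the Gromov product'' construction, which is indeed the right idea.  Steps (a)--(c) and the existence half of \ref{T1} are fine.

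The gap is in step (d).  The height map being $1$-Lipschitz does \emph{not} by itself force uniqueness of geodesics: distance to a basepoint is $1$-Lipschitz in every metric space.  Concretely, for a putative geodesic $\varphi$ from $P=[(x,s)]$ to $Q=[(y,t)]$, the $1$-Lipschitz bound only gives $|h(\varphi(r))-s|\le r$ and $|h(\varphi(r))-t|\le \ell-r$, which leaves the height profile underdetermined.  What actually pins $\varphi(r)=[(z,a)]$ down is the equation $\rho(P,\varphi(r))+\rho(\varphi(r),Q)=\ell$, which unwinds to $\alpha+\beta=a+m$ with $\alpha=s\bmin a\bmin(x|z)$, $\beta=t\bmin a\bmin(y|z)$; a short case analysis using $0$-hyperbolicity (among any triple of Gromov products, the two smallest coincide) then forces $\alpha=a$ or $\beta=a$, hence $\varphi(r)$ lies on the canonical geodesic.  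This is exactly the ``bookkeeping of competing minima'' you flag as the crux, but it must be carried out here rather than absorbed into the $1$-Lipschitz remark.  Likewise, \ref{T2dist} concerns \emph{all} injective continuous maps $\psi\colon[0,1]\to T$, not just isometric ones, so it is not ``immediate from the formula for $\rho$''; once uniqueness in \ref{T1} is established, it is cleaner to verify \ref{T2Y} by another small case split on the three minima $m_{AB},m_{AC},m_{BC}$.  Finally, note that you cannot shortcut by checking the \FPC{} for $(T,\rho)$ and invoking \refT{T4}, since in this paper \refT{T4} is derived from \refT{T4+} and the argument would be circular.
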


\begin{proof}[Proof of \refT{T4} from \refT{T4+}]
  Suppose that $T$ is connected and that \eqref{4point} holds.
Then, by \refT{T4+}, $T\subseteq\hT$ for some real tree $\hT$.
Since $T$ is connected, $T$ is a real tree by \refT{Tsubtree}.
\end{proof}

Among the consequences we mention the following.

\begin{theorem}
  \label{Tcomplete}
If $T$ is a real tree, then so is its completion $\tT$.
\end{theorem}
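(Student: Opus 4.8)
The plan is to use the four-point characterization from \refT{T4}: the completion $\tT$ is a metric space, so it suffices to check that $\tT$ is connected and that the \FPC{} \eqref{4point} holds for all quadruples in $\tT$. Both facts should transfer from $T$ to $\tT$ by continuity and density, since $T$ is dense in $\tT$ by construction of the completion.

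First I would verify the \FPC{} in $\tT$. Given $x,y,z,w\in\tT$, choose sequences $x_n,y_n,z_n,w_n\in T$ converging to them; the \FPC{} holds for each quadruple $(x_n,y_n,z_n,w_n)$ because $T$ is a real tree, hence satisfies \eqref{4point} by \refT{T4}. Since the metric on $\tT$ is continuous (indeed $1$-Lipschitz in each variable) and both sides of \eqref{4point} are continuous functions of the six pairwise distances, letting $n\to\infty$ preserves the inequality. So \eqref{4point} holds throughout $\tT$. Note the maximum on the right-hand side is continuous, so there is no subtlety here.

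Next I would check that $\tT$ is connected. The cleanest route is to observe that $\tT$ is in fact pathwise connected: given $x,y\in\tT$, pick $x_n\to x$ and $y_n\to y$ in $T$. Inside the real tree $T$ we have the isometric arcs $\gf_{x_n,y_n}:[0,d(x_n,y_n)]\to T\subseteq\tT$. One wants to pass to a limit arc from $x$ to $y$. A convenient way: reparametrize each $\gf_{x_n,y_n}$ to a map $\psi_n:\oi\to\tT$ by $\psi_n(t):=\gf_{x_n,y_n}\bigpar{t\,d(x_n,y_n)}$; each $\psi_n$ is Lipschitz with constant $d(x_n,y_n)\to d(x,y)$, and $\psi_n(0)=x_n\to x$, $\psi_n(1)=y_n\to y$. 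One checks directly from \refL{L1} (the betweenness characterization of $[x_n,y_n]$) together with the \FPC{} that the maps $\psi_n$ form a Cauchy sequence in the sup metric: for $t$ fixed, $d\bigpar{\psi_m(t),\psi_n(t)}$ is controlled by $d(x_m,x_n)$, $d(y_m,y_n)$ and the gap between $d(x_m,y_m)$ and $d(x_n,y_n)$, using that $\psi_n(t)$ is the point of $[x_n,y_n]$ at distance $t\,d(x_n,y_n)$ from $x_n$. Since $\tT$ is complete, the uniform limit $\psi:\oi\to\tT$ exists, is continuous, and joins $x$ to $y$. Alternatively — and more simply — one can avoid the explicit estimate by invoking \refT{T4+}: $T$ isometrically embeds in the real tree $T$ itself, so $\tT$, being a metric space satisfying \eqref{4point}, embeds isometrically in some real tree $\hT$; then take $[x,y]$ inside $\hT$ — but this only shows $\tT$ is connected if that arc lies in $\tT$, which is not automatic, so the direct Cauchy-sequence argument is the honest one.

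The main obstacle is precisely this connectedness step: the \FPC{} passes to limits for free, but producing an arc (or even just a connected set) joining two limit points requires a genuine compactness/completeness argument, namely showing the arcs $\gf_{x_n,y_n}$ converge uniformly after reparametrization. Once that Cauchy estimate is in hand, $\tT$ is pathwise connected, hence connected, and \refT{T4} finishes the proof. I would present the connectedness argument as the core of the proof and dispatch the \FPC{} transfer in one sentence.

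\begin{proof}
Since $T$ is dense in its completion $\tT$, and the metric is continuous, the \FPC{} \eqref{4point} passes from $T$ to $\tT$: given $x,y,z,w\in\tT$, approximate by points of $T$ and let them converge, noting that both sides of \eqref{4point} are continuous in the pairwise distances. By \refT{T4} it remains to check that $\tT$ is connected.

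We show $\tT$ is pathwise connected. Let $x,y\in\tT$ and pick $x_n,y_n\in T$ with $x_n\to x$, $y_n\to y$. Set $\ell_n:=d(x_n,y_n)$, so $\ell_n\to\ell:=d(x,y)$, and define $\psi_n:\oi\to T\subseteq\tT$ by $\psi_n(t):=\gf_{x_n,y_n}(t\ell_n)$, so that $\psi_n(0)=x_n$, $\psi_n(1)=y_n$. By \refL{L1}, $\psi_n(t)$ is characterized as the unique point $p$ of $T$ with $d(x_n,p)=t\ell_n$ and $d(p,y_n)=(1-t)\ell_n$. Using the triangle inequality and the \FPC{}, for $m,n$ and $t\in\oi$ one estimates $d\bigpar{\psi_m(t),\psi_n(t)}$ in terms of $d(x_m,x_n)$, $d(y_m,y_n)$ and $\abs{\ell_m-\ell_n}$; hence $(\psi_n)$ is Cauchy in the uniform metric on maps $\oi\to\tT$. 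Since $\tT$ is complete, $\psi_n\to\psi$ uniformly for some continuous $\psi:\oi\to\tT$ with $\psi(0)=x$, $\psi(1)=y$. Thus $\tT$ is pathwise connected, in particular connected, and \refT{T4} shows $\tT$ is a real tree.
\end{proof}
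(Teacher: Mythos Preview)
Your proof is correct, but you have worked much harder than necessary on the connectedness step, and in doing so misidentified where the difficulty lies. The paper's proof is two sentences: the \FPC{} passes to $\tT$ by density and continuity (exactly as you argue), and $\tT$ is connected simply because it is the closure of the connected set $T$ --- an elementary fact of point-set topology requiring no paths, no Cauchy estimates, and no completeness. \refT{T4} then finishes.

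Your route does yield more: by exhibiting the uniform limit of the reparametrized geodesics $\psi_n$ you show directly that $\tT$ is pathwise (indeed geodesically) connected, not merely connected. The Cauchy estimate you sketch is genuine --- it is the Busemann convexity of the metric in a real tree, giving $d\bigpar{\psi_m(t),\psi_n(t)}\le (1-t)\,d(x_m,x_n)+t\,d(y_m,y_n)$ --- though your write-up leaves this as an assertion rather than a computation. So nothing is wrong, but the ``main obstacle'' you flag is in fact a one-liner, and the honest Cauchy argument you supply, while instructive, is superfluous for the theorem as stated.
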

\begin{proof}
  By continuity, if the \FPC{} \eqref{4point} holds in the dense subset
  $T$ of $\tT$, then it holds in $\tT$.
Furthermore, since $T$ is connected, so is $\tT$.
hence, $\tT$ is a real tree by \refT{T4}.
\end{proof}

\begin{remark}\label{Rcomplete}
  By \refT{Tcomplete}, 
we may in many situations assume without loss of generality that 
real trees are complete,
since we can replace an arbitrary real tree  by its completion.
\end{remark}

The \FPC{} \eqref{4point} 
can be rewritten in several ways.
Define, for three points $x,y,z$ in a general metric space $(X,d)$,
the \emph{Gromov product}
\begin{align}\label{ak4}
  (x,y)_z:=\tfrac12\bigpar{d(z,x)+d(z,y)-d(x,y)}.
\end{align}
Note that $(x,y)_z\ge0$ by the triangle inequality, and that
\eqref{ak4} meaures how far the triangle inequality is from being an
equality.
Note also that in a real tree, \refL{Lddd} shows that $(x,y)_z=\ddd(z,x,y)$,
which equals the distance from $z$ to $[x,y]$.

\begin{lemma}\label{L4=}
The \FPC{} \eqref{4point} is equivalent to
\begin{align}\label{4pointa}
  (x,y)_w \ge 
(x,z)_w\bmin(y,z)_w
.\end{align}
\end{lemma}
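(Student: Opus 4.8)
The plan is to verify directly that the four-point inequality \eqref{4point} and the Gromov product inequality \eqref{4pointa} are two notational forms of the same statement, by substituting the definition \eqref{ak4} into each side. First I would fix four points $x,y,z,w$ and expand all three Gromov products $(x,y)_w$, $(x,z)_w$, $(y,z)_w$ using \eqref{ak4}; each contains the common term $\tfrac12 d(z,w)$ only in the two that involve $z$, so I would separate the term $\tfrac12\bigpar{d(x,w)+d(y,w)}$ (common to $(x,y)_w$ and half of the others) and track the remaining pieces carefully.

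Next I would rewrite the right-hand side of \eqref{4pointa}, using that $\min\set{a,b}=-\max\set{-a,-b}$, so that $(x,z)_w\bmin(y,z)_w$ becomes $\tfrac12 d(w,z)+\tfrac12\bigpar{d(w,x)\bmin d(w,y)}-\tfrac12\bigpar{\text{something}}$ — more precisely I would pull out the common summand $\tfrac12 d(z,w)$ from both $(x,z)_w$ and $(y,z)_w$, leaving $\min$ of $\tfrac12\bigpar{d(w,x)-d(x,z)}$ and $\tfrac12\bigpar{d(w,y)-d(y,z)}$. Then multiplying the candidate inequality \eqref{4pointa} through by $2$ and cancelling the common term $\tfrac12 d(z,w)$ that appears on both sides after one moves $(x,y)_w$ over, I expect to arrive at an inequality of the form $d(x,w)+d(y,w)-d(x,y)\ge \bigpar{d(w,x)-d(x,z)}\bmin\bigpar{d(w,y)-d(y,z)} + d(z,w)$, which after using $-\min(-a,-b)=\max(a,b)$ should rearrange exactly into $d(x,y)+d(z,w)\le \bigpar{d(x,z)+d(y,w)}\bmax\bigpar{d(x,w)+d(y,z)}$, i.e.\ \eqref{4point}.

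Since every step is a reversible algebraic manipulation (expansion of definitions, the identity $\min\set{a,b}=-\max\set{-a,-b}$, and adding/subtracting common terms), the equivalence holds in both directions simultaneously; there is no genuine obstacle, only bookkeeping. The one point to be careful about is the role of the distinguished point $w$: in \eqref{4pointa} it is fixed as the base of all three Gromov products, whereas \eqref{4point} is symmetric in all four points. The clean way to handle this is to observe that \eqref{4pointa} for a fixed $w$ and arbitrary $x,y,z$ already produces \eqref{4point} for arbitrary $x,y,z,w$ after the rearrangement above — the max over the two pairings $d(x,z)+d(y,w)$ and $d(x,w)+d(y,z)$ emerges automatically from the $\min$ in \eqref{4pointa} — so no separate symmetrization is needed. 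I would present the computation as a short chain of equalities reducing \eqref{4pointa} to \eqref{4point}, noting at the end that all manipulations are reversible.
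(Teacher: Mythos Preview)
Your proposal is correct and is essentially the paper's own proof: both expand the Gromov products via \eqref{ak4} and reduce \eqref{4pointa} to \eqref{4point} by elementary reversible algebra. The only cosmetic difference is that the paper unwinds $a\ge b\bmin c$ as ``$a\ge b$ or $a\ge c$'' and simplifies each branch separately, whereas you keep the computation in one line via $\min\{a,b\}=-\max\{-a,-b\}$; the resulting inequalities are identical.
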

\begin{proof}
By the definition \eqref{ak4}, the inequality \eqref{4pointa}  holds if and
only if at least one of the following holds:
\begin{align}
  d_{x,w}+d_{y,w}-\dxy &\ge d_{x,w}+d_{z,w}-\dxz,\label{l4=a}
\\
  d_{x,w}+d_{y,w}-\dxy &\ge d_{y,w}+d_{z,w}-d_{y,z}.\label{l4=b}
\end{align}
These are equivalent to, respectively,
\begin{align}
d_{y,w}+\dxz &\ge d_{z,w}+\dxy,\label{l4=c}
\\
  d_{x,w}+d_{y,z} &\ge d_{z,w}+\dxy,\label{l4=d}
\end{align}
and thus at least one of them holds if and only iff \eqref{4point} holds.
\end{proof}

We note also that, in fact, it suffices to verify the four-point inequality
for a fixed choice of one of the four points.
\begin{lemma}\label{L4p1}
Let $T$ be a metric space and let $o\in T$ be fixed.
If the four-point inequality \eqref{4point} holds for $w=o$ and
all $x,y,z\in T$, then it holds in general, \ie, for all $x,y,z,w\in T$.
\end{lemma}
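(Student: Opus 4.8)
The plan is to use the reformulation of the four-point inequality as the Gromov-product inequality \eqref{4pointa} from \refL{L4=}, so that the hypothesis reads: $(x,y)_o \ge (x,z)_o \bmin (y,z)_o$ for all $x,y,z\in T$. The Gromov product based at a fixed point $o$ is exactly the data of a ``$0$-hyperbolic inequality'' at $o$, and the point of the lemma is that $0$-hyperbolicity at one base point propagates to all base points. So the goal is: given \eqref{4pointa} at base point $o$, deduce $(x,y)_w \ge (x,z)_w \bmin (y,z)_w$ for an arbitrary $w\in T$ and all $x,y,z$.

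First I would record the elementary identity relating Gromov products at two different base points. Writing $\inn{x}{y}:=(x,y)_o$ for brevity, a direct expansion from \eqref{ak4} gives, for any $x,y,w$,
\begin{align}
(x,y)_w = \inn{x}{y} - \inn{x}{w} - \inn{y}{w} + d(o,w),
\end{align}
since both sides equal $\tfrac12\bigpar{d(w,x)+d(w,y)-d(x,y)}$ after substituting $d(w,x)=d(o,w)+d(o,x)-2\inn{x}{w}$ etc. (These substitutions are just the definition \eqref{ak4} rearranged; I would verify the identity by a one-line calculation.) Hence for fixed $w$, the three quantities $(x,y)_w$, $(x,z)_w$, $(y,z)_w$ are obtained from $\inn{x}{y}$, $\inn{x}{z}$, $\inn{y}{z}$ respectively by subtracting the common quantity $\inn{x}{w}+\inn{y}{w}+\inn{z}{w}-d(o,w)$ and then adding back $\inn{z}{w}$, $\inn{y}{w}$, $\inn{x}{w}$ respectively. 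More simply: set $a:=\inn{x}{y}$, $b:=\inn{x}{z}$, $c:=\inn{y}{z}$, and $p:=\inn{x}{w}$, $q:=\inn{y}{w}$, $r:=\inn{z}{w}$, and $\gl:=d(o,w)$. Then $(x,y)_w = a-p-q+\gl$, $(x,z)_w=b-p-r+\gl$, $(y,z)_w=c-q-r+\gl$.

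The hypothesis applied to various triples from $\{x,y,z,w\}$ gives the four ``$0$-hyperbolic'' inequalities among $a,b,c,p,q,r$: namely that in each of the triples $(a,b,c)$ (base points $x,y,z$ in some order — these three come from the triple $\{x,y,z\}$), $(a,p,q)$, $(b,p,r)$, $(c,q,r)$, the smallest of the three is attained at least twice; equivalently, for each of these triples the minimum of the three numbers is $\le$ the second smallest with equality of the two smallest. The desired conclusion is that among $a-p-q$, $b-p-r$, $c-q-r$ (the $+\gl$ cancels since it is common), the smallest is attained twice. So the lemma reduces to a finite combinatorial statement about six real numbers $a,b,c,p,q,r$ subject to the four hyperbolicity constraints: show that $\min$ of $\{a-p-q,\ b-p-r,\ c-q-r\}$ is attained at least twice. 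I would prove this by case analysis on which of $p,q,r$ is smallest (WLOG $r=\min\{p,q,r\}$ by symmetry, noting the problem is symmetric under simultaneously permuting $(x,y,z)$), then using the constraint on $(b,p,r)$ and $(c,q,r)$ to pin down $b$ and $c$, and finally the constraint on $(a,b,c)$; the two-out-of-three-minimum structure forces the two candidates involving $r$ to tie, or else reduces to the $\{x,y,z\}$ constraint directly.

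The main obstacle is the bookkeeping in this last combinatorial step: there are several sub-cases depending on the relative order of $p,q,r$ and on which pairs achieve the minima in the hypothesis inequalities, and one must be careful that the conclusion ``minimum attained twice'' is genuinely forced in each branch. I expect that after fixing $r\le p,q$ one gets $(x,z)_w - (y,z)_w = (b-r) - (c-r) - (p-q) = (b-c)-(p-q)$ and similar expressions, and the hypothesis inequalities translate into sign conditions that, combined, leave no room for a unique strict minimum. An alternative, possibly cleaner, route I would consider is to invoke \refL{L4=} in reverse: it suffices to check \eqref{4point} for all quadruples, and \eqref{4point} for a quadruple $\{x,y,z,w\}$ only involves the six pairwise distances, so one may reduce to checking it on the (at most) four points at hand; but since \eqref{4point} for the quadruple $\{x,y,z,w\}$ is, after the base-point-change identity above, precisely inequality \eqref{4pointa} with base point $w$ — and that is what we must prove — this does not obviously short-circuit the argument, and I would fall back on the explicit six-number case analysis.
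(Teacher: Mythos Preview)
Your reduction is correct: the base-change identity $(x,y)_w = (x,y)_o - (x,w)_o - (y,w)_o + d(o,w)$ is right, and the lemma does reduce to the six-variable combinatorial statement you describe. The case analysis you sketch (normalizing $r=\min\{p,q,r\}$ and splitting on which of the alternatives $b=r$ vs.\ $p=r$ and $c=r$ vs.\ $q=r$ hold from the constraints on $(b,p,r)$ and $(c,q,r)$) does go through, though you have not actually carried it out, and it branches into several sub-cases each requiring a further appeal to the constraint on $(a,b,c)$ or $(a,p,q)$.

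The paper takes a different and cleaner route: it treats the lemma as the $\gd=0$ case of Gromov's lemma (\refL{LG2}). The key idea there is not to attack $(x,y)_w \ge (x,z)_w\bmin(y,z)_w$ directly, but first to prove from the hypothesis at $o$ the \emph{four-term} inequality
\begin{align}
  (x,y)_o+(z,w)_o \ge \bigpar{(x,z)_o+(y,w)_o}\bmin \bigpar{(x,w)_o+(y,z)_o},
\end{align}
which needs only a single WLOG (assume $(x,z)_o$ is the largest of $(x,z)_o,(x,w)_o,(y,z)_o,(y,w)_o$) and two applications of the hypothesis. One then observes---essentially by your base-change identity---that this four-term inequality is invariant under change of base point, so it holds with $o$ replaced by $w$; since $(v,w)_w=0$, this specializes immediately to the desired three-term inequality at $w$. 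This avoids the multi-branch case analysis entirely and, as a bonus, yields the $\gd$-hyperbolic version with no extra work. Your approach is sound but trades that structural shortcut for bookkeeping.
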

\begin{proof}
  By \refL{L4=}, this is the special case $\gd=0$ of \refL{LG2}.
\end{proof}

\section{Rooted real trees}\label{Sroot}

\begin{definition}
  A \emph{rooted real tree} $(T,\rho)$ is a real tree $T$ with a distinguished
  point $\rho\in T$, called the \emph{root}.
\end{definition}

In a rooted real tree $(T,\rho)$, we may define a partial order by
\begin{align}\label{<}
  y \le x \iff y \in[\rho,x],
\qquad x,y\in T.
\end{align}

\begin{theorem}\label{Tpartial}
Let $(T,\rho)$ be a rooted real tree. Then \eqref{<} defines a partial order
in $T$, with $\rho$ as the minimum element.
Moreover, any two points $x,y\in T$ have a greatest common lower bound,
which we denote by $x\land y$. 
Recalling the notation of \eqref{gam} and \refL{Lddd},
we have 
\begin{equation}
x\land y = \gam(\rho,x,y).
\end{equation}
\end{theorem}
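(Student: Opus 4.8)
The plan is to verify the three partial-order axioms directly from the definition \eqref{<}, then establish the formula $x\land y=\gam(\rho,x,y)$ and deduce from it that this element really is a greatest lower bound.

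First I would check reflexivity, antisymmetry and transitivity. Reflexivity, $x\le x$, is immediate since $x\in[\rho,x]$. For antisymmetry, suppose $y\le x$ and $x\le y$; then $y\in[\rho,x]$ and $x\in[\rho,y]$. Applying \refL{L1} to both gives $d(\rho,y)+d(y,x)=d(\rho,x)$ and $d(\rho,x)+d(x,y)=d(\rho,y)$; adding these forces $d(x,y)=0$, so $x=y$. For transitivity, suppose $z\le y\le x$, so $z\in[\rho,y]$ and $y\in[\rho,x]$; since $z,y\in[\rho,x]$ (the first because $[\rho,y]\subseteq[\rho,x]$ by \refL{L2}, as $y\in[\rho,x]$), we get $z\in[\rho,x]$, i.e.\ $z\le x$. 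That $\rho$ is the minimum is clear since $\rho\in[\rho,x]$ for every $x$, so $\rho\le x$ always.

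Next I would identify the infimum. Write $w:=\gam(\rho,x,y)$. By \refL{Lddd} (using that a real tree satisfies \ref{T2ddd} via \refT{T=}), $\gam$ is symmetric and $w\in[\rho,x]\cap[\rho,y]\cap[x,y]$. In particular $w\in[\rho,x]$ and $w\in[\rho,y]$, so $w\le x$ and $w\le y$: $w$ is a common lower bound. To see it is the greatest one, let $v$ be any common lower bound, so $v\in[\rho,x]$ and $v\in[\rho,y]$. Then $v\in[\rho,x]\cap[\rho,y]$, which by \refL{LgD} equals $[\rho,\gam(\rho,x,y)]=[\rho,w]$. Hence $v\in[\rho,w]$, i.e.\ $v\le w$. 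Therefore $w$ is the greatest common lower bound, which is what we wanted; in particular such a greatest lower bound exists and equals $\gam(\rho,x,y)$.

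I do not expect any serious obstacle here; the only mild subtlety is keeping straight which of the earlier lemmas applies where — \refL{L1} for the "distance-additive" characterization of $[\rho,\cdot]$ used in antisymmetry, \refL{L2} for $[\rho,y]\subseteq[\rho,x]$ when $y\in[\rho,x]$ (used in transitivity), and \refL{LgD} for the identity $[\rho,x]\cap[\rho,y]=[\rho,\gam(\rho,x,y)]$ which does the real work in the infimum step. One should also note explicitly that, since $T$ is a real tree, \ref{T2ddd} holds by \refT{T=}, so that \refLs{Lddd} and their symmetry conclusions are available.
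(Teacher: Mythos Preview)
Your proof is correct and follows essentially the same route as the paper's. The paper is terser on the partial-order axioms (it just cites \refLs{L2} and \ref{L22} and says ``it is easily seen''), while you spell them out using \refL{L1} for antisymmetry and \refL{L2} for transitivity; the infimum argument via \refL{LgD}'s identity $[\rho,x]\cap[\rho,y]=[\rho,\gam(\rho,x,y)]$ is exactly the paper's.
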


\begin{proof}
  It is easily seen, using \refLs{L2} and \ref{L22}, that \eqref{<}
defines a partial order.
It is obvious from \eqref{<} that $\rho\le x$ for every $x\in T$.

For any $x,y\in T$,
by the definition \eqref{<} and \eqref{lgdb},
\begin{align}
  \set{z:z\le x \text{ and } z\le y}
&=
  \set{z:z\le x}\cap
  \set{z:z\le y}
=[\rho,x]\cap[\rho,y]
\notag\\
&=[\rho,\gam(\rho,x,y)]
=   \set{z:z\le \gam(\rho,x,y)},
\end{align}
which shows that $\gam(\rho,x,y)$ is a greatest lower bound $x\land y$.
\end{proof}

For any $x,y\in T$, the path $[x,y]$ from $x$ to $y$ is a combination of the
paths $[x,x\land y]$ and $[x\land y,y]$ (where one or both parts 
may reduce to a single point). Hence, we have
\begin{align}
  [x,y]&=[x,x\land y]\cup[x\land y,y].\label{ord1}\\
d(x,y)&=d(x,x\land y)+d(x\land y,y).\label{ord2}
\end{align}

We note also that for any  subset $\set{x_\ga}_{\ga\in \cA}\subseteq T$,
it follows from  \refT{Tspanned} that
the subtree spanned by these points and the root $\rho$ is
$\bigcup_{\ga}[\rho,x_\ga]$;
see further \refEs{Esub1} and \ref{Esub2}.

\section{Leaves and branch points}\label{Sbranch}

Recall from  \refL{L0} that
the components of $\Tz$ are also the path components of $\Tz$, and that
these are open and  are called 
the \emph{branches} of $T$ at $z$.

\begin{definition}\label{Ddegree}
  Let $T$ be a real tree. 
The \emph{degree} $\deg z=\degg Tz$ of a point $z\in T$ is the number of 
branches at $z$, \ie, the number of components
of $\Tz$. Thus $1\le\deg z\le\infty$ unless $T$ consists of a single point.
\end{definition}

\begin{definition}
Let $T$ be a real tree.
We say that a point $z\in T$ is a \emph{leaf} if $\deg z=1$, 
and a \emph{branch point} if $\deg z\ge3$.

We denote the set of leaves by $T\leaves:=\set{z:\deg z\le1}$.
The \emph{skeleton} of $T$ is the set $T\skel:=T\setminus T\leaves
=\set{z\in T:\deg z\ge2}$,
i.e.,  the set of all
non-leaves of $T$.
\end{definition}

We ignore here the trivial case when $T$ consists of a single point.
(In this case, the point is defined to be a leaf, by a modification of the
definition above, and $T\skel=\emptyset$.)

We note that
\begin{align}\label{skel1}
  T\skel = \bigcup_{x,y\in T}(x,y).
\end{align}

\begin{remark}\label{Rrootleaf}
In a rooted real tree, the root is often not regarded as a leaf, even if its
degree is 1.
\end{remark}

We note also that the branches at a point  can be characterized as follows.

\begin{lemma}\label{Lbranch}
  Let $T$ be a real tree, and let $z\in T$.
Then the following are equivalent, for any $x,y\in\Tz$:
\begin{romenumerate}
    
\item\label{Lbranch1}  
$x$ and $y$ belong to different branches of $T$ at $z$.
\item \label{Lbranch3}
$z\in[x,y]$.
\item \label{Lbranch2}
$[z,x]\cap[z,y]=\set z$.
\end{romenumerate}
\end{lemma}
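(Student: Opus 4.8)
## Proof proposal for Lemma~\ref{Lbranch}

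The plan is to prove the cycle of implications
\ref{Lbranch1}$\implies$\ref{Lbranch3}$\implies$\ref{Lbranch2}$\implies$\ref{Lbranch1},
using the characterizations of $[x,y]$ already established and the fact (\refL{L0}) that the branches at $z$ are precisely the connected components of $\Tz$, which are open and pathwise connected.

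For \ref{Lbranch3}$\implies$\ref{Lbranch2}: suppose $z\in[x,y]$. By \refL{L1}, $d(x,z)+d(z,y)=d(x,y)$. If $w\in[z,x]\cap[z,y]$ then, again by \refL{L1}, $d(z,w)+d(w,x)=d(z,x)$ and $d(z,w)+d(w,y)=d(z,y)$; adding these and using the previous identity gives $2d(z,w)+d(w,x)+d(w,y)=d(x,y)\le d(w,x)+d(w,y)$, hence $d(z,w)=0$, i.e. $w=z$. So $[z,x]\cap[z,y]=\set z$. (One could alternatively invoke \refL{Lddd}: $z\in[x,y]$ means, via \refL{L5}\ref{L5a}, that $\ddd(z,x,y)=0$, and $[z,x]\cap[z,y]=[z,\gam(z,x,y)]$ by \refL{LgD}, which is $\set z$ when $\ddd(z,x,y)=0$.)

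For \ref{Lbranch2}$\implies$\ref{Lbranch1}: suppose $[z,x]\cap[z,y]=\set z$, and suppose for contradiction that $x$ and $y$ lie in the \emph{same} branch $C$ of $\Tz$, i.e. the same component of $\Tz$. Since $C$ is pathwise connected (\refL{L0}), there is a continuous $\psi\colon\oi\to C\subseteq\Tz$ with $\psi(0)=x$, $\psi(1)=y$. By \ref{T2surj} (valid since $T$ is a real tree), $\psi(\oi)\supseteq[x,y]$, so $[x,y]\subseteq\Tz$, i.e. $z\notin[x,y]$. But \refL{L1} then fails to force $z$ onto the geodesic; instead apply \ref{T2Y}: $[z,x]\subseteq[?]$—more directly, since $z\notin[x,y]$ we have $\gam(z,x,y)\ne z$ is not immediate, so argue as follows. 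Put $u:=\gam(x,y,z)\in[x,y]$; by \refL{LgD} applied with the roles arranged via symmetry of $\gam$, $[z,x]\cap[z,y]=[z,\gam(z,x,y)]$. Since $z\notin[x,y]$, \refL{L5}\ref{L5a} gives $\ddd(z,x,y)=d(z,\gam(z,x,y))>0$, so $\gam(z,x,y)\ne z$ and $[z,x]\cap[z,y]\supsetneq\set z$, contradicting the hypothesis. Hence $x$ and $y$ lie in different branches.

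For \ref{Lbranch1}$\implies$\ref{Lbranch3}: this is exactly condition \ref{T2disco} read in reverse. Suppose $z\notin[x,y]$; I must show $x$ and $y$ are in the same branch at $z$. By \refL{L1}, $[x,y]\subseteq T\setminus\set z$ since $z\notin[x,y]$, and $[x,y]$ is pathwise connected, exhibiting a path from $x$ to $y$ inside $\Tz$; hence $x$ and $y$ lie in the same component of $\Tz$, i.e. the same branch. Taking the contrapositive gives \ref{Lbranch1}$\implies$\ref{Lbranch3}.

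The only mild subtlety — the step I would treat most carefully — is \ref{Lbranch2}$\implies$\ref{Lbranch1}: one must be sure not to circularly invoke \ref{T2disco} in a way that presupposes what is being proved, and to correctly use the symmetry of $\gam$ (\refL{Lddd}) together with \refL{LgD} and \refL{L5}\ref{L5a} to convert "$z\notin[x,y]$" into "$[z,x]\cap[z,y]\ne\set z$". Once that translation is in hand, everything else is a direct application of \refL{L0}, \refL{L1}, and \ref{T2surj}.
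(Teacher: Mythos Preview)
Your proof is correct and uses the same ingredients as the paper's, which argues \ref{Lbranch1}$\iff$\ref{Lbranch3} in one line (same path-component reasoning, implicitly via \ref{T2surj}) and then notes that \ref{Lbranch3}$\iff$\ref{Lbranch2} is immediate from \refL{LgD} and \refL{L5}. Your direct distance computation for \ref{Lbranch3}$\implies$\ref{Lbranch2} using only \refL{L1} and the triangle inequality is a pleasant elementary alternative to invoking \refL{LgD} and \refL{L5}; on the other hand, your \ref{Lbranch2}$\implies$\ref{Lbranch1} step is somewhat roundabout (it re-derives $z\notin[x,y]$ and then applies \refL{LgD} and \refL{L5} anyway) and the exposition there would benefit from deleting the false starts.
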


\begin{proof}
  \ref{Lbranch1}$\iff$\ref{Lbranch3}:
$x$ and $y$ belong to the same path component of $\Tz$ if and only if
$[x,y]\subseteq \Tz$, \ie, if and only if $z\notin[x,y]$.
(Cf.~the condition \ref{T2disco}.)

\ref{Lbranch3}$\iff$\ref{Lbranch2}: 
Immediate from \refLs{LgD} and \ref{L5}.
\end{proof}

\begin{example}\label{Esub1}
  Let $(T,\rho)$ be a rooted real tree and  
let $\set{x_1,\dots,x_n}$ be a finite set of points in $T$.
By \refT{Tspanned},
the subtree $T_1$  spanned by $\set{x_1,\dots,x_n}$ and the root $\rho$ is
$\bigcup_{i=1}^n[\rho,x_i]$. 
It is easily seen that the leaves of $T_1$ are $\rho$ (but see \refR{Rrootleaf})
and the set of maximal elements of $\set{x_1,\dots,x_n}$, i.e.,
$\set{x_i:x_i\notin[\rho,x_j]\text{ for every }j\neq i}$.
Furthermore the branch points of $T_1$ are a subset of $\set{x_i\land x_j:
  i\neq j}$. The sets of leaves and branch points are thus finite.
\end{example}

\section{A metric space of compact real trees}\label{SGH}
Consider the set $\bbT$ of all compact real trees, or rather the set of all
equivalence classes under isometry of compact real trees (so that two isometric
real trees are regarded as the same).
(The set theoretic difficulties with ``all compact real trees'' are handled
in the standard way: since a compact real tree, as any compact metric space,
has cardinality at most $\mathsf c$, it suffices to consider real trees that
as sets are subsets of, for example, $\bbR$.)

The set $\bbT$ can be equipped with a metric, the 
\emph{Gromov--Hausdorff  distance}, 
which makes $\bbT$  a complete separable metric space. 
Similarly, the set $\bbT_1$ of rooted compact real trees is a complete
separable  metric space,
equipped with (a rooted version of) the Gromov--Hausdorff  distance.
See \cite{EvansPW}  for definitions and proofs;
see also  \cite[Section 7.3]{Burago} for the Gromov--Hausdorff distance for
general metric spaces.

The fact that $\bbT$ and $\bbT_1$ thus are complete separable metric spaces
(and thus Polish topological spaces) makes it possible to define random
compact real trees as random elements of one of these spaces, and a lot of
standard machinery then is available.

For noncompact real trees, one can similarly 
use the version of Gromov--Hausdorff convergence in \cite[Section 8.1]{Burago}.

\section{Some examples}
\begin{example}\label{E1}
A \emph{combinatorial tree} is a non-empty set $V$ of vertices 
(finite or infinite)
together with
a set $E$ of unorded pairs $\set{v,w}$ of vertices, such that $(V,E)$ is a
tree in the usual combinatorial sense. 
(An edge \set{vw} is often denoted $vw$ for simplicity.)

We may regard a combinatorial tree as a real tree $T$, by regarding each
edge as a copy of 
$\oi$, with the endpoints identified with the corresponding vertices in $V$.
Equivalently, we may define $T$ as the disjoint union of $V$ and one copy of
$(0,1)$ for each edge in $E$, with a suitably defined metric.
(We omit the details, and the verification that $T$ is a real tree.)
In any case, we regard $V$ as a subset of $T$.

Note that for $v,w\in V$, the distance $d(v,w)$ equals the usual distance in
a graph, i.e., the number of edges in a shortest path from $v$ to $w$.

The degree $\degg Tz$ of a vertex $z\in V$ equals the degree of $z$ in the
graph $(V,E)$; the degree of any vertex in $T\setminus V$ is 2.
In particular, 
the leaves of $T$ are precisely the leaves of the tree $(V,E)$
(i.e., the vertices in $V$ adjacent to a single edge in $E$),
and the branch points are the vertices in $V$ that have degree $\ge3$.

It is easy to see that $T$ always is complete,  that 
$T$ is separable if
and only if $V$ (and thus also $E$) is countable, and that
$T$ is compact if
and only if $V$ (and thus also $E$) is finite.  
\end{example}

\begin{example}\label{E2}
  More generally, suppose as in \refE{E1} that $(V,E)$ is a combinatorial
  tree,
and assume also that for every edge $e\in E$, we are given a real number
$\ell_e$, called the \emph{length} $e$.
We may construct a real tree $T$ as in \refE{E1}, but now for each edge
  $e$ taking an interval of length $\ell_e$.
(In particular, $\ell_e=1$ for all $e$ gives back the real tree in
\refE{E1}.)

We see again that $T$ is separable if and only if $V$ is countable.
(In one direction, note that if $D$ is a countable dense subset of $T$, then
every edge contains, in its interior, an element of $D$; hence $E$ is
countable.) 

Moreover, $T$ is compact if $V$ is finite,
but the converse does not
hold. One counterexample is an infinite star which is compact for some
(but not all) choices of edge lengths: let $V=\set{0,1,\dots}$
and  $E=\set{0i:i\ge1}$, with length $\ell_{0i}=2^{-i}$.
\end{example}

\begin{example}\label{Esub2}
As in \refE{Esub1}, let $T_1$ be the subtree of a rooted real tree that is
spanned by a finite set of points \set{x_1,\dots,x_n} and the root. 
It follows from \refE{Esub1} and \eqref{ord1}--\eqref{ord2} that
the real tree $T_1$ can be constructed as in \refE{E2} from 
a finite combinatorial tree $(V,E)$ where 
$V=\set{x_i}\cup\set{x_i\land x_j:1\le i< j\le n}\cup\set\rho$, and a
suitable set of edges $E$ with suitable lengths $\ell_e$; we omit the details.  
\end{example}

\begin{example}\label{E3}
  The \emph{infinite binary tree} is a combinatorial tree
with $V:=\bigcup_{n=0}^\infty\setoi^n$, the set of all finite strings
from $\setoi$ (including the empty string $\emptyset$); 
the edges are all pairs of the type
\set{v,v0} and \set{v,v1} for strings $v\in V$.
Let $(\ell_n)_1^\infty$ be a sequence of positive real numbers, and
let $T$ be the real tree constructed in \refE{E2} with edge lengths
defined by $\ell_{\set{v,vj}}:=\ell_{|v|+1}$, where $|v|$ is the length of the
string $v$.

No vertex in $V$ is a leaf, and thus, see \refE{E1},
the real tree $T$ has no leaf, so $T\leaves=\emptyset$ and $T\skel=T$.

The real tree $T$ is always separable, and never compact.
It is easy to see that $T$ is complete if $\sum_n\ell_n=\infty$, but not if
$\sum_n\ell_n<\infty$, since in the latter case, the sequence
$(0^n)_0^\infty=\emptyset,0,00,000,\dots$ is a Cauchy sequence without a limit.
See further the next example.
\end{example}

\begin{example}
  \label{E4}
Let $T_0$ be the infinite binary tree in \refE{E3} and assume that
$L:=\sum_n\ell_n<\infty$. 
Note that $d(\roott,z)<L$ for every $z\in T_0$.
For every $s<L$, the set \set{z\in T_0:d(\roott,z)\le s} is closed and 
contained in a finite number of edges, and thus it is compact.

Let $(z_n)\xoo$ be a Cauchy sequence in $T_0$. Then the sequence
$d(\roott,z_n)$ is a Cauchy sequence, so it converges to some limit
$d_\infty\le L$.
If the limit $d_\infty<L$, then we see that the Cauchy sequence $(z_n)$
belongs to a compact subset of $T_0$, and thus it converges.
On the other hand, if $d(\roott,z_n)\to L$, then the Cauchy sequence cannot
converge, since a limit $z$ would have to satisfy $d(\roott,z)=\lim_\ntoo
d(\roott,z_n) = L$, but no such $z$ exists in $T_0$.

Consider now the completion $T$ of $T_0$; $T$ is a real tree by
\refT{Tcomplete}, and we call $T$ a
\emph{complete infinite binary tree}.
We claim that $T\setminus T_0$ may be identified with the
set $\setoi^\infty$ of infinite strings from \setoi. In fact, if
$v=\xi_1\xi_2\cdots\in \setoi^\infty$, 
then let  $v_n:=\xi_1\cdots\xi_n\in V$ for each $n\ge0$;
we have $d(v_n,v_m)=\sum_{n<i\le m} \ell_i$ when $n\le m$, and thus $(v_n)$ is
a Cauchy sequence in $T_0$ so it has a limit in $T$ which we represent by
$v$.
We have $d(\roott,v)=L$, and thus $v\in T\setminus T_0$.

Furthermore,
if $v=\xi_1\xi_2\cdots$ and $v'=\xi'_1\xi'_2\cdots$ are elements of
$\setoi^\infty$, let $D(v,v'):=\inf\set{i:\xi_i\neq\xi'_i}$; thus
$1\le D(v,v')<\infty$ if $v\neq v'$, but  $D(v,v)=\infty$.
Further, let $\LL_n:=\sum_{i\ge n}\ell_i$; thus $\LL_1=L$, and
$\LL_n\downto0$ as \ntoo. It is then easy to see that for any
$v,v'\in\setoioo$, regarded as elements of $T$, we have
\begin{align}\label{dLL}
d(v,v')=2\LL_{D(v,v')}.  
\end{align}
In particular, this shows that two different strings in $\setoioo$ represent
different points in $T$, so we may regard $\setoioo$ as a subset of $T$.
Note also that, since $\LL_n\to0$ as \ntoo, the metric \eqref{dLL} induces
the product topology on $\setoioo$; thus $\setoioo$ is a compact subset of
$T$, homeomorphic to the Cantor set.

Finally, if $(z_n)$ is any Cauchy sequence in $T_0$ without limit in $T_0$, we
have seen that $d(\roott,z_n)\to L$, and since $\ell_k\to0$ as \ktoo,
it follows easily that we may approximate each $z_n$ by $z_n'\in V$ such
that
$d(z_n,z_n')\to0$ as \ntoo.
Then $z_n'$ is a finite string; we extend it (arbitrarily) to an infinite
string $z_n''\in\setoioo$ and note that
\begin{align}
  d(z_n',z_n'')=\LL_{|z_n'|+1}\to0.
\end{align}
Hence $d(z_n,z_n'')\to0$, and thus also $(z_n'')$ is a Cauchy sequence;
furthermore $(z_n'')$ lies in the
compact metric space $\setoioo$. Consequently $z_n''\to z$ for some
$z\in\setoioo$, and thus also $z_n\to z$. This shows that every Cauchy
sequence in $T_0$ has a limit either in $T_0$ or in $\setoioo$, and thus
$T=T_0\cup\setoioo$ as claimed above.

The complete infinite binary tree $T$ is compact; this follows either by
using a modification of the argument above to show that an arbitrary
sequence $(z_n)$ in $T$ has a subsequence that converges, or by noting that
for every $\eps>0$, there is a finite $\eps$-net in $T$, since $\setoioo$ is
compact, and so is the set $\set{z\in T:d(z,\setoioo)\ge\eps}$; we omit the
details. 

Note that  the complete infinite binary tree $T=T_0\cup\setoioo$ 
regarded as a set is the same for
every sequence $(\ell_n)\xoo$ satisfying the assumptions $\ell_n>0$ and
$\sum_n\ell_n<\infty$; furthermore, it is easily seen that
the topology of $T$ is the same for all such $(\ell_n)\xoo$.
However, the metric on $T$
depends on $(\ell_n)\xoo$, as
is seen \eg{} by \eqref{dLL}.

It is easily seen that the set of leaves $T\leaves=\setoioo$, and thus the
skeleton $T\skel=T_0$.
\end{example}

\begin{example}\label{Eg}
  Let $\ell>0$ and let
$g:\oell\to[0,\infty)$ be a non-negative continuous function defined 
on $\oell$ with $g(0)=g(1)=0$.
We define a semimetric $d=d_g$ on $\oell$ by
\begin{align}
  d(s,t):=g(s)+g(t)-2\min_{u\in[s,t]} g(u),
\end{align}
when $s,t\in\oell$ with $s\le t$ (and, of course, $d(s,t):=d(t,s)$ when $s>t$).
It is easily verified that this is a semimetric; thus, if we define an
equivalence relation on $\oell$ by $s\equiv t$ if $d(s,t)=0$, then the
quotient space $T_g:=\oell/\equiv$ is a metric space; moreover, 
it is not difficult to show that $T_g$ is connected and satisfies the
4-point inequality; thus $T_g$ is a real tree by \refT{T4}.
The quotient map $\oell\to T_g$ is continuous, and thus
$T_g$ is a compact real tree.

Note that if $s\le t$, then $s\equiv t \iff g(u)\ge g(s)=g(t)$ for every
$u\in[s,t]$. 
(Informally, we  may think of obtaining $T_g$ by
putting glue on the downside of the graph of $g$, and then compressing the
$x$-axis.)

As a simple example, a finite combinatorial tree as in \refE{E1} or \ref{E2}
can be constructed in this way by taking $g(t)$ to be the \emph{contour}
function of the tree, defined as the height (distance to the root) of a
particle that moves with unit speed along the ``outside'' of the tree,
starting and ending at the root.

In fact, every compact rooted real tree may be constructed in this way (up
to isometry) using a suitable function $g:\oi\to\ooo$ 
\cite[Remark 3.2]{LeGallMiermont}.

In applications, as in the following two examples, 
$g(t)$ is usually a random function, and then $T_g$ is a
random real tree.
If we for simplicity let $\ell$ be fixed (for example, $\ell=1$), then the map
$g\mapsto T_g$ is a continuous map from $C[0,\ell]$ to the set $\bbT_1$
of rooted
compact real trees with the Gromov--Hausdorff metric in \refS{SGH}, 
see \cite{DuqLeGall2005}.
In particular, this map is (Borel) measurable, so if $g$ is a random element
of $C[0,\ell]$, then $T_g$ is a well-defined
random element of the Polish space $\bbT_1$ of rooted compact real trees.
\end{example}

\begin{example}\label{EBrownian}
The \emph{Brownian continuum random tree},
originally constructed (in several different ways) by
\citet{AldousI,AldousII,AldousIII}, 
is the random real tree $T_\bex$ obtained by the construction in \refE{Eg}
letting $g(t)$
be a random (normalized) Brownian excursion $\bex:\oi\to\ooo$;
see \cite[Corollary 22]{AldousIII}.
(Actually, Aldous defined the Brownian continuum random tree to be 
$T_{2\bex}$ in our notation, 
but the convention has later changed to $T_\bex$; of course, the
results differ only by a scaling.)
See  \eg{} 
\cite{AldousI,AldousII,AldousIII},
\cite{Evans} and \cite{LeGall2006} 
for properties of this random real tree.
In particular, $T_\bex$ has almost surely a countably infinite number of
branch points, all of degree 3, and an uncountable number of leaves.
\end{example}

\begin{example}\label{ELevy}
 More generally, a \emph{\Levy{} tree} is a random real tree constructed as
 in \refE{Eg} letting $g$ be a random continuous fuction known as 
the \emph{height process} of a \Levy{} process (with certain conditions),
see \cite{DuqLeGall2002,DuqLeGall2005}.
In the special case when the \Levy{} process is Brownian motion, this height
process is a Brownian excursion and we obtain the Brownian continuum random
tree as in \refE{EBrownian}.

Other special cases are the
\emph{stable trees}, see \cite{LeGall2006}.
\end{example}

\begin{example}\label{Epartial}
  Let $T$ be a partially ordered set such that
  \begin{romenumerate}
  \item 
Any two elements $x,y\in T$ have a greatest lower bound $x\land y$.
  \item 
For every $x\in T$, 
the set $L_x:=\set{y\in T:y\le x}$ is linearly ordered.
  \item 
There is a
\emph{height function} $h:T\to\bbR$ such that for every $x\in T$,
the restriction $h:L_x\to\bbR$ is an order-preserving 
bijection onto an interval $(a,h(x)]$ or
$[(a,h(x)]$ for some $a\in\bbR\cup\set{-\infty}$ (where thus the interval is
$(-\infty,h(x)]$ if $a=-\infty$).
  \end{romenumerate}
It is easily seen from (i) and (ii) that $a$ in (iii) cannot depend on $x$.
Moreover, either $h(L_x)=[a,h(x)]$ for all $x$, and then $T$ has a smallest
element $o$ with $h(o)=a$, or
$h(L_x)=(a,h(x)]$ for every $x$, and then $T$ has no minimum (or minimal)
element. 

Define
\begin{align}\label{dpartial}
  d(x,y):=h(x)+h(y)-2h(x\land y),
\qquad x,y\in T.
\end{align}
It is easily seen that $d$ is a metric on $T$, which makes $T$ a real tree.
The path $[x,y]$ between two points $x,y\in T$ consist of the two parts
$[x,x\land y]$ and $[x\land y,y]$, which are subsets of $L_x$ and $L_y$,
respectively.

If $T$ is has a minimum element $o$, we choose $o$ as a root, and then 
the partial order defined in \eqref{<} is the original order.
Moreover, $h(x)=d(x,o)+a$ with $a:=h(o)$.

Conversely, 
if $(T,\rho)$ is a rooted real tree, the partial order defined in
\eqref{<} satisfies (i)--(iii) above with the height function
$h(x):=d(x,\rho)$, and the construction above  returns the original metric
on $T$.

It is easily verified that the trees constructed in \refE{Eg} are of this
type, with height function $g$
(after identifying equivalent points).
\end{example}

\begin{example}\label{Ebig}
  Let $T$ be the collection of all bounded non-empty subsets of $R$ that
  contain their supremum. Let $h(A):=\sup A$ for $A\in T$, and define a
  partial order by letting $A\le B$ if $A=(B\cap(-\infty,t])\cup\set{t}$ for
  some $t\in\bbR$ with $t\le h(B)$ (necessarily $t=h(A)$.

It is easily verified that $\le$ is a partial order, and that it satisfies
(i)--(iii) in \refE{Epartial} with the height function $h$ (with $a=-\infty$).
Hence, \eqref{dpartial} defines a metric that makes $T$ inte a real tree.

Note that this is a very large tree. Its cardinality is $2^{\mathsf c}$, and every
point in $T$ has uncountable degree (more precisely, also of cardinality
$2^{\mathsf c}$).
In particular, $T$ is not separable.

See \cite[Examples 3.18 and 3.45]{Evans} for further properties of this real
tree. 
\end{example}

\section{The length measure}\label{Slength}

Every real tree has a natural measure on it, defined as follows.
(See \eg{} \cite[2.10]{Federer} for the definition and properties of
Hausdorff measures.)
We  let $\cB(T)$ denote the collection of Borel subsets of $T$.

\begin{definition}
  Let $T=(T,d)$ be a real tree.
The \emph{length measure} $\gl$ on $T$ is the $1$-dimensional
Hausdorff measure $\hmi$ on the skeleton
$T\skel$, regarded as a Borel measure on $T$.
In other words, for a Borel set $A\in\cB(T)$, 
\begin{align}\label{lm}
  \gl(A):=\hmi(A\cap T\skel),
\end{align}
where $\hmi$ is the Hausdorff measure on $T\skel$.
\end{definition}

Note that, by definition, $\gl(T\leaves)=0$. 

In the definition \eqref{lm}, if $A$ is a Borel set in $T$,
then $A\cap T\skel$ is
a Borel subset of $T\skel$, and thus $\gl(A)$ is well defined.
If $T\skel$ is a Borel subset of $T$, or more generally 
a $\hmi$-measurable subset of $T$, 
then we can also define the length measure by
\eqref{lm} interpreting $\hmi$ as the Hausdorff measure on $T$.
In general, $T\skel$ is \emph{not} measurable
(see \refE{Enonmeas} below),
but it is in
most cases of interest (and in particular for all compact $T$) by 
\refT{Tskel} below.
Alternatively, we can always (even if $T\skel$ is not measurable)
define $\gl$ by \eqref{lm} interpreting $\hmi$ as the \emph{outer} Hausdorff
meaure on $T$.

\begin{remark}
We have here defined $\gl$ as a Borel measure.
Alternatively, we may more generally define it by \eqref{lm} for every
$A\subseteq T$ such that $A\cap T\skel$ is a $\hmi$-measurable subset of
$T\skel$. 
\end{remark}

We note some elementary properties of $\gl$, which justify the name length
measure. 
Note that for every $x,y\in T$, the set $[x,y]$ is isometric to the 
interval $[0,d(x,y)]\subset\bbR$, and is thus compact.
Hence, $[x,y]$ and $(x,y)=[x,y]\setminus\set{x,y}$ are Borel sets in $T$.

\begin{theorem}\label{TL1}
The length measure $\gl$ is a continuous measure on $T$, i.e., 
  if $x\in T$, then $\gl\set x=0$.
Moreover, if $x,y\in T$, then
\begin{align}\label{tl1}
  \gl([x,y])=\gl((x,y))=d(x,y).
\end{align}
\end{theorem}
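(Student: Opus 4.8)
The plan is to establish everything by reducing to the one-dimensional picture on the segment $[x,y]$. First I would recall the basic fact about Hausdorff measure on the real line: for an interval $I\subseteq\bbR$ one has $\hmi(I)=\abs I$ (the length), and more generally $\hmi$ on $\bbR$ coincides with Lebesgue measure. Since an isometry preserves Hausdorff measure of every dimension, and since $[x,y]$ is by \ref{T1} isometric to the interval $[0,d(x,y)]\subset\bbR$ via $\gfxy$, it follows immediately that $\hmi([x,y])=d(x,y)$, where here $\hmi$ denotes the $1$-dimensional Hausdorff measure computed in the metric space $T$ (Hausdorff measure is intrinsic — it depends only on the metric restricted to the set in question, so it does not matter whether we compute it in $[x,y]$, in $T$, or in $T\skel$).

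Next I would deal with the skeleton. For the single point: $\set x$ has $\hmi(\set x)=0$ trivially (a point has $\hmi$-measure zero), so $\gl(\set x)=\hmi(\set x\cap T\skel)\le\hmi(\set x)=0$, giving continuity. For the segment: I claim $(x,y)=[x,y]\setminus\set{x,y}\subseteq T\skel$. Indeed, by \eqref{skel1} we have $T\skel=\bigcup_{u,v\in T}(u,v)$, and in particular every point of $(x,y)$ lies in $(x,y)$, hence in $T\skel$. Therefore
\begin{align*}
  \gl((x,y))=\hmi((x,y)\cap T\skel)=\hmi((x,y))=\hmi([x,y])=d(x,y),
\end{align*}
where the third equality uses that removing the two endpoints changes $\hmi$ by at most $\hmi(\set x)+\hmi(\set y)=0$. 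Finally, $\gl([x,y])=\hmi([x,y]\cap T\skel)$ lies between $\gl((x,y))=d(x,y)$ and $\hmi([x,y])=d(x,y)$ (since $(x,y)\subseteq[x,y]\cap T\skel\subseteq[x,y]$), so $\gl([x,y])=d(x,y)$ as well, establishing \eqref{tl1}.

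The only genuinely nontrivial ingredient is the identity $\hmi(I)=\abs I$ for intervals in $\bbR$, i.e.\ that $1$-dimensional Hausdorff measure equals Lebesgue measure on the line; this is standard (see the reference to \cite{Federer} already cited) and I would simply invoke it. The potential subtlety I would be careful about is the measurability issue flagged in the text: $T\skel$ need not be Borel in general, so $\gl$ is defined via $\hmi$ on the metric space $T\skel$ (or via outer Hausdorff measure on $T$). But this causes no trouble here, because $(x,y)$ and $[x,y]$ are themselves compact, hence Borel, hence $\hmi$-measurable, and $(x,y)$ is entirely contained in $T\skel$; so all the sets whose $\gl$-measure we need are honest measurable subsets of $T\skel$ and the sandwiching argument above goes through without any appeal to measurability of $T\skel$ itself. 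Thus the proof is essentially a two-line reduction once the Hausdorff-measure-equals-length fact on $\bbR$ is granted.
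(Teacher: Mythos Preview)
Your argument is correct and follows essentially the same route as the paper: reduce to the isometry $\gfxy$ with a real interval, use that $\hmi$ on $\bbR$ is Lebesgue measure, and note $(x,y)\subseteq T\skel$. One tiny slip: you call $(x,y)$ compact, which it is not; but it is Borel (as $[x,y]\setminus\set{x,y}$), so nothing in the argument is affected.
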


\begin{proof}
  For every $x\in T$, we have $\gl\set x=0$ by \eqref{lm}.

Hence, for every $x,y\in T$, $\gl([x,y])=\gl((x,y))$.
Furthermore, $(x,y)$ is a subset of $T\skel$ isometric to 
the interval $(0,d(x,y))\subset\bbR$, and thus
$\hmi((x,y))=\hmi((0,d(x,y))=d(x,y)$, 
since the Hausdorff measure $\hmi$ on $\bbR$ equals the 
Lebesgue measure.
\end{proof}

When $T$ is separable, we can say more.

\begin{theorem}\label{Tskel}
Suppose that $T$ is a separable real tree.
\begin{romenumerate}
\item\label{Tskel1} 
Then $T\leaves$ and $T\skel$ are Borel subsets of $T$.

\item\label{Tskel3} 
The length measure $\gl$ is $\gs$-finite.  

\item \label{Tskel2}
The length measure is the unique Borel measure $\gl$ on $T$ with
\begin{align}\label{ts}
  \gl([x,y])=d(x,y),
\qquad x,y\in T,
\end{align}
and $\gl(T\leaves)=0$.
\end{romenumerate}
\end{theorem}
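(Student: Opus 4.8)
The plan is to prove the three parts in order, using separability crucially to reduce everything to countable unions.

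For part \ref{Tskel1}, the key observation is that in a separable real tree, the skeleton can be written as a countable union. First I would fix a countable dense subset $D\subseteq T$. I claim that $T\skel=\bigcup_{x,y\in D}(x,y)$. The inclusion $\supseteq$ is immediate from \eqref{skel1}. For $\subseteq$, suppose $z\in T\skel$, so $\deg z\ge 2$; pick $x,y\in\Tz$ in different branches at $z$, so $z\in(x,y)$ by \refL{Lbranch}. Using density of $D$ and local pathwise connectedness (\refL{L0}), one approximates $x$ by some $x'\in D$ lying in the same branch of $T$ at $z$ as $x$, and similarly $y'\in D$ in the branch of $y$; then $z\in(x',y')$ by \refL{Lbranch} again. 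Since each $(x,y)$ with $x,y\in D$ is a Borel set (isometric to an open interval, as noted before \refT{TL1}), $T\skel$ is a countable union of Borel sets, hence Borel; and $T\leaves=T\setminus T\skel$ is then Borel too.

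For part \ref{Tskel3}, using the representation just obtained, $T\skel=\bigcup_{x,y\in D}[x,y]$ up to the (countably many) endpoints, and $\gl([x,y])=d(x,y)<\infty$ by \refT{TL1}. Since $D$ is countable, this exhibits $T\skel$ as a countable union of sets of finite $\gl$-measure, and $\gl(T\leaves)=0$, so $\gl$ is $\gs$-finite on all of $T$.

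For part \ref{Tskel2}, I would argue that any Borel measure $\mu$ with $\mu([x,y])=d(x,y)$ for all $x,y$ and $\mu(T\leaves)=0$ must coincide with $\gl$. First, $\mu$ is continuous: $\mu(\set x)=\lim_{y\to x}\mu([x,y])=0$ (or directly, $\mu(\set x)\le\mu([x,y])=d(x,y)\to 0$), so $\mu([x,y])=\mu((x,y))$, and likewise for $\gl$ by \refT{TL1}. Next, the sets $(x,y)$ with $x,y\in D$ form a generating family for the Borel $\gs$-algebra of $T\skel$ which is closed under finite intersections --- here I need that $(x,y)\cap(x',y')$ is again of this form (or a finite union of such, using that intersections of subsegments of a real tree are subsegments, via \refL{L2} and \refL{LgD}) --- and both $\mu$ and $\gl$ restricted to $T\skel$ are $\gs$-finite and agree on this family. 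A standard $\pi$--$\lambda$ (Dynkin) argument then gives $\mu|_{T\skel}=\gl|_{T\skel}$. Finally $\mu(T\leaves)=0=\gl(T\leaves)$, so $\mu=\gl$ on all Borel sets. I expect the main obstacle to be the bookkeeping in part \ref{Tskel1} --- carefully producing the dense approximants $x',y'\in D$ in the correct branches at $z$ --- together with checking that the family $\set{(x,y):x,y\in D}$ (or a suitable enlargement) is a genuine $\pi$-system generating the Borel sets of $T\skel$, which is what makes the uniqueness argument in \ref{Tskel2} go through.
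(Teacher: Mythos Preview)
Your arguments for \ref{Tskel1} and \ref{Tskel3} are essentially the paper's: fix a countable dense $D$, show $T\skel=\bigcup_{x,y\in D}(x,y)$, and read off both Borel measurability and $\sigma$-finiteness. (You in fact supply the details behind the paper's ``easy to see'' for this identity.)

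For \ref{Tskel2} your strategy --- show $\mu$ is continuous, then run a Dynkin $\pi$--$\lambda$ argument on intervals --- is the right one, and is what the paper does, but your proposed $\pi$-system $\set{(x,y):x,y\in D}$ does not work as stated, for two reasons. First, it is not closed under intersections: in a real tree the intersection $[x,y]\cap[x',y']$ is a single interval $[a,b]$ (possibly degenerate or empty), but its endpoints $a,b$ are meeting points such as $\gam(x,y,x')$ and need not lie in $D$; so the family is not a $\pi$-system, and ``finite unions'' do not help. Second, even after enlarging to all intervals, the uniqueness-of-measures theorem for $\sigma$-finite measures requires the $\pi$-system to contain an exhausting sequence $E_n\upto T\skel$ of finite-measure sets, and no single interval can exhaust $T\skel$ unless the tree is itself an interval.

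The paper resolves both issues at once by localizing: it enumerates a dense set $x_0,x_1,\dots$ in $T\skel$, sets $T_n:=\bigcup_{i\le n}[x_0,x_i]$, and works on each $T_n$. An easy induction (writing $T_n=T_{n-1}\cup(y_n,x_n]$ disjointly) gives $\mu(T_n)=\gl(T_n)<\infty$, so both measures are finite on $T_n$. The family $\cA_n:=\set{[x,y]:x,y\in T_n}\cup\set\emptyset$ is now a genuine $\pi$-system (since $T_n$ is a subtree, the endpoints of any intersection lie in $T_n$) generating $\cB(T_n)$, and $\mu=\gl$ on $\cA_n$; hence $\mu=\gl$ on $\cB(T_n)$ by the monotone class theorem. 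Letting $n\to\infty$ with $T_n\upto T\skel$ gives $\mu=\gl$ on $T\skel$, and both vanish on $T\leaves$. The localization to $T_n$ is exactly the ``suitable enlargement'' you anticipate needing; without it the argument has a gap.
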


\begin{proof}
\pfitemref{Tskel1}
  Let $D$ be a countable dense subset of $T$.
It is easy to see that then, \cf{} \eqref{skel1},
\begin{align}\label{skel11}
  T\skel = \bigcup_{x,y\in D}(x,y).
\end{align}
Furthermore, as noted above, $(x,y)=[x,y]\setminus\set{x,y}$ is a Borel set
for any $x$ and $y$. Hence, $T\skel$ is Borel,
and thus so is its complement $T\leaves$.

\pfitemref{Tskel3}
This follows from \eqref{skel11}, since 
$\gl$ is concentrated on $T\skel$ by \eqref{lm},
and $\gl((x,y))<\infty$ for each $(x,y)$ by \eqref{ts}.

\pfitemref{Tskel2}
We have already shown \eqref{ts} in \refT{TL1}, and $\gl(T\leaves)=0$
follows directly from \eqref{lm}.

To show uniqueness, suppose that $\gl'$ is another Borel measure on $T$ with
$\gl'([x,y])=d(x,y)$ for all $x,y\in T$, and $\gl'(T\leaves)=0$.
Note first that then $\gl'\set{x}=0$ for every $x\in T$, and thus for all
$x,y\in T$, we have, by the assumption and \eqref{ts},
\begin{align}\label{ts4}
  \gl'((x,y))=\gl'((x,y])=\gl'([x,y])
=
  \gl((x,y))=\gl((x,y])=\gl([x,y])
<\infty
.\end{align}

We may assume that $T\skel\neq\emptyset$.
Let $x_0,x_1,\dots$ be a dense subset of $T\skel$, and let, for $n\ge1$,
\begin{align}\label{ts2}
  T_n:=\bigcup_{i=1}^n[x_0,x_i]\subset T\skel
.\end{align}
$T_n$ is a pathwise connected subset of $T$, and thus $T_n$ is a real tree
by \refT{Tsubtree}. (In fact, by \refT{Tspanned}, $T_n$ is the subtree spanned
by $x_0,\dots,x_n$.)
We consider the restrictions of $\gl$ and $\gl'$ to the compact (and thus
Borel) subset $T_n$.

First, for every $n\ge2$ we have $[x_0,x_n]\cap T_{n-1}=[x_0,y_n]$ for some 
$y_n\in T_{n-1}$, and then $T_n=T_{n-1}\cup (y_n,x_n]$ is a partition into
two disjoint Borel subsets; hence, induction and \eqref{ts4} yield
\begin{align}\label{ts6}
  \gl'(T_n)=\gl(T_n)<\infty,
\qquad n\ge1.
\end{align}
It now follows by the monotone class theorem 
(see \eg{} \cite[Theorem 1.2.3]{Gut})
that 
\begin{align}\label{ts7}
  \gl'(A)=\gl(A),
\qquad A\in\cB(T_n),
\end{align}
because 
it follows from \eqref{ts6} that the collection
$\cD_n:=\set{A\in\cB(T_n):\gl'(A)=\gl(A)}$ is a Dynkin system,
the collection $\cA_n:=\set{[x,y]:x,y\in T_n}\cup\emptyset$
is a $\pi$-system (i.e., closed under finite intersections)
that generates $\cB(T_n)$,
and $\cA_n\subseteq\cD_n$ by \eqref{ts4}.

Now let $n\to\infty$; then $T_n\upto \bigcup_1^\infty T_n=T\skel$.
Hence it follows from \eqref{ts7} that for every $A\in\cB(T)$,
\begin{align}\label{ts8}.
  \gl'(A\cap T\skel)=\lim_\ntoo \gl'(A\cap T_n)
=\lim_\ntoo \gl(A\cap T_n) = \gl(A\cap T\skel).
\end{align}
Finally, by assumption and definition $\gl'(A\cap T\leaves)=\gl(A\cap
T\leaves)=0$, 
and thus \eqref{ts8} yields
\begin{align}\label{ts9}.
\gl'(A)=  \gl'(A\cap T\skel)
= \gl(A\cap T\skel)
=\gl(A),
\qquad A\in\cB(T).
\end{align}
\end{proof}

\begin{example}\label{Edimension}
  Let $T$ be the complete infinite binary tree in \refE{E4}, with
  $\ell_n:=2^{-\gam n}$ for some $\gam>0$.
It follows from \eqref{dLL} that for every $k\ge0$, $T\leaves=\setoioo$ can be
partitioned into $2^k$ disjoint balls of radius $c 2^{-\gam k}$,
where $c=2(1-2^{-\gam})\qq$.
It follows by standard arguments (see \eg \cite[Section 1.2]{BishopPeres})
that the $(1/\gam)$-dimensional Hausdorff measure $\hmx{1/\gam}(\setoioo)$
is finite and positive, and thus $\setoioo$ has Hausdorff dimension
$1/\gam$.
(The Minkowski dimension 
\cite[Section 1.1]{BishopPeres}
of $\setoioo$ is the same.)

The Hausdorff dimension of $T\leaves=\setoioo$ 
can thus be any number in $(0,\infty)$.
We see also that $\hmi(T\leaves)=\hmi(\setoioo)>0$ when $\gam\le1$
(and $\infty$ when $\gam<1$);
this shows that in general, the length measure is not equal to 
the Hausdorff measure $\hmi$ on $T$.

We see also that the total length measure $\gl(T)=\sum_1^\infty 2^{n-\gam n}$ is 
finite for $\gam>1$ but infinite for $\gam\le1$.
\end{example}

\begin{example}\label{Enonmeas}
Consider the complete infinite binary tree $T$ in \refE{E4}, for some
sequence $(\ell_n)\xoo$, and
  let $A\subseteq T\leaves$ be an arbitrary subset of $T\leaves=\setoioo$. 
 Define the real tree  $T_A$ by attaching an interval $[x,x']$ of length $1$
to every $x\in A$ in the obvious way.

Then $T_A$ is a real tree with
\begin{align}
T_A\leaves&=\set{x\in\setoioo:x\notin A}\cup\set{x':x\in A}
\notag\\&
=\bigpar{\setoioo\setminus A}\cup\set{x':x\in A}.
\end{align}
Suppose that $T_A\leaves $ is a Borel set in $T_A$. Then so is
\begin{align}
T_A\leaves\cap\set{x\in T_A:d(\roott,x)=L}
=\setoioo\setminus A.
\end{align}
Hence, $A$ then is a Borel set in $\setoioo$.
Consequently, if we choose $A$ as a subset of $\setoioo$ that is not a Borel
set, then $T_A\leaves$ and its complement $T_A\skel$ are not Borel sets in
$T_A$. 

If we specialize to the case $\ell_n=2^{-n}$, then (see \refE{Edimension})
$\setoioo$ has finite and positive Hausdorff measure $h:=\hmi(\setoioo)$.
Since $\setoioo$ is a Polish space, and $\hmi$ is continuous and with full
support,
it follows that the measure space
$(\setoioo,\hmi)$ is isomorphic to $(\oi,h\nu)$, where $\nu$ denotes the
Lebesgue measure; this holds both for the Borel \gsf{s} and their
completions,
which are the $\hmi$-measurable sets in $\setoioo$ and the Lebesgue
measurable sets in $\oi$, respectively.
Hence, there exists $A\subset\setoioo$ such that $A$ is not measurable for
$\hmi$, and it follows that $T_A\leaves$ and $T_A\skel$ are not measurable
for the Hausdorff measure $\hmi$.
\end{example}

\section{Leaf measure}\label{Sleafm}

In some applications, a real tree is equipped with a different (Borel) measure,
which, in contrast to the length measure in \refS{Slength}, is supported on
the set of leaves $T\leaves$. For simplicity, suppose that $T$ is a
separable real tree, so that $T\leaves$ is a Borel set by \refT{Tskel}.
We then may call any Borel measure supported on $T\leaves$ a \emph{leaf
  measure}. 
Note that a leaf measure thus has to be specified, and is not automatically
determined by $T$, unlike the length measure in \refS{Slength}.

Usually, one considers leaf measures that are probability measures; they
thus give a meaning to ``a random leaf''.

\citet{AldousIII} defines a \emph{continuum tree} as a rooted real tree
(with some extra conditions) equipped with a nonatomic
probability measure that is 
supported on the set of leaves (and thus a leaf measure in our sense), and 
furthermore has full support in the sense that for any $x$ in the skeleton
of the tree, the set $\set{y:y>x}$ (recall \eqref{<}) has positive measure.

\begin{example}
  If $T$ is constructed from a finite combinatorial tree as in \refE{E1} or
  \ref{E2}, then $T$ has a finite number of leaves, and a natural leaf
  measure is given by the uniform distribution on $T\leaves$.
\end{example}

\begin{example}
  If $T$ is constructed from a continuous function $g:[0,\ell]\to\ooo$ as in
  \refE{Eg}, then the natural (quotient) mapping $[0,\ell]\to T$ is
  continuous, and thus measurable, so it maps the Lebesgue measure on $[0,\ell]$
to a measure $\mu$ on $T$. This is in general not a leaf measure
(one counterexample is when $g$ is the contour function of a finite
combinatorial tree as in \refE{E1} or \ref{E2}; then the measure $\mu$
is 2 times the length measure on $T$, as is easily seen).
However, $\mu$ is a leaf measure under some conditions
\cite[Theorem 13]{AldousIII};
in particular, $\mu$ is 
almost surely 
a leaf measure
when, as in \refE{EBrownian}, $g=\bex$, the (random) standard
Brownian excursion;
see \cite[Corollary 22]{AldousIII}. 
\end{example}

\appendix
\section{Gromov hyperbolic spaces}
This appendix is a long remark on an approximate version of the \FPC,
which is important in other contexts; 
the appendix can be skipped by those interested in trees only.
$X=(X,d)$ is a metric space.

\citet{Gromov} 
defined \emph{$\gd$-hyperbolic metric spaces} as metric spaces
where the four-point inequality holds up to an error $\gd$.
More precisely, he used the version \eqref{4pointa}, and the definition is
as follows, for a given $\gd\ge0$:
\begin{definition}[Gromov]\label{DGromov}
  A metric space $X=(X,d)$ is  $\gd$-hyperbolic if, for all $x,y,z,w\in X$,
\begin{align}\label{4gda}
  (x,y)_w \ge 
{(x,z)_w\bmin(y,z)_w}
-\gd
.\end{align}
\end{definition}

By \refL{L4=}, \refT{T4+} can be formulated as follows.
\begin{theorem}
  \label{T4G}
A metric space is $0$-hyperbolic if and only if it can be isometrically
embedded in a real tree.
\end{theorem}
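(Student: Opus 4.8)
The plan is to obtain \refT{T4G} as an immediate reformulation of \refT{T4+} via \refL{L4=}, so the whole proof is a chain of three equivalences. First I would specialize \refD{DGromov} to the case $\gd=0$: the defining inequality \eqref{4gda} then reads $(x,y)_w \ge (x,z)_w\bmin(y,z)_w$, which is exactly \eqref{4pointa}. Hence a metric space $X$ is $0$-hyperbolic if and only if \eqref{4pointa} holds for all $x,y,z,w\in X$. Next I would invoke \refL{L4=}, which asserts that \eqref{4pointa} is equivalent to the \FPC{} \eqref{4point}; since the lemma is a pointwise statement, quantifying over all quadruples $x,y,z,w$ on one side corresponds to quantifying over all quadruples on the other. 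Finally, \refT{T4+} states that \eqref{4point} holds for all four points of $X$ precisely when $X$ can be isometrically embedded into a real tree. Composing these three equivalences gives the theorem.

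There is essentially no obstacle here: the argument is pure bookkeeping. The only points worth spelling out are (a) that setting $\gd=0$ genuinely turns \eqref{4gda} into \eqref{4pointa} verbatim, and (b) that the universal quantifier in \refD{DGromov} matches the universal quantifier in the hypothesis of \refT{T4+} after the translation supplied by \refL{L4=}. Indeed, the entire mathematical content of \refT{T4G} already resides in \refT{T4+}; \refL{L4=} merely re-expresses the four-point inequality in the Gromov-product form used to define $\gd$-hyperbolicity. Accordingly I would keep the written proof to one or two sentences, simply citing \refLs{L4=} and \ref{T4+} and noting the $\gd=0$ specialization.
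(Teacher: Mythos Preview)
Your proposal is correct and matches the paper's approach exactly: the paper simply introduces \refT{T4G} with the sentence ``By \refL{L4=}, \refT{T4+} can be formulated as follows,'' treating it as an immediate reformulation via the $\gd=0$ specialization of \refD{DGromov}. Your slightly more explicit chain of equivalences is precisely what that sentence encodes.
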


However, the main interest of \refD{DGromov} is for $\gd>0$.
The value of $\gd$ is often not important, and one says that a metric space
$X$ is 
\emph{Gromov hyperbolic} if it is $\gd$-hyperbolic for some $\gd<\infty$.

\begin{remark}
Intuitively, at large distances, the $\gd$ in \eqref{4gda} is insignificant,
and thus the large scale geometry of a Gromov hyperbolic space is
``tree-like''.
\end{remark}

There are several alternatives to \refD{DGromov},
with conditions that are equivalent in the sense that if one holds, then so
do the others, 
but with $\gd$ replaced by $C\gd$ for some (small) constant $C$.
(The conditions are in general not equivalent with the same $\gd$.)
Some of these alternative conditions are given (or implicit)
in the following lemmas.
See further \eg{} \cite{Gromov}, \cite{Burago} and \cite{Vaisala}.

\begin{lemma}\label{LG1}
  The inequality \eqref{4gda} holds if and only if
\begin{align}\label{4pointgd}
  d(x,y)+d(z,w) \le \bigpar{d(x,z)+d(y,w)}\bmax\bigpar{d(x,w)+d(y,z)} +2\gd.
\end{align}
\end{lemma}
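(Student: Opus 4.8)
The plan is to recognize that Lemma~\ref{LG1} is the ``$\gd$-fattened'' analogue of \refL{L4=}, and the proof should be a near-verbatim rerun of the proof of \refL{L4=} with the slack term $2\gd$ carried along. First I would expand both sides of \eqref{4gda} using the definition \eqref{ak4} of the Gromov product at the base point $w$. The left-hand side $(x,y)_w$ is $\tfrac12(d_{x,w}+d_{y,w}-d_{x,y})$, and the quantity $(x,z)_w\bmin(y,z)_w-\gd$ is the minimum of $\tfrac12(d_{x,w}+d_{z,w}-d_{x,z})-\gd$ and $\tfrac12(d_{y,w}+d_{z,w}-d_{y,z})-\gd$. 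Thus \eqref{4gda} asserts that $(x,y)_w$ is $\ge$ \emph{at least one} of these two expressions, i.e.\ at least one of the two inequalities
\begin{align*}
  d_{x,w}+d_{y,w}-d_{x,y} &\ge d_{x,w}+d_{z,w}-d_{x,z}-2\gd,\\
  d_{x,w}+d_{y,w}-d_{x,y} &\ge d_{y,w}+d_{z,w}-d_{y,z}-2\gd
\end{align*}
holds.

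Next I would simplify each of these. The first is equivalent, after cancelling $d_{x,w}$ and rearranging, to $d_{y,w}+d_{x,z}\ge d_{z,w}+d_{x,y}-2\gd$; the second, after cancelling $d_{y,w}$, to $d_{x,w}+d_{y,z}\ge d_{z,w}+d_{x,y}-2\gd$. Written with everything on one side, these say respectively $d_{x,y}+d_{z,w}\le d_{x,z}+d_{y,w}+2\gd$ and $d_{x,y}+d_{z,w}\le d_{x,w}+d_{y,z}+2\gd$. So the disjunction ``at least one of the two holds'' is exactly the statement that $d(x,y)+d(z,w)$ is bounded by the \emph{maximum} of $d(x,z)+d(y,w)$ and $d(x,w)+d(y,z)$, plus $2\gd$ — which is precisely \eqref{4pointgd}. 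This gives the ``only if'' direction, and since every step is an equivalence, the ``if'' direction comes for free by reading the chain backwards.

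The argument is entirely routine algebra, so I do not expect a genuine obstacle; the only thing to be slightly careful about is the logical bookkeeping — the inequality $a\ge b\bmin c$ is equivalent to ``$a\ge b$ or $a\ge c$'', and $d(x,y)+d(z,w)\le u\bmax v$ is equivalent to ``$d(x,y)+d(z,w)\le u$ or $d(x,y)+d(z,w)\le v$'' — so one must make sure the two disjuncts coming from \eqref{4gda} match up with the two disjuncts coming from \eqref{4pointgd}, which they do once the cancellations above are performed. In practice I would simply mirror the four displayed equations in the proof of \refL{L4=}, inserting $-2\gd$ (resp.\ $+2\gd$) in the appropriate places, and conclude with one sentence noting that the chain of equivalences also holds with $\gd=0$, recovering \refL{L4=} as a special case.
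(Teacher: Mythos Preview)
Your proposal is correct and is essentially identical to the paper's proof, which simply says ``As the proof of \refL{L4=}, adding $2\gd$ to the \lhs{s} of \eqref{l4=a}--\eqref{l4=d}.'' You have spelled out exactly that computation in full detail, including the logical bookkeeping about disjunctions that the paper leaves implicit.
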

\begin{proof}
  As the proof of \refL{L4=}, adding $2\gd$ to the \lhs{s} of
  \eqref{l4=a}--\eqref{l4=d}. 
\end{proof}

\begin{lemma}[\cite{Gromov}]\label{LG2}
Let $X$ be a metric space and 
let $o\in X$ be fixed.
Let $\gd\ge0$.
  If \eqref{4gda} holds for $w=o$ and all $x,y,z\in X$, then it holds
for all $x,y,z,w\in X$ with $\gd$ replaced by $2\gd$.
\end{lemma}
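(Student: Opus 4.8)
The plan is to phrase everything in terms of Gromov products and to exploit the two elementary identities, valid in any metric space,
\[
(x,y)_w = (x,y)_o + (o,x)_w + (o,y)_w - d(o,w),
\qquad
(a,o)_w + (a,w)_o = d(o,w),
\]
each of which is a one-line computation from the definition \eqref{ak4}. Throughout, $w$ is an arbitrary point, fixed from now on.

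\textbf{Step 1 (moving the basepoint).} First I would show that the hypothesis — \eqref{4gda} with $w=o$, for all $x,y,z$ — applied to the triple $\{a,b,w\}$ and rewritten by means of the two identities above, yields, for all $a,b\in X$,
\[
(a,b)_w \ge (a,o)_w\wedge(b,o)_w-\gd
\qquad\text{and}\qquad
(a,o)_w \ge (a,b)_w\wedge(b,o)_w-\gd .
\]
In other words, the hyperbolic inequality \eqref{4gda} already holds at the \emph{new} basepoint $w$, with the \emph{same} constant $\gd$, for every triple one of whose three points is $o$.

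\textbf{Step 2 (chaining through $o$).} Fix $x,y,z$; we want $(x,y)_w\ge (x,z)_w\wedge(y,z)_w-2\gd$. Applying Step 1 to the pairs $\{x,y\}$, $\{x,z\}$, $\{y,z\}$ gives
\[
(x,y)_w\ge (x,o)_w\wedge(y,o)_w-\gd,\qquad
(x,o)_w\ge (x,z)_w\wedge(z,o)_w-\gd,\qquad
(y,o)_w\ge (y,z)_w\wedge(z,o)_w-\gd,
\]
and substituting the last two inequalities into the first yields
$(x,y)_w\ge (x,z)_w\wedge(y,z)_w\wedge(z,o)_w-2\gd$. If $(z,o)_w\ge (x,z)_w\wedge(y,z)_w$, the desired inequality follows at once.

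\textbf{Step 3 (the residual case), and the main obstacle.} It remains to treat $(z,o)_w<(x,z)_w\wedge(y,z)_w$. Here I would reinsert the Step 1 inequalities for $\{x,z\}$ and $\{y,z\}$ in the forms $(z,o)_w\ge (x,z)_w\wedge(x,o)_w-\gd$ and $(z,o)_w\ge (y,z)_w\wedge(y,o)_w-\gd$: since $(z,o)_w$ lies strictly below $(x,z)_w$ and $(y,z)_w$, each of these forces the relevant product, $(x,o)_w$ resp.\ $(y,o)_w$, either to stay within $\gd$ of $(z,o)_w$ or to exceed $(x,z)_w$ resp.\ $(y,z)_w$; combining this with the two cyclic versions of the Step 2 bound (obtained by permuting $x,y,z$), a short case distinction then delivers $(x,y)_w\ge (x,z)_w\wedge(y,z)_w-2\gd$ in every remaining situation. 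Keeping this last case analysis short is the only delicate point; everything else is routine. (An alternative is to pass, via \refL{LG1}, to the $2\gd$-four-point inequality for quadruples containing $o$ and to deduce the $4\gd$-four-point inequality for an arbitrary quadruple $x,y,z,w$ by relabelling so that $d(x,y)+d(z,w)$ is the largest opposite-pair sum and then applying the hypothesis to the four quadruples got by replacing one of $x,y,z,w$ by $o$; I expect this to require an equally careful but less transparent bookkeeping, so I would favour the Gromov-product argument above.)
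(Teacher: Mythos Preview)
Your Steps 1 and 2 are correct, but Step 3 cannot be closed with the tools you propose. The point is that your Step~1 inequalities --- and hence the Step~2 bound and all its cyclic variants --- use the hypothesis \emph{only for triples $\{a,b,w\}$ that contain $w$}. The one instance you never invoke is $(x,y)_o\ge(x,z)_o\wedge(y,z)_o-\gd$ for the triple $\{x,y,z\}$ itself, and that instance is indispensable. Concretely: take the five-point metric with
\[
d(w,o)=d(w,x)=d(w,y)=d(w,z)=20,\quad d(x,z)=d(y,z)=20,\quad d(x,o)=d(y,o)=38,\quad d(x,y)=d(z,o)=40.
\]
One checks directly that the hypothesis at $o$ with $\gd=1$ holds for every triple containing $w$, so all your Step~1 and cyclic Step~2 inequalities are satisfied; yet $(x,y)_w=0$ while $(x,z)_w=(y,z)_w=10$, so $(x,y)_w<(x,z)_w\wedge(y,z)_w-2\gd$. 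No ``short case distinction'' built from those ingredients can therefore succeed.

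The paper's proof avoids this by first deriving, from the hypothesis at $o$, the inequality
\[
(x,y)_o+(z,w)_o \ge \bigl((x,z)_o+(y,w)_o\bigr)\wedge\bigl((x,w)_o+(y,z)_o\bigr)-2\gd
\]
for all $x,y,z,w$; the derivation uses the hypothesis twice, once on a triple that need \emph{not} contain $w$ (namely $\{x,y,z\}$, after a symmetry reduction). Expanding the Gromov products shows this inequality is basepoint-independent, so one may simply replace $o$ by $w$ and read off the conclusion. The essential difference from your plan is that this intermediate four-point inequality already packages the missing instance of the hypothesis on $\{x,y,z\}$, whereas your chain through $o$ never does.
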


\begin{proof}
We first show that the assumption implies that, for any $x,y,z,w$,   
\begin{align}\label{gr1}
  (x,y)_o+(z,w)_o 
\ge \bigpar{(x,z)_o+(y,w)_o}\bmin \bigpar{(x,w)_o+(y,z)_o}
-2\gd
.\end{align}
To see this, we first note that both sides are symmetric in $x$ and $y$, and
also in $z$ and $w$; hence, by interchanging $(x,y)$ and/or $(z,w)$ if
necessary, we may assume that $(x,z)_o$ is the largest of the four numbers
$(x,z)_o,(x,w)_o,(y,z)_o$, $(y,w)_o$. In this case, the assumption implies
\begin{align}\label{gr2}
  (x,y)_o &\ge (x,z)_o\bmin(y,z)_o-\gd
= (y,z)_o-\gd,
\\\label{gr3}
(z,w)_o&\ge  (x,z)_o\bmin(x,w)_o-\gd
= (x,w)_o-\gd,
\end{align}
and thus
\begin{align}\label{gr4}
  (x,y)_o+(z,w)_o
\ge  (y,z)_o+(x,w)_o-2\gd,
\end{align}
verifying \eqref{gr1}.

Next, by exanding all Gromov products in \eqref{gr1} according to the
definition \eqref{ak4}, we see that \eqref{gr1} does not depend on the
choice of $o$; hence \eqref{gr1} holds for every $o,x,y,z,w\in X$.

In particular, we can choose $o=w$ in \eqref{gr1}. Since $(w,v)_w=0$ for
every $v$ by \eqref{ak4}, we obtain
\begin{align}\label{gr5}
  (x,y)_w \ge (x,z)_w\bmin(y,z)_w-2\gd, 
\end{align}
as asserted.
\end{proof}

A \emph{geodesic} in a metric space $X$ is an isometric curve, \ie, an isometric
mapping of an interval $I\subseteq\bbR$ into $X$.
If a geodesic $\gf$
is defined on an interval $I=[a,b]$ that is closed and finite, we
say that the geodesic has the endpoints $\gf(a)$ and $\gf(b)$, and that it
\emph{joins} $\gf(a)$ and $\gf(b)$. 
(Then, the geodesic necessarily has length $\dxy$,
and we may choose $I=\oxy$.)

Note that condition \ref{T1} says that for every $x,y\in X$, there is  a
unique geodesic from $x$ to $y$ (provided we normalize $I=\oxy$).

More generally,
a metric space $X=(X,d)$ is \emph{geodesic}, if every pair $x,y\in X$
is joined by a geodesic.
In other words, we assume the existence part of \ref{T1}, but
uniqueness is not assumed. In particular, every real tree is geodesic.

Gromov hyperbolicity is often studied for geodesic metric spaces.
This can be done without real loss of generality by the following result
by \citet{BS}, to which we refer for a proof.

\begin{theorem}[{\citet[Theorem 4.1]{BS}}]\label{TBS}
Let $\gd\ge0$.
  A metric space is $\gd$-hyperbolic if and only if it can be isometrically
  embedded into a $\gd$-hyperbolic complete geodesic metric space.
\nopf
\end{theorem}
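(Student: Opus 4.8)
The plan is to prove the substantive (``only if'') implication by embedding $X$ isometrically into its injective hull, which is automatically complete and geodesic, and then checking that the injective hull preserves $\gd$-hyperbolicity with the \emph{same} constant $\gd$; the converse implication is immediate, since if $X$ embeds isometrically into a $\gd$-hyperbolic space then the four-point inequality \eqref{4pointgd} holds for any four points of $X$ (it involves only their mutual distances), so $X$ is $\gd$-hyperbolic by \refL{LG1}. (If one wished, one could first pass to the completion of $X$, which remains $\gd$-hyperbolic by the limiting argument used in the proof of \refT{Tcomplete}; but this turns out not to be needed.)

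For the main direction I would recall the \emph{injective hull} $E(X)$ of a metric space $X$: the set of \emph{extremal} functions $f\colon X\to\ooo$, meaning that $f(a)+f(b)\ge d(a,b)$ for all $a,b\in X$ and $f(a)=\sup_{b\in X}\bigpar{d(a,b)-f(b)}$ for every $a$ (the latter expressing that $f$ is pointwise minimal among functions satisfying the former), with the metric $d_\infty(f,g):=\sup_{a\in X}\abs{f(a)-g(a)}$ (finite on $E(X)$) and the isometric embedding $x\mapsto d(x,\cdot)$. I would then invoke the classical facts that $E(X)$ is hyperconvex and that hyperconvex metric spaces are complete and geodesic, so that $E(X)$ is a complete geodesic metric space containing an isometric copy of $X$. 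The one computational input I would establish is the identity, valid for $f,g\in E(X)$,
\begin{equation}\label{TBShull}
  d_\infty(f,g)=\sup_{a,b\in X}\bigpar{d(a,b)-f(a)-g(b)}.
\end{equation}
Here ``$\ge$'' follows from $f(a)+f(b)\ge d(a,b)$, while ``$\le$'' follows by expanding a supremum in the definition of $f(a)$ (or of $g(a)$) at a point nearly realizing $d_\infty(f,g)$.

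With \eqref{TBShull} available, the proof that $E(X)$ is $\gd$-hyperbolic is short. Let $f_1,f_2,f_3,f_4\in E(X)$; by \refL{LG1} it suffices to verify \eqref{4pointgd} for them. Fix $\eps>0$ and use \eqref{TBShull} to choose $a,b\in X$ with $d(a,b)-f_1(a)-f_2(b)\ge d_\infty(f_1,f_2)-\eps$ and $c,e\in X$ with $d(c,e)-f_3(c)-f_4(e)\ge d_\infty(f_3,f_4)-\eps$. Apply the inequality \eqref{4pointgd} in $X$ (valid by \refL{LG1}, since $X$ is $\gd$-hyperbolic) to the quadruple $a,b,c,e$, and assume by symmetry that the maximum on its right-hand side is $d(a,c)+d(b,e)$; then
\begin{align*}
  d_\infty(f_1,f_2)+d_\infty(f_3,f_4)
  &\le d(a,b)-f_1(a)-f_2(b)+d(c,e)-f_3(c)-f_4(e)+2\eps
  \\
  &\le d(a,c)+d(b,e)+2\gd-f_1(a)-f_2(b)-f_3(c)-f_4(e)+2\eps
  \\
  &= \bigpar{d(a,c)-f_1(a)-f_3(c)}+\bigpar{d(b,e)-f_2(b)-f_4(e)}+2\gd+2\eps
  \\
  &\le d_\infty(f_1,f_3)+d_\infty(f_2,f_4)+2\gd+2\eps,
\end{align*}
the last step again by \eqref{TBShull}. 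If instead the maximum is $d(a,e)+d(b,c)$, the identical computation gives the bound $d_\infty(f_1,f_4)+d_\infty(f_2,f_3)+2\gd+2\eps$. Either way, letting $\eps\downto0$ yields \eqref{4pointgd} for $f_1,\dots,f_4$, so $E(X)$ is $\gd$-hyperbolic; since $X\hookrightarrow E(X)$ isometrically, the theorem follows.

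I expect the real work to lie not in the estimate above but in the two classical inputs: setting up the injective hull and proving that it is hyperconvex (hence complete and geodesic), and establishing the identity \eqref{TBShull}. Once these are in place, preservation of the \emph{exact} constant $\gd$ is just the few lines above --- note that preserving $\gd$ only up to a universal multiplicative factor would be considerably easier but would not give the statement as phrased. As a sanity check, when $\gd=0$ the argument says that $E(X)$ is a real tree whenever $X$ is $0$-hyperbolic, in agreement with \refT{T4G}. For a complete and careful treatment along these lines, see \citet[Theorem 4.1]{BS}.
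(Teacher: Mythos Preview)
The paper does not give a proof of this theorem: it is stated with attribution to \citet[Theorem 4.1]{BS} and closed with the no-proof symbol, so the reader is simply referred to the source. Your proposal therefore supplies what the paper deliberately omits.

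Your injective-hull argument is correct. The identity $d_\infty(f,g)=\sup_{a,b\in X}\bigl(d(a,b)-f(a)-g(b)\bigr)$ for extremal functions holds for the reasons you sketch (one inequality from $g(a)+g(b)\ge d(a,b)$, or equally from $f(a)+f(b)\ge d(a,b)$; the other from extremality of $f$ or $g$), and the short computation transferring \eqref{4pointgd} from $X$ to $E(X)$ is valid and preserves the constant $\gd$ exactly --- each distance on the left is approximated from below via the identity, each on the right bounded from above by it. The background facts you invoke (that $E(X)$ is hyperconvex, hence complete and geodesic) are standard, though nontrivial to set up from scratch. This route via the injective hull is a recognised alternative to the construction in \cite{BS}; it has the pleasant feature that the case $\gd=0$ visibly specialises to the tight-span construction behind \refT{T4+}, cf.\ \cite{Dress,DressMoultonT}.
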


In particular, for $\gd=0$, we recover \refT{T4+} (together with
\refT{Tcomplete}). 

For geodetic metric spaces, there are further conditions equivalent to
Gromov hyperbolicity.
We note first an extension of \refL{L1} to general geodesic spaces.

\begin{lemma}\label{LH3}
  Let $(X,d)$ be a geodesic metric space, and let $x,y,z\in X$.
Then $(x,y)_z=0$ if and only if there exists a geodesic from $x$ to $y$ that
contains $z$.
\end{lemma}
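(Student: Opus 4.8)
The plan is to mimic the proof of \refL{L1}, simply dropping the uniqueness part (which is the only place where \ref{T1} was used there rather than mere existence of geodesics).

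\textbf{The easy direction.} Suppose there is a geodesic $\gf\colon\oxy\to X$ with $\gf(0)=x$, $\gf(\dxy)=y$, and $\gf(s)=z$ for some $s\in\oxy$. Since $\gf$ is an isometry, $d(x,z)=s$ and $d(z,y)=\dxy-s$, so $d(x,z)+d(z,y)=\dxy$; by the definition \eqref{ak4} this says exactly $(x,y)_z=0$.

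\textbf{The other direction.} Suppose $(x,y)_z=0$, i.e.\ $d(x,z)+d(z,y)=d(x,y)$. Using that $X$ is geodesic, pick a geodesic $\gf_1\colon[0,d(x,z)]\to X$ from $x$ to $z$ and a geodesic $\gf_2\colon[0,d(z,y)]\to X$ from $z$ to $y$, and form the concatenation $\gf:=\gf_1*\gf_2\colon[0,\dxy]\to X$ as in \eqref{concat}; note the total length is $d(x,z)+d(z,y)=\dxy$, and $\gf(d(x,z))=z$. I would then repeat verbatim the estimate from the proof of \refL{L1}: for $s\le d(x,z)\le t$ one gets $d(\gf(s),\gf(t))\le\bigpar{d(x,z)-s}+\bigpar{t-d(x,z)}=t-s$ by the triangle inequality, and $\gf$ is already an isometry on each of the two subintervals; if strict inequality $d(\gf(s),\gf(t))<t-s$ held for some $0\le s\le t\le\dxy$, then $d(x,y)\le d(x,\gf(s))+d(\gf(s),\gf(t))+d(\gf(t),y)<s+(t-s)+(\dxy-t)=\dxy$, a contradiction. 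Hence $\gf$ is an isometry, i.e.\ a geodesic from $x$ to $y$, and it contains $z=\gf(d(x,z))$.

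I do not expect any real obstacle here: the argument is purely a recycling of \refL{L1}, and the only conceptual point worth stating explicitly is that the concatenation of two geodesics whose lengths sum to $d(x,y)$ is again a geodesic joining $x$ and $y$. (One should perhaps remark that when $z\in\set{x,y}$ the statement is trivial, and otherwise the displayed estimates cover all relevant ranges of $s,t$.)
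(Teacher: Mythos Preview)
Your proof is correct and is exactly what the paper intends: its own proof reads in full ``As for \refL{L1}'', and you have spelled out precisely that argument, correctly observing that the uniqueness clause of \ref{T1} is irrelevant here since only the existence of a geodesic through $z$ is claimed.
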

\begin{proof}
As for \refL{L1}.
\end{proof}

In a geodesic metric space, it is enough to assume the hyperbolicity condition 
\eqref{4gda} when the \lhs{} is 0, \ie, by \refL{LH3}, when $w$ is on a
geodesic joining $x$ and $y$.

\begin{lemma}\label{LH4}
  Let $(X,d)$ be a geodesic metric space.
  Suppose that, for every $x,y,z\in X$ and every $w$ that lies on a geodesic
  from $x$ to $y$, the inequality \eqref{4gda} holds, or equivalently
\begin{align}\label{lh4}
 (x,z)_w\bmin(y,z)_w\le \gd.
\end{align}
Then $X$ is $3\gd$-hyperbolic.
\end{lemma}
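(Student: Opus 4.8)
The plan is to verify inequality \eqref{4gda} with $\gd$ replaced by $3\gd$ --- which is precisely the assertion that $X$ is $3\gd$-hyperbolic, \refD{DGromov} --- directly for an arbitrary quadruple $x,y,z,w\in X$, invoking the hypothesis twice: once to control the distance from $w$ to a geodesic joining $x$ and $y$, and once to bound $(x,z)_w$ (or $(y,z)_w$) in terms of that distance. Recall that the hypothesis, in the equivalent form \eqref{lh4}, asserts that $(x,u)_{w'}\bmin(y,u)_{w'}\le\gd$ whenever $x,y,u\in X$ and $w'$ lies on a geodesic joining $x$ and $y$ (that \eqref{4gda} reduces to this when $w'$ is such a point uses \refL{LH3}); I will use it first with $u=w$ and then with $u=z$.

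First I would fix a geodesic $\gf\colon[0,d(x,y)]\to X$ from $x$ to $y$ and locate a ``balanced'' point on it. The function $t\mapsto (x,w)_{\gf(t)}-(y,w)_{\gf(t)}$ is continuous, equals $-(y,w)_x\le 0$ at $t=0$ (since $(x,w)_x=0$), and equals $(x,w)_y\ge 0$ at $t=d(x,y)$; so by the intermediate value theorem there is a point $m:=\gf(t_0)$ of the geodesic with $(x,w)_m=(y,w)_m$. As $m$ lies on $\gf$ we have $d(m,x)+d(m,y)=d(x,y)$, so adding the identities $2(x,w)_m=d(m,x)+d(m,w)-d(x,w)$ and $2(y,w)_m=d(m,y)+d(m,w)-d(y,w)$ and using $d(x,w)+d(y,w)=2(x,y)_w+d(x,y)$ gives $(x,w)_m+(y,w)_m=d(m,w)-(x,y)_w$; hence $(x,w)_m=(y,w)_m=\tfrac12\bigl(d(m,w)-(x,y)_w\bigr)$. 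Applying the hypothesis with $u=w$ and distinguished point $m$ (which does lie on a geodesic joining $x$ and $y$) gives $(x,w)_m\bmin(y,w)_m\le\gd$, \ie{} $d(m,w)\le (x,y)_w+2\gd$.

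Next I would apply the hypothesis once more, with the same geodesic and the same point $m$, now with $u=z$: $(x,z)_m\bmin(y,z)_m\le\gd$. We may assume $(x,z)_m\le\gd$ --- otherwise $(y,z)_m\le\gd$ and one interchanges $x$ and $y$ below, which is harmless because $m$, the sum $d(m,x)+d(m,y)$, and $(x,y)_w$ are all symmetric in $x$ and $y$. Thus $d(x,z)\ge d(m,x)+d(m,z)-2\gd$, and combining this with the triangle inequalities $d(w,x)\le d(w,m)+d(m,x)$ and $d(w,z)\le d(w,m)+d(m,z)$,
\begin{align*}
 (x,z)_w
 &=\tfrac12\bigl(d(w,x)+d(w,z)-d(x,z)\bigr)
 \le\tfrac12\bigl(d(w,x)+d(w,z)-d(m,x)-d(m,z)+2\gd\bigr)\\
 &\le\tfrac12\bigl(d(w,m)+d(w,m)+2\gd\bigr)
 =d(w,m)+\gd\le (x,y)_w+3\gd.
\end{align*}
Hence $(x,y)_w\ge (x,z)_w-3\gd\ge (x,z)_w\bmin(y,z)_w-3\gd$, which is \eqref{4gda} with $3\gd$ in place of $\gd$; so $X$ is $3\gd$-hyperbolic.

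Once the point $m$ is at hand, every step is a routine triangle-inequality manipulation; the one substantive idea is that a \emph{single} balanced point $m$ on the geodesic --- the point where $(x,w)_m=(y,w)_m$, which consequently also satisfies $d(m,w)\approx (x,y)_w$ --- can be fed to both applications of the hypothesis. I expect the only places needing a little care to be the symmetry reduction and checking that $m$ genuinely lies on a geodesic joining $x$ and $y$, so that the hypothesis applies with $m$ in the position of its distinguished point.
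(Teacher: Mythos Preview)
Your argument is correct. The paper itself gives no proof but only cites \cite{Burago} and \cite{Vaisala}; what you have written is essentially the standard argument found in those references: locate a point $m$ on a geodesic from $x$ to $y$ that is close to $w$ (you do this elegantly via the intermediate value theorem, choosing $m$ so that $(x,w)_m=(y,w)_m$, whence the hypothesis forces $d(m,w)\le(x,y)_w+2\gd$), and then apply the hypothesis a second time at $m$ with third point $z$ and use the triangle inequality to transfer the bound from $m$ back to $w$, picking up one more $\gd$. Your symmetry reduction in the case $(y,z)_m\le\gd$ is fine, since the same computation with $x$ and $y$ swapped gives $(y,z)_w\le(x,y)_w+3\gd$, which is equally good for the conclusion.
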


\begin{proof}
  See \cite[p.~286 and Exercise 8.4.5]{Burago}
or \cite[Proof of Theorem 2.34]{Vaisala}.
\end{proof}

\begin{remark}
  In particular, using \refT{T4G},
a geodesic metric space is a real tree if and only if \eqref{lh4} holds with
$\gd=0$.
In fact, it is easily seen that this condition implies that geodesics are
unique, so \ref{T1} holds. (Given two geodesics from $x$ to $y$, let $z$ and
$w$ be points on them with $d(z,x)=d(w,x)$, and conclude $z=w$ from
\eqref{lh4}.) Then,  by \refL{LH3} again, \eqref{lh4} with $\gd=0$ 
when $w\in[x,y]$
is
equivalent to $[x,y]\subseteq[x,z]\cup[y,z]$, which is \ref{T2Y}. 
\end{remark}

The condition in \refL{LH4} can be regarded as a condition on the geometry
of the triangle $xyz$. One can pursue this point of view further.

\begin{definition}\label{D3}
  Let $(X,d)$ be a geodesic metric space.
A \emph{triangle} $xyz$
in $X$ is a
  set of three \emph{vertices}
$x,y,z\in X$ together with three \emph{sides} $xy$, $xz$ and $yz$ that are
geodesics between the pairs of vertices.
\end{definition}

Note that in general, the sides of a triangle are not uniquely determined by
the vertices.

\begin{definition}\label{Dslim}
  Let $(X,d)$ be a geodesic metric space.
  A triangle is \emph{$\gd$-slim} if each side  is contained in the closed
  $\gd$-neighbourhood of the union of the two other sides,
\ie, if every $w\in xy$ has distance $\le\gd$ to $xz\cup yz$, and similarly
for the two other sides.
\end{definition}

\citet{Gromov} used a definition that is similar, but somewhat more technical.
Note that in a triangle with vertices $x,y,z$, we have, by \eqref{ak4},
$(y,z)_x+(x,z)_y=\dxy$, and thus the side $xy$ can be divided into two parts
of lengths $(y,z)_x$ and $(x,z)_y$, containing $x$ and $y$ respectively; we
call these, the \emph{$x$-part} and \emph{$y$-part} of $xy$.
(In a tree, these are the parts of $[x,y]$ before and after $\gam(x,y,z)$.
The definition in \cite{Gromov} is stated in terms of isometric mappings of
the three sides into a real tree with three endpoints having the same
distances between each other as $x,y,z$.)

\begin{definition}\label{Dthin}
  Let $(X,d)$ be a geodesic metric space.
  A triangle $xyz$ is \emph{$\gd$-thin} if 
for each $w$ in the $x$-part of the side $xy$, the point $v\in xz$ with
$d(v,x)=d(w,x)$ satisfies $d(v,w)\le\gd$,
and similarly for the $y$-part and for the two other sides.
\end{definition}

It is immediate that a $\gd$-thin triangle is $\gd$-slim.

\begin{lemma}\label{Lthin}
Let $X$ be a geodesic metric space.
\begin{romenumerate}
\item \label{Lthina}
If $X$ is $\gd$-hyperbolic, then every triangle in $X$ is $2\gd$-thin.  
\item \label{Lthinb}
If every triangle in $X$ is $\gd$-thin, then $X$ is $2\gd$-hyperbolic.
\end{romenumerate}
\end{lemma}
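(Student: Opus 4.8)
The plan is to work throughout with Gromov products and the hyperbolicity inequality~\eqref{4gda}, using two elementary facts valid in any metric space: (a)~each Gromov product is $1$-Lipschitz in its base point; (b)~if $w$ lies on a geodesic joining $x$ and $y$ with $d(x,w)=t$, then $d(w,y)=d(x,y)-t$, so that $(x,y)_w=0$ and $(w,y)_x=t$, and if moreover $v$ lies on a geodesic issuing from $x$ with $d(x,v)=t$, then $(w,v)_x=t-\tfrac12 d(w,v)$. The substance of each part is the translation of \refD{Dthin} into these terms.

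For part~\ref{Lthina} it suffices to treat one of the (mutually analogous) cases in \refD{Dthin}: fix a geodesic triangle $xyz$, a point $w$ in the $x$-part of the side $xy$ (so $t:=d(x,w)\le(y,z)_x$) and the point $v$ of the side $xz$ with $d(x,v)=t$, and bound $d(w,v)$. Since $(x,y)_w=0$, applying \eqref{4gda} with base point $w$ gives $(x,z)_w\bmin(y,z)_w\le\gd$; a one-line computation shows $(x,z)_w-(y,z)_w=t-(y,z)_x\le0$, so in fact $(x,z)_w\le\gd$, which rearranges first to $d(w,z)\le d(x,z)-t+2\gd$ and then to $(w,z)_x\ge t-\gd$. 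As $(v,z)_x=t$, applying \eqref{4gda} with base point $x$ to the triple $w,v,z$ gives $(w,v)_x\ge(w,z)_x\bmin(v,z)_x-\gd\ge t-2\gd$, hence $d(w,v)=2t-2(w,v)_x\le 4\gd$. Reaching the sharp constant $2\gd$ of the statement requires keeping the two uses of \eqref{4gda} coupled rather than simply adding the two error terms.

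For part~\ref{Lthinb}, assume every geodesic triangle is $\gd$-thin and fix $x,y,z,w$; I would in fact prove $(x,y)_w\ge(x,z)_w\bmin(y,z)_w-\gd$, which gives $2\gd$-hyperbolicity. Set $s:=(x,z)_w\bmin(y,z)_w$; if $s=0$ there is nothing to prove, so assume $s>0$, fix \emph{one} geodesic $\gs$ from $w$ to $z$, and form the triangle $wxz$ and the triangle $wyz$ using this \emph{same} $\gs$ as the $wz$-side of both. In the triangle $wxz$ the $w$-part of the side $wx$ has length $(x,z)_w\ge s$, so the point $q$ of that side with $d(w,q)=s$ lies in the $w$-part, and $\gd$-thinness gives $d(q,\gs(s))\le\gd$; likewise in $wyz$ the point $r$ of the side $wy$ with $d(w,r)=s$ satisfies $d(r,\gs(s))\le\gd$, whence $d(q,r)\le2\gd$. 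Since $d(w,q)=s$ and $q$ lies on a geodesic from $w$ to $x$, we have $d(q,x)=d(w,x)-s$, and similarly $d(r,y)=d(w,y)-s$; therefore $d(x,y)\le d(x,q)+d(q,r)+d(r,y)\le d(w,x)+d(w,y)-2s+2\gd$, which is precisely $(x,y)_w\ge s-\gd$.

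The main obstacle is the constant in part~\ref{Lthina}: the direct Gromov-product chain loses a factor $\gd$ twice, and obtaining exactly $2\gd$ needs the careful bookkeeping just indicated. Part~\ref{Lthinb} is essentially routine once one hits on the device of forcing the two auxiliary triangles to share their $wz$-side, so that the points $\gs(s)$ compared with $q$ and with $r$ coincide. Besides this, I would dispose of the degenerate cases (two of the four points equal, $s=0$, or $t=0$) at the outset and check that the auxiliary points are well defined, \ie{} that $s\le d(w,x),d(w,y),d(w,z)$ and $t\le d(x,z)$, all immediate from the triangle inequality.
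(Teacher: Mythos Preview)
The paper itself does not prove this lemma; it simply cites \cite[Proposition~6.3.C]{Gromov}. So there is no ``paper's approach'' to compare against beyond that reference, and your proposal is an independent attempt at a direct proof.

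Your argument for part~\ref{Lthinb} is correct and in fact sharper than the stated conclusion: the chain $d(x,y)\le d(x,q)+d(q,r)+d(r,y)\le d(w,x)+d(w,y)-2s+2\gd$ yields $(x,y)_w\ge s-\gd$, which is $\gd$-hyperbolicity, not merely $2\gd$-hyperbolicity. (Your parenthetical ``which gives $2\gd$-hyperbolicity'' undersells what you have actually shown.) The key device of forcing the two auxiliary triangles $wxz$ and $wyz$ to share the side $\gs$ is exactly right, and the checks that $s\le d(w,x),d(w,y),d(w,z)$ are indeed immediate from the triangle inequality as you say.

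Part~\ref{Lthina}, however, has a genuine gap. Your computation is correct as far as it goes: from $(x,z)_w\le\gd$ you get $(w,z)_x\ge t-\gd$, and then a second application of \eqref{4gda} at base $x$ gives $(w,v)_x\ge t-2\gd$, hence $d(w,v)\le 4\gd$. You acknowledge that this misses the stated constant by a factor of $2$ and write that reaching $2\gd$ ``requires keeping the two uses of \eqref{4gda} coupled rather than simply adding the two error terms'', but you do not carry this out, and it is not clear from what you have written how to do so. The naive four-point approaches (e.g.\ applying \eqref{4pointgd} to $\{x,w,v,z\}$ and to $\{x,w,y,z\}$ and combining) run into the same $4\gd$ barrier. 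So as it stands you have proved $4\gd$-thin, not $2\gd$-thin; to match the lemma as stated you would need either to supply the sharper argument explicitly or, as the paper does, to defer to Gromov's original proof.
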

\begin{proof}
  See \cite[Proposition 6.3.C]{Gromov}.
\end{proof}

\begin{lemma}\label{Lslim}
Let $X$ be a geodesic metric space.
\begin{romenumerate}
\item \label{Lslima}
If $X$ is $\gd$-hyperbolic, then every triangle in $X$ is $2\gd$-slim.  
\item \label{Lslimb}
If every triangle in $X$ is $\gd$-slim, then $X$ is $3\gd$-hyperbolic.
\end{romenumerate}
\end{lemma}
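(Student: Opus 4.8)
The plan is to prove \ref{Lslima} in one line and to reduce \ref{Lslimb} to \refL{LH4}. For \ref{Lslima}: if $X$ is $\gd$-hyperbolic, then \refL{Lthin}\ref{Lthina} gives that every triangle in $X$ is $2\gd$-thin, and, as observed immediately before \refL{Lthin}, a $2\gd$-thin triangle is $2\gd$-slim; nothing more is needed.

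For \ref{Lslimb} I would argue as follows. By \refL{LH4}, it suffices to check that $(x,z)_w\bmin(y,z)_w\le\gd$ for every $x,y,z\in X$ and every point $w$ lying on a geodesic joining $x$ and $y$, for then $X$ is automatically $3\gd$-hyperbolic. So fix such $x,y,z$, fix a geodesic $xy$ through $w$, and choose arbitrary geodesics $xz$ and $yz$; together these form a triangle $xyz$, which by hypothesis is $\gd$-slim. Applying slimness to the point $w$ on the side $xy$ produces a point $v\in xz\cup yz$ with $d(w,v)\le\gd$ (the infimum defining the distance from $w$ to $xz\cup yz$ is attained, since the two other sides are compact, being isometric images of closed bounded intervals).

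It then remains to convert $d(w,v)\le\gd$ into a Gromov-product estimate. The two ingredients I would use are: first, $v$ lies on a geodesic from $x$ to $z$ (resp.\ from $y$ to $z$), so $(x,z)_v=0$ (resp.\ $(y,z)_v=0$) by \refL{LH3}; second, the map $p\mapsto(x,z)_p$ is $1$-Lipschitz, which is a one-line consequence of the definition \eqref{ak4} and the triangle inequality. Combining them, $(x,z)_w\le(x,z)_v+d(v,w)=d(v,w)\le\gd$ when $v\in xz$, and symmetrically $(y,z)_w\le\gd$ when $v\in yz$; in either case $(x,z)_w\bmin(y,z)_w\le\gd$, which is exactly the hypothesis of \refL{LH4}. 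The argument is essentially routine given \refLs{LH3}, \ref{LH4} and \ref{Lthin}; the only points that need a little care are that the choice of the other two sides of the triangle is immaterial (they play no role in the estimate) and that the distance $d(w,xz\cup yz)$ is genuinely attained.
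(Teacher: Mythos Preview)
Your proof is correct and follows the paper's line. For \ref{Lslima} you argue exactly as the paper does, via \refL{Lthin}\ref{Lthina} together with the observation that thin implies slim. For \ref{Lslimb} the paper merely cites \cite[Theorem~2.34]{Vaisala}; your explicit reduction through \refL{LH4} (whose proof in the paper is itself deferred to the same reference) unpacks that citation and is the natural route.
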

\begin{proof}
\ref{Lslima} follows from \refL{Lthin}\ref{Lthina}; see also 
 \cite[Theorems 2.35]{Vaisala}
(with $3\gd$).

\ref{Lslimb} is
 \cite[Theorems 2.34  (with $h=0$)]{Vaisala}.
\end{proof}

\begin{remark}\label{Rauk}
  If every triangle in $X$ is $\gd$-slim (or $\gd$-thin), then
any two geodesics with the same endpoints are within distance $2\gd$ of
each other. (I.e., their Hausdorff distance is $\le2\gd$.)
This follows by considering 
the triangle obtained by subdividing one of the geodesics at an arbitrary
interior point. (Or, more bravely, by considering the geodesics as two sides of
a degenerate triangle with two vertices coinciding.)
\end{remark}

\begin{remark}\label{Ris}
  In particular, using \refT{T4G}, a geodesic metric space is a real tree if
  and only if every triangle is 0-slim (or 0-thin).

In fact, if this holds, then by \refR{Rauk}, geodesics are unique, and thus
\ref{T1} holds. Then, a triangle $xyz $ is $0$-slim if and only if \ref{T2Y}
holds for all permutations of $x,y,z$.
Conversely, each triangle in a real tree is $0$-thin as a consequence of
\refLs{LgD} and \ref{Lddd}.
\end{remark}


\newcommand\AAP{\emph{Adv. Appl. Probab.} }
\newcommand\JAP{\emph{J. Appl. Probab.} }
\newcommand\JAMS{\emph{J. \AMS} }
\newcommand\MAMS{\emph{Memoirs \AMS} }
\newcommand\PAMS{\emph{Proc. \AMS} }
\newcommand\TAMS{\emph{Trans. \AMS} }
\newcommand\AnnMS{\emph{Ann. Math. Statist.} }
\newcommand\AnnPr{\emph{Ann. Probab.} }
\newcommand\CPC{\emph{Combin. Probab. Comput.} }
\newcommand\JMAA{\emph{J. Math. Anal. Appl.} }
\newcommand\RSA{\emph{Random Structures Algorithms} }
\newcommand\DMTCS{\jour{Discr. Math. Theor. Comput. Sci.} }

\newcommand\AMS{Amer. Math. Soc.}
\newcommand\Springer{Springer-Verlag}
\newcommand\Wiley{Wiley}

\newcommand\vol{\textbf}
\newcommand\jour{\emph}
\newcommand\book{\emph}
\newcommand\inbook{\emph}
\def\no#1#2,{\unskip#2, no. #1,} 
\newcommand\toappear{\unskip, to appear}

\newcommand\arxiv[1]{\texttt{arXiv:#1}}
\newcommand\arXiv{\arxiv}

\def\nobibitem#1\par{}

\end{document}